\newtheorem{theorem}{Theorem}[section]
\newtheorem{lemma}[theorem]{Lemma}
\theoremstyle{definition}
\newtheorem{question}[theorem]{Question}
\newtheorem{definition}[theorem]{Definition}
\newtheorem{example}[theorem]{Example}
\newtheorem{remark}[theorem]{Remark}
\newtheorem{notation}[theorem]{Notation}
\def\x{\xi}
\def\p{\Phi}
\def\Z{\mathcal{Z}}
\def\U{L}
\def\au{l}
\def\S{\mathscr{S}}
\def\Z{\mathscr{Z}}
\def\nv{P_{S}}
\def\pro{\vartriangleleft_{x}}
\def\proy{\vartriangleleft_{y}}
\def\ppro{\blacktriangleleft_{x}}
\def\pproy{\blacktriangleleft_{y}}
\def\O{B}
\def\p{\phi^{n}}
\def\cut{28}
\newcommand{\tpmod}[1]{{\@displayfalse\pmod{#1}}}
\newcommand{\Rmnum}[1]{\expandafter\@slowromancap\romannumeral #1@}
\begin{document}
\title {\textbf{Distances and intersections of curves}}
\author{Yohsuke Watanabe}
%\author{Yohsuke Watanabe\thanks{The author was partially supported from  U.S. National Science Foundation grants DMS 1107452, 1107263, 1107367 ``RNMS: GEometric Structures and Representation Varieties'' (the GEAR Network).}}
%\author{Yohsuke Watanabe \\\\\multicolumn{1}{p{1.0\textwidth}}{\centering\emph{Department of Mathematics, University of Utah\\ Salt Lake City, Utah 84112, U.S.A. \\e-mail : ywatanab@math.utah.edu}} }
%\date{}
\maketitle

\begin{abstract}
We obtain a coarse relationship between geometric intersection numbers of curves and the sum of their subsurface projection distances with explicit quasi-constants. By using this relationship, we give applications in the studies of the curve graphs and the mapping class groups.
\end{abstract}
%\tableofcontents

%\textbf{Mathematics Subject Classifications (2010).} 57M99

%\textbf{Key words.} Curve complex, tight geodesics

\section{Introduction}
We let $S_{g,b}$ denote a genus $g$ compact surface with $b$ boundary components. We work on $S_{g,b}$ such that $\xi(S_{g,b})=3g+b-3>0$. We let $S$ denote a such surface when its topological type is not emphasized. Whenever we work with curves, arcs, and subsurfaces on $S$, we assume they and their boundary components are at the geodesic representatives with a hyperbolic structure which we sometimes specify. The curve graph $C(S)$ is a $1$-dimensional CW complex whose vertices are simple closed curves of $S$ and whose edges are realized between vertices if the corresponding curves in $S$ realize the minimal possible geometric intersection number in $S$. With the graph metric, $C(S)$ is a path-connected and infinite diameter space. Let $x,y\in C(S)$. We let $i(x,y)$ and $d_{S}(x,y)$ respectively denote the geometric intersection number of $x$ and $y$ in $S$ and the distance between $x$ and $y$ in $C(S)$. Those are fundamental measures associated to curves on surfaces, and they are the central objects in this paper. A classical relationship between them, that are due to Lickorish \cite{LIC} and Hempel \cite{HEM}, says that if $x,y\in C(S)$ then $$d_{S}(x,y)\leq 2 \log_{2} i(x,y)+2.$$ However, as one easily sees, this inequality is not sharp:  
\begin{example}\label{gap}
Let $x,a\in C(S)$ such that they intersect and are contained in a proper subsurface $Z$ of $S$. Let $T_{a}$ denote Dehn twist along $a$. Hempel--Lickorish inequality clearly holds for a pair, $x$ and $T_{a}^{n}(x)$, as $d_{S}(x, T_{a}^{n}(x))=2$ and $i(x, T_{a}^{n}(x))=|n| \cdot i(a,x)^{2}.$
However, the gap in the inequality is large if $|n|$ is large. 
Their subsurface projections distances are small in the subsurfaces that are not contained in $Z$, but their annular subsurface projection distance along $a$ grows linearly with $|n|$, so by adding it to the distance-side of Hempel--Lickorish inequality, some gap gets filled up, but if $i(a,x)$ is large, the annular subsurface projection distance is not enough. Nevertheless, it is not straightforward to detect other large subsurface projections in $Z$ except for the annulus. 
There could be enough many large subsurface projections between them so that they fill up the gap completely, or there could be too many large subsurface projections between them so that the distance-side exceeds the intersection-side in Hempel--Lickorish inequality. \end{example}

In the above light, the following theorem due to Choi--Rafi is powerful.

\begin{theorem}[\cite{CR}]\label{choirafi} 
There exists $k$ such that for any markings $\mu_{1}$ and $\mu_{2}$ on $S$, $$\log i(\mu_{1},\mu_{2})\asymp \sum_{Z\subseteq S}[ d_{Z}(\mu_{1},\mu_{2})]_{k}+\sum_{A\subseteq S} \log [d_{A}(\mu_{1},\mu_{2})]_{k}$$ where $A\asymp B$ if there exists $K^{\pm}$ and $C^{\pm}$ such that $\frac{B}{K^{-}}-C^{-}\leq A\leq K^{+}\cdot B+C^{+}$, where $[m]_{n}=m$ if $m>n$ and $[ m]_{n}=0$ if $m\leq n$, and where the sum is taken over all $Z$ which are not annuli and $A$ which are annuli.
\end{theorem}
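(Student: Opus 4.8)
The plan is to prove the two coarse inequalities separately, and to keep the annular and non-annular subsurfaces asymmetric from the very start, since the logarithm on the annular terms is forced by a growth dichotomy. In a non-annular subsurface, intersection number grows \emph{exponentially} with projection distance (this is the content of the Hempel--Lickorish bound, read inside the subsurface), so a non-annular $Z$ should contribute its projection $[d_Z(\mu_1,\mu_2)]_k$ directly to $\log i$. In an annulus $A$, by contrast, twisting $n$ times grows the intersection with the core only \emph{linearly} while it grows $d_A(\mu_1,\mu_2)$ linearly as well, so an annulus should contribute $\log[d_A(\mu_1,\mu_2)]_k$ to $\log i$. Throughout I will use the elementary estimate that subsurface projection cannot increase intersection: $i_Z(\pi_Z\mu_1,\pi_Z\mu_2)\lesssim i(\mu_1,\mu_2)$ for every essential $Z$, since intersections of the projected arcs inside $Z$ arise from intersections of $\mu_1,\mu_2$ in $S$.

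For the upper bound $\log i(\mu_1,\mu_2)\lesssim \sum_{Z}[d_Z]_k+\sum_{A}\log[d_A]_k$, I would pass through the marking graph. The Masur--Minsky distance formula identifies the marking distance $d_M(\mu_1,\mu_2)$ with the thresholded sum $\sum_{\text{all }Z}[d_Z(\mu_1,\mu_2)]_k$ taken over \emph{all} essential subsurfaces, annuli included with their full value $[d_A]_k$. Each elementary move of the marking graph multiplies $i(\cdot,\mu_1)$ by a uniformly bounded factor, because in a clean complete marking a base curve and its transversal meet in a bounded number of points, so $i(T_a\mu,\nu)\leq i(\mu,\nu)+i(a,\mu)\,i(a,\nu)\lesssim i(\mu,\nu)$. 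A naive count then gives $\log i\lesssim d_M\lesssim \sum_{Z}[d_Z]_k+\sum_A[d_A]_k$, which overcounts the annular part. The refinement is to group the moves that twist about a fixed core $a$: there are $\asymp [d_A]_k$ of them, and collecting them into a single power $T_a^{[d_A]_k}$ multiplies $i$ by only $O([d_A]_k)$, hence contributes $\log[d_A]_k$ rather than $[d_A]_k$. The remaining flips and base changes, numbering $\asymp \sum_{Z\ \text{non-annular}}[d_Z]_k$, each cost a bounded factor, yielding the claimed bound. (Alternatively one can work flat-geometrically: take the quadratic differential with horizontal and vertical foliations $\mu_1,\mu_2$, so that $\log i(\mu_1,\mu_2)=\log\operatorname{Area}$, and apply Rafi's thick--thin decomposition, in which thick pieces are governed by non-annular projections and thin annuli contribute their log-modulus.)

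For the lower bound $\sum_{Z}[d_Z]_k+\sum_{A}\log[d_A]_k\lesssim \log i(\mu_1,\mu_2)$, the guiding model is the once-punctured torus, where $\mu_2$ is obtained from $\mu_1$ by a continued fraction $[a_1,\dots,a_m]$, the $a_i$ are exactly the annular twisting numbers $d_A$, and $i(\mu_1,\mu_2)$ is comparable to the \emph{product} $\prod_i(a_i+1)$; taking logarithms gives $\log i\asymp\sum_i\log(a_i+1)$. In general I would run the analogue through a Masur--Minsky hierarchy (or the flat structure) between $\mu_1$ and $\mu_2$: resolving the hierarchy realizes $\mu_2$ from $\mu_1$ by a nested, iterated sequence of moves in subsurfaces, and the intersection number compounds multiplicatively across the nesting. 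Each non-annular $Z$ with $d_Z$ large forces, via Hempel--Lickorish inside $Z$ together with $i_Z(\pi_Z\mu_1,\pi_Z\mu_2)\lesssim i(\mu_1,\mu_2)$, a factor of at least $2^{c\,d_Z}$; each annulus $A$ forces a factor of at least $c\,d_A$. Taking the logarithm converts the product into the required sum, and the exponential-versus-linear dichotomy reproduces precisely the $[d_Z]_k$ versus $\log[d_A]_k$ asymmetry.

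The main obstacle is the lower bound, and specifically the passage from multiplicativity of $i$ to additivity of $\log i$: one must show that the per-subsurface intersections genuinely multiply and are not shared or double counted, i.e.\ that there is no interference canceling the contributions of distinct active subsurfaces. This is where the uniform threshold $k$ and the Behrstock inequality, together with the bounded geodesic image theorem, do the work, guaranteeing that at any given scale only boundedly many subsurfaces are active, so the sum is coarsely well defined and the nesting really does compound. Controlling the reordering needed to collect same-annulus twists in the upper bound, and tracking all error terms so that the quasi-constants $K^{\pm},C^{\pm}$ come out explicit and independent of $\mu_1,\mu_2$, is the remaining bookkeeping.
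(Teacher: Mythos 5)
Your proposal splits into an upper bound and a lower bound, and the upper-bound half is essentially sound: the marking-graph count with twist moves collected into blocks $T_a^{n}$ (so a block of $n\asymp[d_A]_k$ twists costs a multiplicative factor $O(n)$ rather than $C^{n}$) is the standard combinatorial route to $\log i\lesssim\sum_Z[d_Z]_k+\sum_A\log[d_A]_k$, and the reordering you need is supplied by the efficiency of hierarchy resolutions. The genuine gap is in the lower bound, at exactly the step you flag but do not fill: the assertion that ``the per-subsurface intersections genuinely multiply.'' For a \emph{single} subsurface $Z$ your mechanism works — Hempel--Lickorish inside $Z$ together with $\sum_{a,b} i_Z(a,b)\le i(\mu_1,\mu_2)$ gives $[d_Z]_k\lesssim\log i$ (this is precisely Lemma \ref{annulus} and Lemma \ref{base} of the present paper) — but summing over \emph{all} active subsurfaces requires showing that distinct domains account for multiplicatively independent portions of $i(\mu_1,\mu_2)$, and your appeal to Behrstock plus bounded geodesic image is not a correct statement in the needed form. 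The number of subsurfaces above threshold is unbounded (it grows with distance), and what Behrstock/BGIT give is a \emph{time-ordering} of active domains along a hierarchy, not any independence of intersection points; two overlapping annuli can both be active with the same intersection points of $\mu_1$ and $\mu_2$ witnessing both projections, and ruling out that double counting is the entire difficulty. Indeed, the present paper exists because this step needs a mechanism: Behrstock's inequality (Theorem \ref{BI}) is used to build a chain $A_1\vartriangleleft_x A_2\vartriangleleft_x\cdots$ in which each successive domain's contribution is absorbed into an intersection term $i(x,A_j)$ rather than charged directly to $i(x,y)$ (Lemma \ref{replaceannulus}, Theorem \ref{overlappingannulus}), and for $\xi(S)>1$ one must first partition the active domains into boundedly many pairwise-overlapping families via the chromatic number of $C(S)$ (Lemma \ref{partition}), since Behrstock applies only to overlapping pairs. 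Nothing in your sketch supplies this, and a naive ``nesting compounds multiplicatively'' argument fails precisely for overlapping, non-nested domains, as Remark \ref{proofbybgit} points out.

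It is also worth noting that the paper's own proof of the quoted theorem (following Choi--Rafi) takes neither of your two main routes: it passes through Teich\-m\"uller geometry, choosing thick points $\sigma_1,\sigma_2$ on which $\mu_1,\mu_2$ are short, proving $d_{\mathcal T}(\sigma_1,\sigma_2)$ is coarsely $\log i(\mu_1,\mu_2)$, and then invoking Rafi's combinatorial formula for the Teich\-m\"uller distance between thick points. Your parenthetical alternative — the quadratic differential with vertical and horizontal foliations $\mu_1,\mu_2$, $\log i$ as log-area, and Rafi's thick--thin decomposition — is in fact a sketch of that actual argument, and developing it would avoid the multiplicativity gap entirely; as written, though, the combinatorial lower bound is the part of your proposal that does not go through.
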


Theorem \ref{choirafi} holds for curves, for instance see \cite{W4}. (Conversely, if one knows for curves then it implies for markings, which is obvious.) 
Therefore, $\log$ of geometric intersection number of two curves and the sum of their large subsurface projection distances are coarsely equal. 
The rough idea of the proof of Theorem \ref{choirafi} is as follows: they take $\sigma_{1}$ and $\sigma_{2}$ from the thick part of Teichum\"uller space so that $\mu_{1}$ and $\mu_{2}$ are respectively short on $\sigma_{1}$ and $\sigma_{2}$. They show that the Teichum\"uller distance between $\mu_{1}$ and $\mu_{2}$ and $\log i(\sigma_{1},\sigma_{2})$ are coarsely equal. Then they apply a result of Rafi \cite{RAF}, which says that the Teichum\"uller distance between two points in the thick part of Teichum\"uller space and the sum of large subsurface projection distances between two corresponding short markings are coarsely equal.

Even though Theorem \ref{choirafi} gives fulfillment to the gap in Hempel--Lickorish inequality, simple questions still remain:
\begin{question}\label{compute}
What is a cut-off constant and what are additive/multiplicative constants exactly? How do they depend on the topology of underlying surfaces?
\end{question}

\emph{For the rest of this paper, we fix the base of $\log$ to be $2$ and treat $\log 0$ as $0$.}

A partial solution to Question \ref{compute} was given:
\begin{theorem}[\cite{W3}]\label{igd}
Let $x,y\in C(S)$. For all $k> 0$, we have $$\log i(x,y)\leq V_{S,k} \cdot \bigg( \sum_{Z\subseteq S}[ d_{Z}(x,y)]_{k}+\sum_{A\subseteq S} \log [d_{A}(x,y)]_{k}\bigg) + V_{S,k}$$ where $V_{S,k}=\big( M^{2}|\chi(S)| \cdot (k+\x(S)\cdot M)  \big)^{\x(S)+2}$ with $M=200.$ 
\end{theorem}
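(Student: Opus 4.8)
The plan is to induct on the complexity $\xi(S)$, the exponent $\xi(S)+2$ in $V_{S,k}$ reflecting one multiplicative factor $M^{2}|\chi(S)|\cdot(k+\xi(S)\cdot M)$ gained per unit drop in complexity, starting from a base exponent of three. For the base case $\xi(S)=1$ the curve graph is the Farey graph and there are no proper non-annular subsurfaces, so only the terms $[d_{S}(x,y)]_{k}$ and $\sum_{A}\log[d_{A}(x,y)]_{k}$ are present; here I would use the continued-fraction expansion of the slope of $y$ relative to $x$, in which the number of partial quotients is comparable to the Farey distance $d_{S}(x,y)$ and each partial quotient exceeding $k$ is exactly a large annular projection, to obtain $\log i(x,y)\le d_{S}(x,y)+\sum_{A}\log[d_{A}(x,y)]_{k}$ up to an explicit constant, and then check directly that this constant is dominated by $V_{S,k}$ evaluated at $\xi(S)=1$.

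For the inductive step I would place $x$ and $y$ in minimal position and cut $S$ along $x$, obtaining a surface $S_{x}$ with $\xi(S_{x})<\xi(S)$ in which $y$ becomes a disjoint union of exactly $i(x,y)$ essential arcs. Partitioning these arcs into isotopy classes, the number $r$ of distinct classes of pairwise disjoint essential arcs is bounded by a topological constant of order $|\chi(S)|$, so that $\log i(x,y)\le \log r+\max_{j}\log n_{j}$, where $n_{j}$ counts the arcs in the $j$-th parallel band. The first term feeds only into the additive quasi-constant, and the whole theorem reduces to bounding the largest band-width $\max_{j}\log n_{j}$ by the projection data. Here I would use that every subsurface of $S_{x}$ is a subsurface of $S$ and that a projection computed in $S_{x}$ agrees with the one computed in $S$ up to a uniformly bounded error, so that any inductive bound obtained inside $S_{x}$ can be re-summed over $S$.

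The crux is the band-width estimate, and the essential subtlety is that a single large band can arise from three genuinely different sources, each of which must be charged to a distinct family of terms on the right-hand side: a band can be forced by large relative twisting of $y$ against $x$, contributing annular terms $\log[d_{A}(x,y)]_{k}$; by complicated behavior confined to a proper subsurface $W\subsetneq S$ meeting the band, to which the inductive hypothesis applies; or by global filling with no proper witness at all, as in the case $y=\phi^{n}(x)$ for a pseudo-Anosov $\phi$, where all proper and annular projections stay bounded yet $i(x,y)$ grows with $n\asymp d_{S}(x,y)$, so that this contribution must be charged to the top term $[d_{S}(x,y)]_{k}$. I would organize the estimate along a geodesic from $x$ to $y$, controlling how the band-width is amplified as one passes along the geodesic: the amplification is by a bounded factor per step except where a large subsurface projection is encountered, and telescoping these factors produces a term linear in $d_{S}(x,y)$ together with the annular and proper-subsurface contributions.

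I expect the main obstacle to be this band-width estimate with \emph{explicit} constants. The naive per-step amplification is not boundable, since intersection with $x$ can jump sharply across a single geodesic step precisely when some subsurface projection is large; the projections themselves must therefore serve as the accounting device, with each jump attributed to a specific annulus, proper subsurface, or to the top-level geodesic, and with no configuration charged twice. Keeping the accumulated constant within the prescribed per-level factor $M^{2}|\chi(S)|\cdot(k+\xi(S)\cdot M)$, controlling the discrepancy between projections measured in $S$ and in $S_{x}$ (in particular handling that the twisting of $y$ about $x$ itself is not an admissible term, so a disjoint reference curve must be inserted), and ensuring that projections lying individually below the cutoff $k$ cannot accumulate across the boundedly many subsurfaces, are where the quantitative work concentrates; the last point is what forces the dependence of $V_{S,k}$ on $k$ through the factor $(k+\xi(S)\cdot M)$.
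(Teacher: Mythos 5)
The paper does not actually prove Theorem \ref{igd} here --- it is quoted from \cite{W3} --- but Remark \ref{proofbybgit} records precisely what the proof there rests on: the effective Bounded Geodesic Image Theorem (Theorem \ref{BGIT}, Webb's constant $\O\leq 100$) applied along \emph{tight} geodesics. That is exactly the ingredient missing from your proposal, and its absence is a genuine gap, not a presentational one. Your entire scheme funnels into the ``band-width estimate,'' which you propose to prove by walking along a geodesic from $x$ to $y$ and asserting that the width is ``amplified by a bounded factor per step except where a large subsurface projection is encountered.'' For an arbitrary geodesic in $C(S)$, consecutive vertices are merely disjoint curves, and disjointness gives no control whatsoever on intersection numbers: $i(v_{i+1},x)$ can be arbitrarily large compared with $i(v_{i},x)$ even when no projection you have named is large at that step. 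You concede this (``the naive per-step amplification is not boundable'') and then propose to use the projections ``as the accounting device, with each jump attributed to a specific annulus, proper subsurface, or to the top-level geodesic, and with no configuration charged twice'' --- but that attribution \emph{is} the theorem-shaped hole. What makes it work in \cite{W3} is twofold: tightness of the geodesic gives per-step intersection control in the first place (e.g.\ $v_{i}$ lies in the boundary of a regular neighborhood of $v_{i-1}\cup v_{i+1}$, which is why this paper's later applications also insist on tight geodesics), and Theorem \ref{BGIT} says that any $W$ with $d_{W}(x,y)>\O$ must have some geodesic vertex projecting trivially to it, hence $\partial W$ lies within distance $1$ of the geodesic --- this localizes every large subsurface to a definite position along the geodesic and is what permits a charge-once bookkeeping and an induction on $\xi(S)$ with explicit constants. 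Without an effective $\O$ you could not hope for the explicit $M=200$ in $V_{S,k}$ at all.

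Two subsidiary points. Your base case via continued fractions is sound in spirit ($\xi(S)=1$ is the Farey graph, partial quotients are annular twists, and quotients below the cutoff contribute $\log(k+1)$ each, consistently with $V_{S,k}$ depending on $k$); indeed the present paper reproves the $\xi(S)=1$ relationship by a different route in Theorem \ref{mainone}. Less sound is your closing claim that sub-threshold projections ``cannot accumulate across the boundedly many subsurfaces'': the number of subsurfaces carrying a nontrivial projection between $x$ and $y$ is not bounded independently of $x$ and $y$ --- it grows with $d_{S}(x,y)$ --- so ``boundedly many'' is unjustified as stated. What is true, and again requires the geodesic-plus-BGIT framework, is that sub-threshold contributions are controlled per unit of geodesic length at each level of the induction, with a top-level distance below $k$ absorbed into the additive constant $V_{S,k}$. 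As written, that step, like the band-width estimate itself, is a statement of the difficulty rather than a proof of it.
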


In this paper, we are interested in the other direction, which then completes a solution to Question \ref{compute}. We think that promoting a coarse relation to an effective one is important: Theorem \ref{igd} is used to give an algorithm to compute the distance between two points in the curve graph \cite{W3}. The algorithm is computable because cut-off and additive/multiplicative constants are computable. Furthermore, Theorem \ref{igd} is used in a work of Aougab--Biringer--Gaster \cite{ABG} where they give a computable algorithm to determine if a given finite graph is an induced subgraph of a given curve graph. Again, the effectiveness of Theorem \ref{igd} plays an important role therein.

\subsection{Results}\label{results}
We show

\begin{theorem}\label{mainone}
Suppose $\x(S)=1$. Let $x,y\in C(S)$. For all $k\geq 18$, we have $$ \frac{ \sum_{Z\subseteq S}[ d_{Z}(x,y)]_{k}+\sum_{A\subseteq S} \log [d_{A}(x,y)]_{k}}{2\au_{\lceil \frac{k+1}{2} \rceil}+1}  -\frac{1}{2\au_{\lceil \frac{k+1}{2} \rceil}+1} \leq \log i(x,y)$$
where $\au_{\lceil \frac{k+1}{2} \rceil}= \frac{\log ( \lceil \frac{k+1}{2} \rceil) }{\log (\lceil \frac{k+1}{2} \rceil-5)}$ (See Figure \ref{one}.).
\end{theorem}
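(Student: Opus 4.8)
The plan is to exploit the exceptionally rigid structure of the surfaces with $\x(S)=1$, namely the once-punctured torus and the four-holed sphere, for which $C(S)$ is the Farey graph and the \emph{only} essential non-annular subsurface, up to isotopy, is $S$ itself. Thus the non-annular sum in the statement collapses to the single term $[d_{S}(x,y)]_{k}$, and the entire left-hand numerator equals $[d_{S}(x,y)]_{k}+\sum_{A\subseteq S}\log[d_{A}(x,y)]_{k}$ with $A$ ranging over annular neighbourhoods of simple closed curves. I would first install the classical slope dictionary: representing $x$ and $y$ by their slopes, $i(x,y)$ equals, up to the normalization of the surface, the continuant $K(a_{1},\dots,a_{N})$ of the continued-fraction data read off the Farey path from $x$ to $y$. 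In this picture the curves carrying the twisting are the turning points of the path, the coefficient $a_{j}$ records the amount of twisting at the $j$-th turn, and $d_{A}(x,y)$ is, up to a bounded error, exactly this twisting.

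With the dictionary in place the inequality reduces to two lower bounds for $i(x,y)=K(a_{1},\dots,a_{N})$. For the annular sum I would use the elementary product bound $\log K(a_{1},\dots,a_{N})\ge\sum_{j}\log a_{j}$: converting each counted projection back to its twisting coefficient and summing should give $\sum_{A}\log[d_{A}(x,y)]_{k}\le\au_{m}\cdot\log i(x,y)$, where $m=\lceil\frac{k+1}{2}\rceil$. The quasi-constant $\au_{m}$ is precisely what the conversion costs: a projection is counted only when $d_{A}(x,y)>k$, and the factor-two relation between a projection and its twisting number forces the coefficient past the threshold $m$, while the bounded additive discrepancy between projection and twisting is absorbed using the defining identity $(m-5)^{\au_{m}}=m$ for $\au_{m}=\frac{\log m}{\log(m-5)}$. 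For the curve-graph term I would use the Fibonacci lower bound $i(x,y)\ge F_{d_{S}(x,y)}$, the sharp form of the Hempel--Lickorish inequality, which comfortably places $[d_{S}(x,y)]_{k}$ inside the remaining budget $(\au_{m}+1)\log i(x,y)+1$, using that this term is present only when $d_{S}(x,y)>k\ge 18$ is already large. Adding the two estimates yields $[d_{S}(x,y)]_{k}+\sum_{A}\log[d_{A}(x,y)]_{k}\le(2\au_{m}+1)\log i(x,y)+1$, which rearranges to the assertion.

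The main obstacle is making the projection--twisting dictionary effective with exactly these constants rather than up to an unspecified error. Two reconciliations are delicate: the factor of two relating the projection cutoff $k$ to the twisting threshold $m=\lceil\frac{k+1}{2}\rceil$, and the additive error --- pinned to the value $5$ --- that produces the denominator $\log(m-5)$; I must verify that a single bound on the projection--twisting discrepancy simultaneously delivers the threshold $m$ and the shift $5$, so that the conversion cost is never worse than $\au_{m}$. A second, more bookkeeping-level difficulty is ruling out double counting: the turning curves carrying large projections must be shown distinct, and reading off one twisting coefficient at a time (equivalently, untwisting) must change only the intended annular projection while leaving the others, and the identity $i(x,y)=K(a_{1},\dots,a_{N})$, intact. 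By contrast I expect the product and Fibonacci lower bounds for the continuant to be routine; the friction lies entirely in the explicit error analysis of the subsurface-projection--to--twisting correspondence and in adding the two contributions without incurring more than the stated additive unit.
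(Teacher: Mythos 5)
Your route is genuinely different from the paper's, so it is worth saying precisely what the paper does instead. The paper's proof of Theorem \ref{mainannulus} never touches the Farey/continued-fraction structure or any geodesic: Lemma \ref{annulus} converts a single large annular projection into the inequality $\frac{\log d_{A}(x,y)}{\au_{n+1}}+\log i(x,A)+\log i(A,y)\leq \log i(x,y)$, using the estimate $d_{A}(x,y)-5\leq i_{A}(a,b)$ from \cite{ABG} (this is where the shift $5$ is born); then a purely combinatorial chain argument takes over: Leininger's effective Behrstock inequality (Theorem \ref{BI}) shows in Lemma \ref{replaceannulus} that any two annuli in $\Z(x,y,n)$ are comparable under the relation $\pro$, Theorem \ref{overlappingannulus} telescopes a chain $A_{1}\pro A_{2}\pro\cdots\pro A_{N}$ to obtain $\sum_{A}\log d_{A}(x,y)\leq 2\au_{\lceil\frac{n+1}{2}\rceil}\cdot\log i(x,y)$, and the sharp form $d_{S}(x,y)\leq \log i(x,y)+1$ of Hempel--Lickorish finishes. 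That mechanism is exactly what generalizes to $\xi(S)>1$ (see Remark \ref{proofbybgit}), whereas your Farey-specific argument is confined to $\xi(S)=1$ and is in spirit the earlier proof from \cite{W3}, which the paper cites as having yielded the much worse cut-off $600$.

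The genuine gap is that the effective projection--twisting dictionary, which you yourself identify as carrying all the content, is asserted rather than supplied --- and your proposed provenance for the constants is wrong, so your verification plan targets a structural fact that does not exist. There is no ``factor-two relation between a projection and its twisting number'': at a pivot the annular projection distance and the continued-fraction coefficient agree up to bounded \emph{additive} error (with a convention-dependent global factor distinguishing $S_{1,1}$ from $S_{0,4}$, where intersection numbers are even and twists move slopes by $2$, and that factor points in the favorable direction anyway). In the paper, the halved threshold $\lceil\frac{k+1}{2}\rceil$ comes from the split $d_{A}(x,y)\leq d_{A}(x,A_{1})+d_{A}(A_{1},y)$ in Lemma \ref{replaceannulus}, and the shift $5$ from the \cite{ABG} annular-cover estimate; these are artifacts of the paper's mechanism, not facts your dictionary must reproduce --- you only need to beat them. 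Concretely, what is missing is: (i) an explicit additive bound $|d_{A}(x,y)-a_{j}|\leq c$ at pivots; (ii) a proof, with a threshold compatible with $k\geq 18$, that every annulus with $d_{A}(x,y)>k$ \emph{is} a pivot of the Farey geodesic (so no large projection escapes the continuant and no pivot is counted twice); and (iii) the check $\frac{\log(k+1)}{\log(k+1-c)}\leq \au_{\lceil\frac{k+1}{2}\rceil}$ for all $k\geq 18$, which holds precisely when $c$ is at most roughly $10$ (at $k=18$ one needs $\frac{\log 19}{\log(19-c)}\leq\frac{\log 10}{\log 5}\approx 1.43$). Since effective dictionaries with such small $c$ do exist for the Farey graph, your route can almost surely be completed, and your handling of the $[d_{S}(x,y)]_{k}$ term via Fibonacci growth is fine once its additive constant is absorbed using $d_{S}(x,y)>k$; but as written the proposal defers exactly the quantitative step that constitutes the proof, and until (i)--(iii) are established with explicit constants it does not prove the theorem.
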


\begin{figure}[p]
 \begin{center}
  \includegraphics[width=120mm]{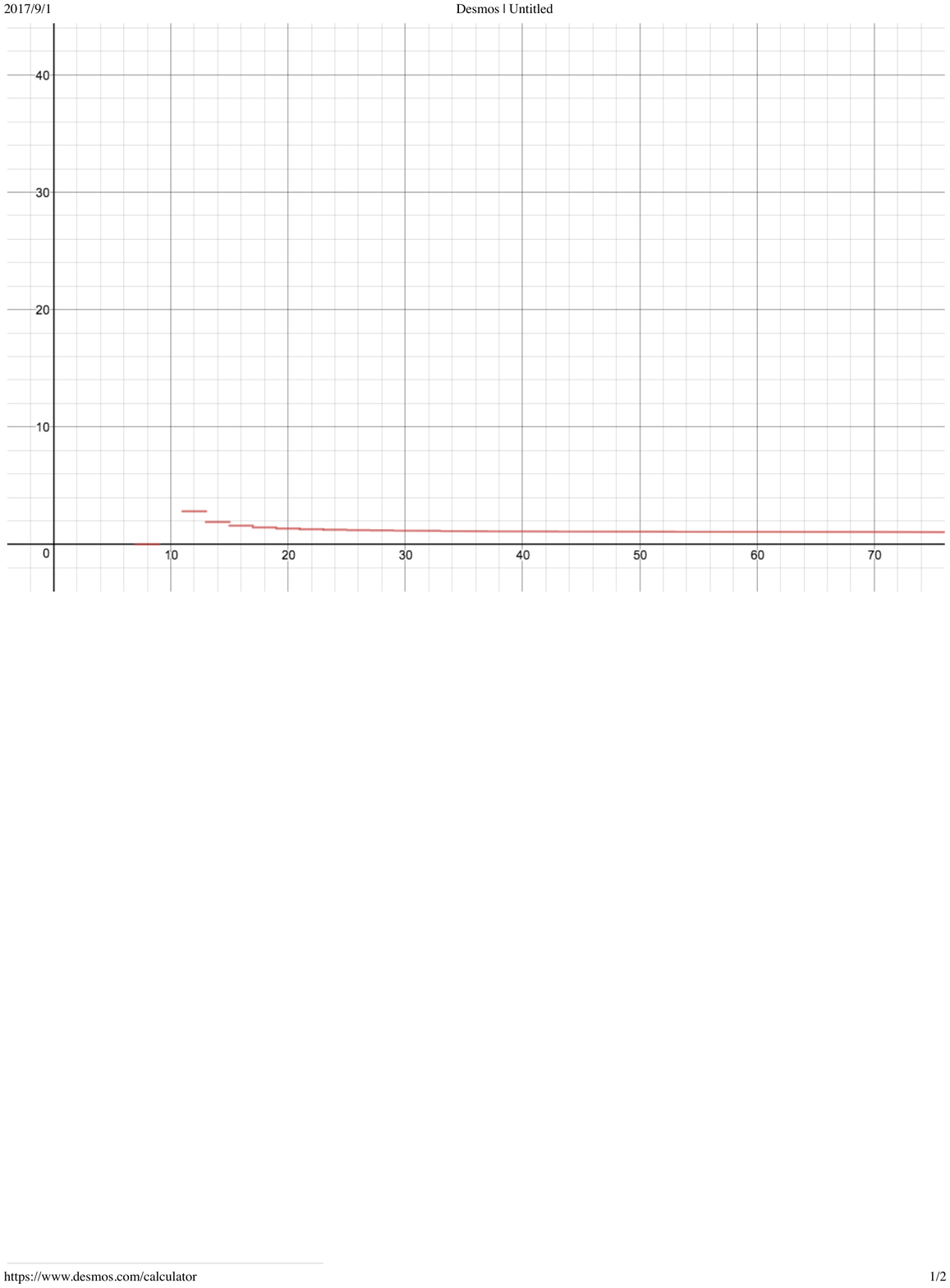}
 \end{center}
 \caption{The above function, $\au_{\lceil \frac{k+1}{2} \rceil}$, is bigger than $1$ and limits to $1$.}
 \label{one}
\end{figure}

We note that Theorem \ref{mainone} was proved by a different method in \cite{W3}. See Remark \ref{proofbybgit}. The cut-off constant was $600$ and the additive/multiplicative constants were $2$ in \cite{W3}.

We show
\begin{theorem}\label{main}
Suppose $\x(S)>1$. Let $x,y\in C(S)$. For all $k\geq  28$, we have $$ \frac{ \sum_{Z\subseteq S}[ d_{Z}(x,y)]_{k}+\sum_{A\subseteq S} \log [d_{A}(x,y)]_{k}}{U_{S,k}}  - \frac{2}{U_{S,k}}\leq \log i(x,y)$$ where $U_{S,k}= (\nv-1) \cdot  2\U_{\lceil \frac{k+1}{2} \rceil} +2$ with $\U_{\lceil \frac{k+1}{2} \rceil}= \frac{\lceil \frac{k+1}{2} \rceil}{\log \big(2^{\frac{\lceil \frac{k+1}{2} \rceil-12}{2}} -1\big)}$(See Figure \ref{two}.) and $P_{S}$ which is given by Lemma \ref{partition} (See Remark \ref{island}. In particular, $P_{S}\leq \xi(S)\cdot 4^{\xi(S)}$.).
\end{theorem}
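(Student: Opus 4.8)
The plan is to prove the equivalent upper bound
$\sum_{Z\subseteq S}[d_{Z}(x,y)]_{k}+\sum_{A\subseteq S}\log[d_{A}(x,y)]_{k}\leq U_{S,k}\cdot\log i(x,y)+2$; clearing the denominator in the statement, this is exactly what must be shown. I would first isolate the contribution of the ambient surface $S$ itself. By the Lickorish--Hempel inequality \cite{LIC,HEM} we have $[d_{S}(x,y)]_{k}\leq d_{S}(x,y)\leq 2\log i(x,y)+2$, which supplies the leading term $2\log i(x,y)$ and the additive constant $2$ inside $U_{S,k}\cdot\log i(x,y)+2$. Everything else must then be charged against the remaining budget $(P_{S}-1)\cdot 2L_{\lceil\frac{k+1}{2}\rceil}\cdot\log i(x,y)$ coming from the proper subsurfaces and the annuli.

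The quantitative core is a single-subsurface estimate, sharp in the cut-off. For a proper non-annular $Z$ with $d_{Z}(x,y)>k$ I would run the Lickorish--Hempel halving argument inside $Z$, applied to the projections $\pi_{Z}(x),\pi_{Z}(y)$, but tracking the constant as a function of the cut-off: once $d_{Z}(x,y)$ exceeds $k$, the number of intersections forced inside $Z$ is at least $2^{\frac{\lceil\frac{k+1}{2}\rceil-12}{2}}-1$, which yields $d_{Z}(x,y)\leq 2L_{\lceil\frac{k+1}{2}\rceil}\cdot\log i_{Z}(x,y)$ with no additive term, where $i_{Z}(x,y)$ counts the intersections of $x$ and $y$ carried inside $Z$ and $L_{\lceil\frac{k+1}{2}\rceil}\to 1$ as $k\to\infty$. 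The annular case is parallel: for $d_{A}(x,y)>k$ the twisting estimate gives $d_{A}(x,y)\leq i_{A}(x,y)+1$, hence $\log[d_{A}(x,y)]_{k}\leq 2L_{\lceil\frac{k+1}{2}\rceil}\cdot\log i_{A}(x,y)$ after absorbing the shift into the cut-off-dependent constant. In both cases a subsurface with a large projection contributes at most $2L_{\lceil\frac{k+1}{2}\rceil}\cdot\log i_{Z}(x,y)$ to the left-hand side.

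It then remains to sum these local estimates, and this is where Lemma \ref{partition} enters. Summing naively fails: there may be arbitrarily many large-projection subsurfaces (their number grows with $i(x,y)$), and for a family of pairwise \emph{disjoint} subsurfaces the local counts are only subadditive, $\sum_{j}i_{Z_{j}}(x,y)\leq i(x,y)$, which is far too weak to control $\sum_{j}\log i_{Z_{j}}(x,y)$ by $\log i(x,y)$. I would therefore use Lemma \ref{partition} to split the proper subsurfaces and annuli with $d_{Z}(x,y)>k$ into $P_{S}-1$ families in such a way that the local intersection numbers compound \emph{multiplicatively} inside each family, i.e. $\sum_{Z\in\mathcal{F}}\log i_{Z}(x,y)\lesssim\log i(x,y)$. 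Combined with the single-subsurface estimate this gives $\sum_{Z\in\mathcal{F}}[d_{Z}(x,y)]_{k}\leq 2L_{\lceil\frac{k+1}{2}\rceil}\cdot\log i(x,y)$ for each of the $P_{S}-1$ families, and adding the $S$-term produces $U_{S,k}\cdot\log i(x,y)+2$ as required. The bound $P_{S}\leq\xi(S)\cdot 4^{\xi(S)}$ reflects that the partition is indexed by the finitely many topological positions a subsurface can occupy, so that $P_{S}$ depends only on $S$.

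The main obstacle is exactly this per-family compounding estimate, together with the construction of the partition itself. One must show that within each family the subsurfaces are nested, or linearly ordered along the combinatorial hierarchy between $x$ and $y$, so that each successive large projection multiplies, rather than merely adds to, the global intersection number; the danger case is the one already visible in Example \ref{gap}, where disjoint twisting regions contribute additively, and the role of the partition is precisely to separate such parallel contributions into distinct families while bundling the genuinely compounding ones. I expect the argument to proceed by induction on $\xi(S)$, with Theorem \ref{mainone} serving as the base case $\xi(S)=1$ that anchors the annular and complexity-one contributions, and with Lemma \ref{partition} providing the bookkeeping that keeps the number of families, and hence the multiplicative constant $U_{S,k}$, bounded by a quantity depending only on $S$ and $k$.
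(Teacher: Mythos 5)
Your skeleton matches the paper's: isolate the $S$-term via Hempel--Lickorish, prove cut-off-sharp single-subsurface estimates, and use Lemma \ref{partition} to split the large-projection subsurfaces into $\nv-1$ families, each contributing at most $2\U_{\lceil\frac{k+1}{2}\rceil}\cdot\log i(x,y)$. But the heart of the argument --- the per-family compounding --- is exactly what you leave unproved, and the mechanism you propose for it would fail. The families produced by Lemma \ref{partition} are not nested chains or collections linearly ordered along a hierarchy; they are families of \emph{pairwise overlapping} subsurfaces (the Gaster--Greene--Vlamis coloring is applied to boundary components, and same-colored curves must intersect). Nested subsurfaces are just as dangerous as disjoint ones and are likewise separated into different families, because the tool that drives the compounding, Leininger's effective Behrstock inequality (Theorem \ref{BI}: if $d_{X}(Y,\mu)>9$ then $d_{Y}(X,\mu)\leq 4$), applies only to overlapping pairs. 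Relatedly, your single-subsurface estimate is too weak to be summed: Lemmas \ref{annulus} and \ref{base} give not merely $D_{Z}(x,y)\leq \U_{n+1}\log i_{Z}(a,b)$ but
$$\frac{D_{Z}(x,y)}{\U_{n+1}}+\log i(x,Z)+\log i(Z,y)\leq \log i(x,y),$$
and the two boundary-intersection terms $\log i(x,Z)$ and $\log i(Z,y)$ are precisely the slack into which the contributions of the other subsurfaces of the family are inserted.

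Concretely, the paper's compounding (Theorem \ref{overlappingannulus}, carried over by Lemma \ref{replace}) defines a binary relation $Z_{p}\ppro Z_{q}$ meaning $\frac{D_{Z_{q}}(x,y)}{2\U_{\lceil\frac{n+1}{2}\rceil}}+\log i(x,Z_{q})\leq \log i(x,Z_{p})$; a triangle-inequality dichotomy combined with Theorem \ref{BI} shows any two overlapping members of $\Z(x,y,k)$ are comparable, and an induction on the \emph{number of subsurfaces in the family} (not on $\xi(S)$, and not with Theorem \ref{mainone} as a base case --- the paper reruns the same argument, it does not induct on complexity) arranges each family into a chain $Z_{1}\ppro Z_{2}\ppro\cdots\ppro Z_{N}$, so that telescoping into the displayed estimate for $Z_{1}$ yields $\sum_{Z\in\mathscr{P}_{i}}D_{Z}(x,y)\leq 2\U_{\lceil\frac{k+1}{2}\rceil}\cdot\log i(x,y)$ (a dual relation $\pproy$ handles the subsurfaces charged to the $y$-side; see Remark \ref{fory}). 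Your proposal names the right obstacle --- that disjoint subsurfaces only give subadditivity of $i_{Z}$, which cannot control $\sum\log i_{Z}$ --- but supplies no substitute for this replacement-chain mechanism: nestedness is not what the partition provides, nor would it help, since for nested $W\subsetneq Z$ there is no analogue of Theorem \ref{BI} and the projections can be uncorrelated. As written, the step $\sum_{Z\in\mathcal{F}}\log i_{Z}(x,y)\lesssim\log i(x,y)$ inside each family is asserted rather than proved, so the proposal has a genuine gap at its central point.
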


%We note that $P_{S}\leq \xi(S)\cdot 4^{\xi(S)}$. See Remark \ref{island}.

\begin{figure}[p]
 \begin{center}
  \includegraphics[width=120mm]{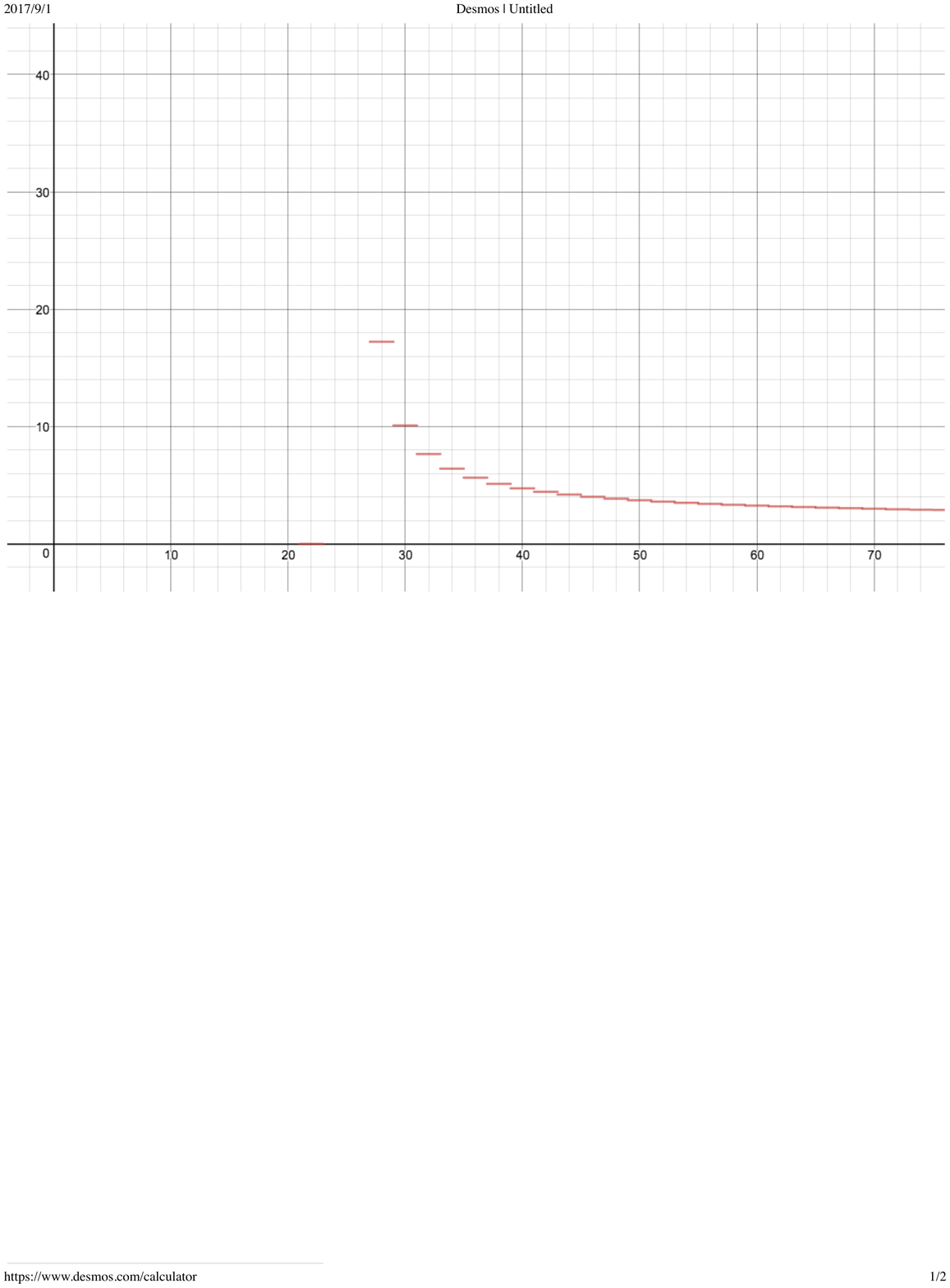}
 \end{center}
 \caption{The above function, $\U_{\lceil \frac{k+1}{2} \rceil}$, is bigger than $2$ and limits to $2$.}
 \label{two}
\end{figure}

Hence, combining with Theorem \ref{igd}, we have an effective coarse relationship between geometric intersection numbers of curves and the sum of their subsurface projection distances. By using this relationship, we give the following applications. %Throughout this section, we let $V_{S,k}$, $U_{S,k}$, $\U_{k+1}= \frac{k+1}{\log \big(2^{\frac{k-11}{2}} -1\big)}$, and $B\leq 100$ respectively be from Theorem \ref{igd}, Theorem \ref{main}, and Theorem \ref{BGIT}.

We study the intersection numbers of the curves contained in geodesics in the curve graph. %Similar studies are done, for instance, Shackelton \cite{SHA} and \cite{W3} show that, given a pair of curves, the intersection numbers of curves contained in tight geodesics between them are bounded by some function of a surface and the intersection number of them, the former is exponential on a surface and the latter is linear on a surface. Birman--Margalit--Menasco \cite{BMM} shows that, given a pair of curves, the intersection numbers of curves contained in efficient geodesics between them are bounded by the intersection number of and the distance between them. The following result is in this line, but our function depends only on a surface not on a given pair of curves. 
An uneffective and weaker version of it was given in \cite{W4}, but now with our effective relationship and a slightly more advanced technique, we obtain a stronger result. 
First, we study on Masur--Minsky tight geodesics. We show

\begin{theorem}
Let $x,y\in C(S)$ and $g=\{v_{i}\}$ be a tight geodesic between $x$ and $y$ such that $d_{S}(x,v_{i})=i$ for all $i$. Let $\{v_{t_{j}}\}_{j=0}^{n}\subseteq g$ such that $d_{S}(x, v_{t_{j}})+2<d_{S}(x, v_{t_{j+1}})$ for all $j$. For all $k\geq 228,$ we have $$\frac{\sum_{j=0}^{n-1}\log i(v_{t_{j}}, v_{t_{j+1}})}{V_{S,k}} \leq \bigg(\frac{k\cdot U_{S,k-200}}{k-200} +\frac{2}{3}\bigg)\cdot \log i(x,y)+  \frac{2k}{k-200} +\frac{2}{3} .$$ 
\end{theorem}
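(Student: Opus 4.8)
The plan is to sandwich the two sides using the effective relationships already in hand: Theorem \ref{igd} as an upper bound applied to each consecutive pair $v_{t_{j}},v_{t_{j+1}}$, and Theorem \ref{main} as a lower bound applied to the endpoints $x,y$, the gap being bridged by the coarse monotonicity of subsurface projections along the tight geodesic $g$. Throughout abbreviate $F_{c}(a,b)=\sum_{Z\subseteq S}[d_{Z}(a,b)]_{c}+\sum_{A\subseteq S}\log[d_{A}(a,b)]_{c}$ for any cut-off $c$. First I would apply Theorem \ref{igd} with cut-off $k$ to each pair and sum over $j$; after dividing by $V_{S,k}$ this gives
$$\frac{1}{V_{S,k}}\sum_{j=0}^{n-1}\log i(v_{t_{j}},v_{t_{j+1}})\leq \sum_{j=0}^{n-1}F_{k}(v_{t_{j}},v_{t_{j+1}})+n.$$
The residual $n$ is elementary to control: the hypotheses $d_{S}(x,v_{i})=i$ and $d_{S}(x,v_{t_{j}})+2<d_{S}(x,v_{t_{j+1}})$ force $t_{j+1}\geq t_{j}+3$, so $3n\leq t_{n}\leq d_{S}(x,y)$, and the Hempel--Lickorish inequality \cite{LIC,HEM} gives $d_{S}(x,y)\leq 2\log i(x,y)+2$; hence $n\leq \tfrac{2}{3}\log i(x,y)+\tfrac{2}{3}$, which accounts for exactly the summands $\tfrac{2}{3}\log i(x,y)$ and $\tfrac{2}{3}$ in the target.

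The crux is then the single estimate
$$\sum_{j=0}^{n-1}F_{k}(v_{t_{j}},v_{t_{j+1}})\leq \frac{k}{k-200}\,F_{k-200}(x,y).$$
Granting this, Theorem \ref{main} applied with cut-off $k-200$ (legitimate since $k\geq 228$ makes $k-200\geq 28$), rearranged as $F_{k-200}(x,y)\leq U_{S,k-200}\log i(x,y)+2$, turns the right-hand side into $\tfrac{k U_{S,k-200}}{k-200}\log i(x,y)+\tfrac{2k}{k-200}$, and combining this with the bound on $n$ reproduces the statement verbatim.

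To prove the crux I would argue one subsurface at a time, since both sides split as sums over the $Z$ and the $A$. Fix a non-annular $Z$ and retain only the indices $j_{1}<\dots<j_{m}$ with $d_{Z}(v_{t_{j}},v_{t_{j+1}})>k$. Each corresponding subpath of $g$ is a geodesic whose $Z$-projection has diameter exceeding $k\ (\geq 228)$, well above the Bounded Geodesic Image threshold, so it contains a vertex disjoint from $\partial Z$; because the index intervals $(t_{j_{l}},t_{j_{l}+1})$ are pairwise disjoint, coarse monotonicity of $\pi_{Z}$ along $g$, with the universal projection constant $M=200$, yields the essential disjointness $\sum_{l}\bigl(d_{Z}(v_{t_{j_{l}}},v_{t_{j_{l}+1}})-200\bigr)\leq d_{Z}(x,y)$. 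Since each reduced term exceeds $k-200$, this forces both $d_{Z}(x,y)>m(k-200)$ and $d_{Z}(x,y)>k-200$, the latter giving $[d_{Z}(x,y)]_{k-200}=d_{Z}(x,y)$; feeding $m<d_{Z}(x,y)/(k-200)$ into $\sum_{l}d_{Z}(\cdots)\leq d_{Z}(x,y)+200m$ produces $\sum_{l}d_{Z}(\cdots)\leq \tfrac{k}{k-200}d_{Z}(x,y)$, which is the per-$Z$ bound. It is exactly the coincidence of this projection constant with the $M=200$ underlying $V_{S,k}$ that dictates the cut-off shift and the threshold $k\geq 228$.

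The main obstacle is the annular part, where the logarithm obstructs the clean telescoping above. For a fixed annulus $A$ the vertices disjoint from $\partial A$ all lie within $C(S)$-distance two of one another, hence within a subpath of $g$ of index-width at most two, which sharply limits how many disjoint large subsegments can occur for a single $A$; the delicate point is to convert this confinement, together with $\sum_{l}\bigl(d_{A}(\cdots)-200\bigr)\leq d_{A}(x,y)$, into $\sum_{l}\log[d_{A}(v_{t_{j}},v_{t_{j+1}})]_{k}\leq \tfrac{k}{k-200}\log[d_{A}(x,y)]_{k-200}$. I expect to handle this by combining the bound on the number of large annular subsegments with the concavity of $\log$ and the monotonicity of $x\mapsto (\log x)/x$, and the careful check that the threshold $k\geq 228$ leaves enough quantitative room is where most of the work will sit. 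Once both the non-annular and annular per-subsurface bounds are established, summing over all $Z$ and $A$ delivers the crux and completes the proof.
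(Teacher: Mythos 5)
Your top-level architecture coincides with the paper's proof almost verbatim: apply Theorem \ref{igd} (through Theorem \ref{th}) to each consecutive pair, reduce to the crux inequality $\sum_{j}\S(v_{t_{j}},v_{t_{j+1}},k)\leq \frac{k}{k-200}\,\S(x,y,k-200)$, apply Theorem \ref{main} at cut-off $k-200$, and control $n$ by $t_{j+1}\geq t_{j}+3$ plus Hempel--Lickorish. The genuine gap is in your justification of the crux. The paper gets it from Lemma 2.4 of \cite{W4}, which uses Theorem \ref{BGIT} \emph{and tightness}: if $d_{W}(v_{t_{j}},v_{t_{j+1}})>B$ (with $B\leq 100$), then $d_{W}(x,v_{t_{j}})\leq B$ and $d_{W}(v_{t_{j+1}},y)\leq B$. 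This gives simultaneously the per-pair bound $d_{W}(v_{t_{j}},v_{t_{j+1}})\leq d_{W}(x,y)+200$ and, applied to two pairs at once, that \emph{at most one} index $j$ can have $d_{W}(v_{t_{j}},v_{t_{j+1}})>200$ for a fixed $W$. You instead assert a ``coarse monotonicity of $\pi_{Z}$ along $g$ with constant $200$'' yielding $\sum_{l}\big(d_{Z}(v_{t_{j_{l}}},v_{t_{j_{l}+1}})-200\big)\leq d_{Z}(x,y)$, but you never prove or cite it, and it is not a consequence of Theorem \ref{BGIT} alone: BGIT does not exclude backtracking. Concretely, the vertices disjoint from $\partial Z$ span an index window of width at most two, and that window can straddle the shared endpoint $v_{t_{j+1}}$ of two of your intervals (gap vertices at indices $t_{j+1}\pm 1$, with $v_{t_{j+1}}$ itself projecting far from $\pi_{Z}(x)\approx\pi_{Z}(y)$); then two consecutive pairs both have huge $Z$-projection while $d_{Z}(x,y)$ is small, and your quasi-additivity fails. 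Tightness is exactly what rules this out, and your proposal never uses it beyond naming it. (Incidentally, the $200$ here is $2B$ from Webb's bound, not the $M=200$ inside $V_{S,k}$; that coincidence plays no role.)

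The failure is fatal in the annular case even if quasi-additivity were granted. Suppose two pairs had $d_{A}>k$ for a single annulus $A$: quasi-additivity only forces $d_{A}(x,y)\geq 2(k-200)$, and then the left side of your target is at least $2\log k$ while the right side is $\frac{k}{k-200}\log d_{A}(x,y)$, which for large $k$ is roughly $\log k$ --- e.g.\ at $k=2000$ with $d_{1}=d_{2}$ slightly above $k$ and $d_{A}(x,y)=2(k-200)$, you get $\approx 21.9 \not\leq \approx 13.1$. Since the statement is claimed for all $k\geq 228$, no amount of concavity of $\log$ or monotonicity of $(\log x)/x$ can rescue the per-annulus inequality when two large subsegments occur; the \emph{only} viable route is to prove at most one pair is large per subsurface, which is what the tightness lemma of \cite{W4} delivers. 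Once that is in hand, both cases are immediate: $d_{Z}(v_{t_{j}},v_{t_{j+1}})\leq d_{Z}(x,y)+200\leq\frac{k}{k-200}d_{Z}(x,y)$ since $d_{Z}(x,y)>k-200$, and $\log\big(d_{A}(x,y)+200\big)\leq\frac{k}{k-200}\log d_{A}(x,y)$ since $d_{A}(x,y)>k-200\geq 28$. Everything else in your proposal --- the sum over $j$, the cut-off shift, the bound on $n$, and the final assembly --- matches the paper's argument.
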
        

Furthermore, we show a converse to the above for any geodesic.
\begin{theorem}
Let $x,y\in C(S)$ and $g=\{v_{i}\}$ be a geodesic between $x$ and $y$ such that $d_{S}(x,v_{i})=i$ for all $i$. Let $v_{p}, v_{q}\in g$ such that $d_{S}(x, v_{p})+2<d_{S}(x, v_{q})$. For all $k\geq 128 $, we have $$\frac{\log i(x,y)}{V_{S,k}}-1 \leq  \frac{k\cdot U_{S,k-100}}{k-100} \cdot \bigg( \log i(x,v_{q})+\log i(v_{p},y) \bigg) + \frac{4k}{k-100}.$$
\end{theorem}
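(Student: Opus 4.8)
The plan is to bracket $\log i(x,y)$ between the two effective distance formulas already in hand. Applying Theorem \ref{igd} to the pair $(x,y)$ with cutoff $k$ gives $\log i(x,y)\le V_{S,k}\,\Sigma_k(x,y)+V_{S,k}$, where I abbreviate $\Sigma_m(u,w)=\sum_{Z\subseteq S}[d_Z(u,w)]_m+\sum_{A\subseteq S}\log[d_A(u,w)]_m$; dividing by $V_{S,k}$ immediately produces the left-hand side, $\tfrac{\log i(x,y)}{V_{S,k}}-1\le \Sigma_k(x,y)$. Everything therefore reduces to the single estimate
\[
\Sigma_k(x,y)\ \le\ \frac{k\,U_{S,k-100}}{k-100}\big(\log i(x,v_q)+\log i(v_p,y)\big)+\frac{4k}{k-100},
\]
whose right-hand side I would produce by applying Theorem \ref{main} with cutoff $k-100$ to the pairs $(x,v_q)$ and $(v_p,y)$, yielding $\Sigma_{k-100}(x,v_q)\le U_{S,k-100}\log i(x,v_q)+2$ and its analogue. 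This is exactly why the hypothesis reads $k\ge 128$: Theorem \ref{main} requires its cutoff to be at least $28$, i.e. $k-100\ge 28$.

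The engine behind the factor $\tfrac{k}{k-100}$ is a localization of subsurface projections along $g$, carried out term by term for proper subsurfaces and annuli. The key elementary observation is that two vertices of $g$ that both miss a proper subsurface $Z$ (equivalently, have empty projection to $Z$) are each within distance $1$ of $\partial Z$ in $C(S)$, hence at distance at most $2$ from one another, hence at most $2$ apart in index along the geodesic; thus all vertices of $g$ missing $Z$ occupy a window of at most three consecutive indices. Since $d_S(x,v_p)+2<d_S(x,v_q)$ forces $q-p\ge 3$, this window cannot contain an index $\ge q$ and an index $\le p$ simultaneously, so either every vertex missing $Z$ has index $<q$ or every such vertex has index $>p$. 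In the first case all of $v_q,\dots,y$ project to $Z$, so the bounded geodesic image theorem gives $d_Z(v_q,y)\le 100$ and hence $d_Z(x,y)\le d_Z(x,v_q)+100$; the second case gives $d_Z(x,y)\le d_Z(v_p,y)+100$ symmetrically. A short calculation then converts ``$d_Z(x,y)>k$ and $d_Z(x,y)\le d_Z(x,v_q)+100$'' into $[d_Z(x,y)]_k\le \tfrac{k}{k-100}[d_Z(x,v_q)]_{k-100}$ — the multiplier $\tfrac{k}{k-100}$ appearing precisely because the threshold is lowered by $100$ while the value drops by at most $100$ — and likewise for the annular $\log$-terms, the latter being where $k\ge 128$ must be large enough to dominate $\log\tfrac{k}{k-100}$. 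Summing over all proper $Z$ and all annuli $A$ produces the desired bound for the non-top part of $\Sigma_k(x,y)$.

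The delicate point, which I expect to be the main obstacle, is the top term $Z=S$, the curve-graph distance itself: it is \emph{additive} rather than localized along $g$, so the term-by-term estimate above is simply false for $Z=S$. Here I would instead use geodesic additivity together with the Hempel--Lickorish inequality, namely $[d_S(x,y)]_k\le d_S(x,y)\le d_S(x,v_q)+d_S(v_p,y)$ and $d_S(x,v_q)\le 2\log i(x,v_q)+2$, $d_S(v_p,y)\le 2\log i(v_p,y)+2$, charging the main term directly against $\log i(x,v_q)+\log i(v_p,y)$; since $U_{S,k-100}\ge 2$ the resulting coefficient is compatible with $\tfrac{k\,U_{S,k-100}}{k-100}$. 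The real difficulty is fusing this main-surface contribution with the proper-subsurface contribution \emph{without} doubling the constants: each of $\log i(x,v_q)$ and $\log i(v_p,y)$ is bounded below both by the Theorem \ref{main} quantity $\tfrac1{U_{S,k-100}}(\Sigma_{k-100}-2)$ for that pair and, via Hempel--Lickorish, by half the corresponding curve-graph distance, and these two lower bounds must be spent \emph{simultaneously} against the two sources of size in $\Sigma_k(x,y)$ — the localized proper projections and the additive distance — rather than separately. Verifying that the slack furnished by $q-p\ge 3$ closes this estimate uniformly over all admissible $p,q$, so as to match the stated constants $\tfrac{k\,U_{S,k-100}}{k-100}$ and $\tfrac{4k}{k-100}$ exactly, is the crux of the argument.
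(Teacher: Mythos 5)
Your proposal reconstructs the paper's proof essentially verbatim up to its last paragraph: the left-hand inequality from Theorem \ref{igd}, the reduction to $\mathscr{S}(x,y,k)\le\frac{k}{k-100}\big(\mathscr{S}(x,v_{q},k-100)+\mathscr{S}(v_{p},y,k-100)\big)$ followed by Theorem \ref{main} at cutoff $k-100$ (whence $k\ge 128$), and the localization step. Your ``window of three consecutive indices'' argument is exactly the paper's case split on $I=\min\{i\mid \pi_{W}(v_{i})=\emptyset\}$: if $I>p$ then $d_{W}(x,v_{p})\le B\le 100$, and if $I\le p$ then, since any two vertices with empty projection to $W$ are within distance $2$ of each other in $C(S)$ and $q-p\ge 3$, every $v_{i}$ with $i\ge q$ projects, so $d_{W}(v_{q},y)\le B$ by Theorem \ref{BGIT}; your termwise conversion $[d_{W}(x,y)]_{k}\le\frac{k}{k-100}[d_{W}(\cdot,\cdot)]_{k-100}$, including the annular logarithmic version (where $k-100\ge 28$ guarantees $\log d_{A}>\log 28>1/\ln 2$), is precisely the computation implicit in the paper's displayed chain.

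The gap is your unresolved top term $Z=S$, and the repair you sketch would not close: charging $[d_{S}(x,y)]_{k}$ through Hempel--Lickorish adds $2\log i(x,v_{q})+2\log i(v_{p},y)+4$ on top of the full charge $\frac{k}{k-100}\big(\mathscr{S}(x,v_{q},k-100)+\mathscr{S}(v_{p},y,k-100)\big)$ already spent on the proper subsurfaces, overshooting both stated constants, and the surplus cannot be reclaimed from the sub-pairs' own top terms $[d_{S}(x,v_{q})]_{k-100}$, $[d_{S}(v_{p},y)]_{k-100}$, since those truncations can vanish exactly when the intersection numbers are large. What you missed is that no Hempel--Lickorish is needed here: since the $v_{i}$ lie on a geodesic, $d_{S}(x,y)=d_{S}(x,v_{q})+d_{S}(v_{q},y)\le d_{S}(x,v_{q})+d_{S}(v_{p},y)$, so if $d_{S}(x,y)>k$ the larger summand exceeds $k/2\ge k-100$ and survives the truncation, while $\frac{k}{k-100}\ge 2$ yields $[d_{S}(x,y)]_{k}\le\frac{k}{k-100}\big([d_{S}(x,v_{q})]_{k-100}+[d_{S}(v_{p},y)]_{k-100}\big)$; distinct terms of $\mathscr{S}(x,y,k)$ are charged to distinct terms of the two sub-sums, so there is no fusion problem at all. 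Note, however, that both steps of this fix use $k\le 200$, and for larger $k$ your unease is actually vindicated against the paper itself: taking $k=204$, $d_{S}(x,y)=205$, $p=101$, $q=104$, with all relevant proper subsurface projections at most $104$ (e.g.\ along a geodesic with uniformly bounded projections), both truncated top terms on the right vanish while the left-hand side is $205$, so the intermediate inequality $\mathscr{S}(x,y,k)\le\frac{k}{k-100}\big(\mathscr{S}(x,v_{q},k-100)+\mathscr{S}(v_{p},y,k-100)\big)$ --- which the paper asserts without isolating $Z=S$, even though Theorem \ref{BGIT} applies only to proper subsurfaces --- genuinely fails, and for such $k$ the conclusion survives only via your Hempel--Lickorish device at the cost of worse constants. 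So your proposal shares the paper's soft spot but merely declares it; the geodesic-additivity observation above is what settles it in the range $128\le k\le 200$.
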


We study intersection numbers of curves under iteration of pure mapping classes. 
In example \ref{gap}, we reviewed that if $x,a\in C(S)$ and $T_{a}$ is Dehn twist along $a$, then $i(x, T_{a}^{n}(x))=|n| \cdot i(a,x)^{2}.$ In the language of subsurface projections, it is
$$d_{A}(x, T_{a}^{n}(x) )+ 2\log i(x, A)-1= \log i(x, T_{a}^{n}(x))$$
where $A$ is the support of $T_{a}$, $Supp(T_{a})$. Dehn twists are the simplest pure mapping classes. We give an analogous identity for any pure mapping classes. We recall that if $\phi$ is a pure mapping class, then $\phi$ acts hyperbolically on the curve complexes of its supports, $Supp(\phi)$. %If $Z\in Supp(\phi)$ such that $Z$ is not annulus, then $d_{Z}(x, \phi^{n}(x))\geq \frac{|n|}{200 \cdot |\xi(S)|^{2}}$ by Gadre--Tsai \cite{GadreTsai}. If $A\in Supp(\phi)$ such that $A$ is an annulus, then $d_{A}(x, \phi^{n}(x))\geq |n|$ by Masur--Minsky \cite{MM2}. 
We show

\begin{theorem}
Let $x \in C(S).$ Let $\phi$ be a pure mapping class. If $n$ is raised so that $d_{Z_{i}}(x,\p(x))>k\geq14$ for all $Z_{i}\in Supp(\phi),$ then 
$$\frac{  \sum_{Z_{i}\in Supp(\phi)} D_{Z_{i}}(x,\p(x)) +  2\cdot \big( \sum _{i}  \log i(x,Z_{i})  \big)}{\x(S)\cdot \U_{k+1} } \leq  \log i(x,\p(x)).$$
where $\U_{k+1}= \frac{k+1}{\log \big(2^{\frac{k-11}{2}} -1\big)}$ (See Figure \ref{four}) and where $D_{Z_{i}}$ denotes $\log d_{Z_{i}}$ if $Z_{i}$ is an annulus and $D_{Z_{i}}$ denotes $d_{Z_{i}}$ if $Z_{i}$ is not an annulus. 
\end{theorem}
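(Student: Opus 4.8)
The plan is to localize the inequality to the individual supports of $\phi$ and then add the contributions. Since $\phi$ is pure, the subsurfaces $Z_{i}\in Supp(\phi)$ are pairwise disjoint, and a collection of pairwise disjoint essential subsurfaces (annuli included, each non-annular piece having complexity at least $1$) can contain at most $\x(S)$ members; hence $|Supp(\phi)|\le \x(S)$. Disjointness also means that every intersection point of $x$ and $\p(x)$ lies in at most one $Z_{i}$, so writing $i_{Z_{i}}(x,\p(x))$ for the number of such points contained in $Z_{i}$ we have $i_{Z_{i}}(x,\p(x))\le i(x,\p(x))$ for every $i$. It therefore suffices to prove the single-support inequality
$$D_{Z_{i}}(x,\p(x))+2\log i(x,Z_{i})\le \U_{k+1}\cdot \log i_{Z_{i}}(x,\p(x)),$$
because summing it over the at most $\x(S)$ supports and then bounding each $i_{Z_{i}}(x,\p(x))$ by $i(x,\p(x))$ yields exactly the stated estimate, with no additive term.

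For the single-support inequality I would isolate two ingredients. The first is the effective single-subsurface intersection--distance estimate that underlies Theorems \ref{mainone} and \ref{main}: applied to the pair $x,\p(x)$ in $Z_{i}$, and using the hypothesis $d_{Z_{i}}(x,\p(x))>k$, it gives $D_{Z_{i}}(x,\p(x))\le \U_{k+1}\cdot\log i\big(\pi_{Z_{i}}(x),\pi_{Z_{i}}(\p(x))\big)$, where $\pi_{Z_{i}}$ is the subsurface projection; here $\U_{k+1}$ is precisely the one-subsurface quasiconstant, and the condition $d_{Z_{i}}>k$ is what allows the cut-off to remove the additive error. The second is a strand-multiplicity estimate. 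Because $\phi$ is pure it preserves $Z_{i}$ and $\partial Z_{i}$, so $i(\p(x),Z_{i})=i(x,Z_{i})$; thus $x$ and $\p(x)$ each restrict to exactly $i(x,Z_{i})$ essential arcs in $Z_{i}$, and the arcs of $\p(x)$ are the images of those of $x$ under $\phi^{n}|_{Z_{i}}$. Since $n$ has been raised so that $d_{Z_{i}}(x,\p(x))>k$, every arc of $x$ is stretched (or, if $Z_{i}$ is an annulus, twisted) by $\phi^{n}|_{Z_{i}}$ so that it meets every arc of $\p(x)$ comparably to the way the projections $\pi_{Z_{i}}(x)$ and $\pi_{Z_{i}}(\p(x))$ meet; this produces the factor $i(x,Z_{i})^{2}$, giving
$$\log i_{Z_{i}}(x,\p(x))\ge \log i\big(\pi_{Z_{i}}(x),\pi_{Z_{i}}(\p(x))\big)+2\log i(x,Z_{i}).$$
Combining the two ingredients and using $\U_{k+1}\ge 1$ yields the single-support inequality.

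It is worth separating the two types of support, as the bookkeeping differs. When $Z_{i}$ is an annulus, $D_{Z_{i}}=\log d_{Z_{i}}$, the map $\phi^{n}|_{Z_{i}}$ is a power of the Dehn twist along the core, and the relation $\log i_{Z_{i}}(x,\p(x))=\log d_{Z_{i}}(x,\p(x))+2\log i(x,Z_{i})$ is the effective form of the annular identity recalled in Example \ref{gap}. When $Z_{i}$ is not an annulus, $D_{Z_{i}}=d_{Z_{i}}$, the map $\phi^{n}|_{Z_{i}}$ is pseudo-Anosov, and $\pi_{Z_{i}}(\p(x))=\phi^{n}|_{Z_{i}}(\pi_{Z_{i}}(x))$ travels distance $d_{Z_{i}}(x,\p(x))$ in $C(Z_{i})$, so the single-subsurface estimate supplies the bound on $D_{Z_{i}}$ in terms of $\log i\big(\pi_{Z_{i}}(x),\pi_{Z_{i}}(\p(x))\big)$.

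The main obstacle I anticipate is the strand-multiplicity estimate. To obtain the factor $i(x,Z_{i})^{2}$ one must control all $i(x,Z_{i})^{2}$ pairs of arcs uniformly, not merely the single pair realizing the projection, and show that each pair contributes comparably; moreover, since the theorem carries no additive constant, the additive slack in this estimate must be absorbed entirely into the cut-off. This is exactly where the hypotheses $d_{Z_{i}}(x,\p(x))>k\ge 14$ and the precise offset in $\U_{k+1}=\frac{k+1}{\log\big(2^{\frac{k-11}{2}}-1\big)}$ are used; handling every strand uniformly, rather than one representative arc, while keeping the bookkeeping additive-free, is the delicate step.
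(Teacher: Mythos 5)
Your overall skeleton is the paper's: localize to the supports, use purity to get $i(\p(x),Z_{i})=i(\p(x),\p(Z_{i}))=i(x,Z_{i})$ so that the number of arc pairs in $Z_{i}$ is comparable to $i(x,Z_{i})^{2}$, use $|Supp(\phi)|\leq \x(S)$, and sum over the disjoint supports (your step of bounding each $\log i_{Z_{i}}(x,\p(x))$ by $\log i(x,\p(x))$ plays the role of the paper's AM--GM applied to $\sum_{Z_{i}}\sum_{a,b}i_{Z_{i}}(a,b)\leq i(x,\p(x))$, and both yield the division by $\x(S)$). However, the step you explicitly defer --- the ``strand-multiplicity estimate'' --- is precisely where the proof lives, so as written there is a genuine gap; moreover, the route you sketch for it would not be carried out as stated. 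Your annular ``identity'' $\log i_{Z_{i}}(x,\p(x))=\log d_{Z_{i}}(x,\p(x))+2\log i(x,Z_{i})$ is an exact identity only for a power of a Dehn twist acting on a curve in $S$, not for the localized count $i_{Z_{i}}$ with the normalizations used here, and your non-annular version asks you to compare every arc pair to ``the way the projections meet,'' i.e.\ to control all $i(x,Z_{i})^{2}$ pairs against one representative pair --- which is exactly the uniformity you admit you cannot supply, and which would require an analysis of the dynamics of $\phi^{n}|_{Z_{i}}$ that the theorem's hypotheses do not support quantitatively.

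The missing idea is that no pair ever needs to be compared to any other pair: the uniform per-pair bound is already the content of Lemmas \ref{annulus} and \ref{base}, and it comes directly from the distance $d_{Z_{i}}(x,\p(x))$ rather than from a distinguished pair of projected curves. Each arc $a\in\{x\cap Z_{i}\}$ gives rise to a curve $x'\in\pi_{Z_{i}}(x)$, and since the projection of a single curve has uniformly bounded diameter, every pair satisfies $d_{Z_{i}}(x',y')\geq d_{Z_{i}}(x,\p(x))-6$; then Hempel--Lickorish applied \emph{inside} $Z_{i}$, together with $i_{Z_{i}}(x',y')\leq 4\,i_{Z_{i}}(a,b)+4$, gives $i_{Z_{i}}(a,b)\geq 2^{\frac{d_{Z_{i}}(x,\p(x))-12}{2}}-1$ for every pair $(a,b)$; in the annular case the collar estimate of \cite{ABG}, $i_{S_{A}}(x',y')\leq i_{A}(a,b)+2$ combined with $d_{A}(x,y)\leq i_{S_{A}}(x',y')+3$, gives $i_{A}(a,b)\geq d_{A}(x,\p(x))-5$. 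Hence $D_{Z_{i}}(x,\p(x))/\U_{k+1}\leq \log i_{Z_{i}}(a,b)$ for \emph{all} pairs (this is where $k\geq 14$ and the precise form of $\U_{k+1}$ enter), and your intermediate quantity $i\big(\pi_{Z_{i}}(x),\pi_{Z_{i}}(\p(x))\big)$, along with ingredient (2), should simply be discarded: summing the per-pair bound over the $\approx i(x,Z_{i})^{2}$ pairs and over the disjoint supports finishes the proof with no per-strand dynamics at all.
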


We show
\begin{theorem}
Let $x \in C(S).$ Let $\phi$ be a pure mapping class. If $n$ is raised so that $d_{Z_{i}}(x,\p(x))>k\geq \max\{M_{\phi}, 32\}$ for all $Z_{i}\in Supp(\phi),$ then $$\frac{ \log i(x,\p(x)) }{V_{S,k}}-1\leq  \sum_{Z_{i}\in Supp(\phi)} D_{Z_{i}}(x,\p(x))+  \frac{2k\cdot U_{S,k-4}}{k-4} \cdot  \bigg(\sum _{i}  \log i(x,Z_{i}) \bigg)+ \frac{4k\cdot \xi(S)}{k-4}$$
where $M_{\phi}$ is such that $d_{W}(x, \phi^{n}(x))\leq M_{\phi}$ for all $x\in C(S)$, $n\in \mathbb{Z}$, and $W$ which is properly contained in the elements of $Supp(\phi)$ (See Section \ref{five}.) and where $D_{Z_{i}}$ denotes $\log d_{Z_{i}}$ if $Z_{i}$ is an annulus and $D_{Z_{i}}$ denotes $d_{Z_{i}}$ if $Z_{i}$ is not an annulus.
\end{theorem}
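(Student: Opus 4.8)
The plan is to derive the upper bound on $\log i(x,\p(x))$ from the effective upper bound of Theorem \ref{igd} and then to reorganize the resulting sum of subsurface projections according to how each subsurface sits relative to $Supp(\phi)$. First I would apply Theorem \ref{igd} to the pair $x,\p(x)$ and divide by $V_{S,k}$, which gives
$$\frac{\log i(x,\p(x))}{V_{S,k}}-1\leq \sum_{Z\subseteq S}[d_{Z}(x,\p(x))]_{k}+\sum_{A\subseteq S}\log[d_{A}(x,\p(x))]_{k}.$$
It then suffices to bound the right-hand sum by $\sum_{Z_{i}\in Supp(\phi)}D_{Z_{i}}(x,\p(x))$ together with the advertised intersection-number error terms. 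I would split the subsurfaces $Y\subseteq S$ (annular or not) into four classes: (I) the supports $Z_{i}\in Supp(\phi)$; (II) those properly contained in some support; (III) those disjoint from every support; and (IV) those that cross some component of $\partial_{\phi}:=\bigcup_{i}\partial Z_{i}$, including subsurfaces that contain a support.

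For class (I), the hypothesis $d_{Z_{i}}(x,\p(x))>k$ forces $[d_{Z_{i}}]_{k}=d_{Z_{i}}$ for non-annular supports and $\log[d_{A_{i}}]_{k}=\log d_{A_{i}}$ for annular ones, so these terms contribute exactly $\sum_{Z_{i}\in Supp(\phi)}D_{Z_{i}}(x,\p(x))$, the first summand on the right. For class (II), the defining property of $M_{\phi}$ gives $d_{W}(x,\p(x))\leq M_{\phi}\leq k$, so every such term is annihilated by the cut-off $[\cdot]_{k}$ and contributes $0$. For class (III), I would use that $\phi$ is pure with support $Supp(\phi)$, so $\p$ restricts to the identity on $S\setminus\bigcup_{i}Z_{i}$; hence for $Y$ disjoint from all supports the arcs $x\cap Y$ and $\p(x)\cap Y$ coincide, whence $d_{Y}(x,\p(x))=0$. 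Only class (IV) survives.

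Class (IV) is the crux. For $Y$ crossing $\partial_{\phi}$, I would combine the coarse triangle inequality in $C(Y)$ through $\partial_{\phi}$ with the fact that $\p(\partial_{\phi})=\partial_{\phi}$ to obtain
$$d_{Y}(x,\p(x))\leq d_{Y}(x,\partial_{\phi})+d_{\phi^{-n}(Y)}(x,\partial_{\phi}),$$
thereby reducing each class-(IV) projection of $(x,\p(x))$ to two projections of the pair $(x,\partial_{\phi})$ onto subsurfaces that still cross $\partial_{\phi}$. Summing over class (IV), then replacing $\partial_{\phi}$ by its components $\partial Z_{i}$ and applying the effective upper bound of Theorem \ref{main} (or Theorem \ref{mainone} when $\xi(S)=1$) to each pair $(x,\partial Z_{i})$, converts this sum into $\sum_{i}\log i(x,\partial Z_{i})=\sum_{i}\log i(x,Z_{i})$. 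The factor $2$ in $\tfrac{2k\cdot U_{S,k-4}}{k-4}$ records the two projections above, the shift $k\mapsto k-4$ together with the ratio $\tfrac{k}{k-4}$ absorbs the loss incurred in pushing the cut-off $[\cdot]_{k}$ through the triangle inequality, and the $\xi(S)$ in $\tfrac{4k\cdot\xi(S)}{k-4}$ counts the at most $\xi(S)$ components of $\partial_{\phi}$ to which the relationship is applied (each contributing the additive constant $2$ of Theorem \ref{main}).

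The main obstacle I anticipate is exactly this class-(IV) bookkeeping: justifying the triangle-inequality reduction via $\p(\partial_{\phi})=\partial_{\phi}$ while keeping every subsurface in the admissible range of Theorems \ref{main}/\ref{mainone}, and tracking how the cut-off degrades from $k$ to $k-4$ so that the constants emerge exactly as $\tfrac{2k\cdot U_{S,k-4}}{k-4}$ and $\tfrac{4k\cdot\xi(S)}{k-4}$ rather than merely up to a further multiplicative factor. Classes (I)--(III), by contrast, follow cleanly from the purity of $\phi$, the hypothesis $k\geq M_{\phi}$, and the behaviour of the cut-off $[\cdot]_{k}$.
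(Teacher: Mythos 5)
Your overall architecture matches the paper's proof quite closely: the paper also starts from Theorem \ref{igd} applied to $(x,\phi^{n}(x))$, discards subsurfaces properly contained in supports via $k\geq M_{\phi}$, discards subsurfaces disjoint from $Supp(\phi)$ (the paper records $d_{W}(x,\phi^{n}(x))\leq 3$ rather than your slightly-too-strong ``$=0$'', but either way these die under the cutoff), keeps the supports as the first summand, and handles the overlapping subsurfaces by a triangle split through $\partial Z_{i}$ followed by Theorem \ref{th} and the equivariance $i(Z_{i},\phi^{n}(x))=i(\phi^{-n}(Z_{i}),x)=i(Z_{i},x)$. In fact your split $d_{Y}(x,\phi^{n}(x))\leq d_{Y}(x,\partial_{\phi})+d_{\phi^{-n}(Y)}(x,\partial_{\phi})$ is the paper's split $d_{W}(x,\phi^{n}(x))\leq d_{W}(x,Z_{i})+d_{W}(Z_{i},\phi^{n}(x))$ rewritten by equivariance, so classes (I)--(III) and the skeleton of (IV) are sound.

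The genuine gap is exactly the class-(IV) bookkeeping you flag as your anticipated obstacle: as written, your mechanism cannot deliver the stated constants, because you never invoke Theorem \ref{BI}. From $d_{W}(x,\phi^{n}(x))>k$ and the triangle inequality alone you only learn that \emph{one} of the two terms exceeds $k/2$; the other can be of comparable size, in which case \emph{both} fall below the cutoff $k-4$ and are invisible to $\S(x,Z_{i},k-4)$ and $\S(Z_{i},\phi^{n}(x),k-4)$. You would be forced down to a cutoff near $\lceil k/2\rceil$ (precisely the phenomenon behind the $\lceil\frac{k+1}{2}\rceil$ constants in Lemmas \ref{replaceannulus} and \ref{replace}), yielding constants of the shape $U_{S,\lceil k/2\rceil-c}$ and a larger threshold on $k$, not $\frac{2k\cdot U_{S,k-4}}{k-4}$ with $k\geq 32$. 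The missing idea is the effective Behrstock inequality: since $W$ overlaps some $Z_{i}$ and $d_{Z_{i}}(x,\phi^{n}(x))>k\geq 32$, the triangle inequality in $C(Z_{i})$ gives $d_{Z_{i}}(x,W)>9$ or $d_{Z_{i}}(W,\phi^{n}(x))>9$, whence by Theorem \ref{BI} $d_{W}(x,Z_{i})\leq 4$ or $d_{W}(Z_{i},\phi^{n}(x))\leq 4$; therefore the \emph{other} term exceeds $k-4$, survives the cutoff $k-4$, and dominates $d_{W}(x,\phi^{n}(x))$ up to the factor $\frac{k}{k-4}$. This single application is what makes the constants emerge exactly as $\frac{2k\cdot U_{S,k-4}}{k-4}$ and $\frac{4k\cdot\xi(S)}{k-4}$ (with $\xi(S)$ counting $|Supp(\phi)|\leq\xi(S)$). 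A secondary slip: working with the full multicurve $\partial_{\phi}$ and only afterwards ``replacing by components'' loses further additive constants, since the projections of the disjoint curves $\partial Z_{j}$ to $Y$ are mutually close but not equal; the paper avoids this by assigning each overlapping $W$ to a single $Z_{i}$ (the sets $Supp(\phi)^{c}_{i}$) from the outset.
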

%$$W\subsetneq Z_{i}$ for every $Z_{i}\in Supp(\phi)$

\subsection{Acknowledgements}
The author thanks Mladen Bestvina and Ken Bromberg for suggesting to obtain the main coarse relationship in this paper. The author thanks Tarik Aougab, Martin Bobb, Ken Bromberg, Hidetoshi Masai, and Nicholas Vlamis for useful conversations. 

The author thanks Sadayoshi Kojima for his hospitality while the author was in Tokyo Institute of Technology in Summer $2017$. The trip was beneficial to this paper. The author thanks his home institute, the University of Hawaii at Manoa, for supporting the trip. 

Finally, the author thanks U.S. National Science Foundation grants DMS 1107452, 1107263, 1107367 ``RNMS: GEometric structures And Representation varieties" (the GEAR Network).

%\subsection{Other stories in effectivization}
%A result of Masur--Minksy \cite{MM2} says that the distance between two (complete clean) markings in the marking complex, the sum of all distances of the tight geodesics appearing in the hierarchy between them, the sum of all sufficiently large subsurface projection distances between them are all coarsely equal. %Since the mapping class groups admits a properly discontinuous and cocompact action to the marking complex, the word norm of elements in the mapping class groups is coarsely equal to the sum of all sufficiently large subsurface projection distances between two markings that are tied via the mapping classes. 

%A result of Brock \cite{BROCK} says that the Weil--Petersson distance between two points in Teichum\"uller space, the volume of the convex core of the quasi-Fuchisian hyperbolic $3$-manifold corresponding to them via Bers simultaneous uniformization theorem, the distance between two Bers short pants decompositions corresponding to them in the pants complex are all coarsely equal.

%In a recent work of Aougab--Taylor--Webb \cite{ATW}, they studied the above aspect on the above Masur--Minksy theory \cite{MM2} and applied their study to various works in the literature among of which is the above Brock theory \cite{BROCK}.

\section{Preliminaries}
%The goal of this section is to establish languages so that the results in Section \ref{results} make sense and to collect machinery to prove them.

We start with the definition of subsurface projections from \cite{MM2}. 
\begin{definition}[\cite{MM2}] 
Let $x\in C(S).$
\begin{itemize}
\item Let $Z\subseteq S$ such that $Z$ is not an annulus, then $\pi_{Z}:C(S)\rightarrow C(Z)$ is defined as follows: $\pi_{Z}(x)$ is the set of all curves which arise as the boundary components of the regular neighborhoods of $a\cup Z$ for all $a\in \{x\cap Z\}.$
\item Let $A\subseteq S$ such that $A$ is an annulus. Let $A_{S}$ denote the compactification of $\mathbb{H}^{2}/\pi_{1}(A)$ and $p_{A}$ denote the associated covering map. The vertices of $C(A)$ are simple arcs which connects two boundary components of $A_{S}$. %Two arcs which are isotopic rel $\partial(A_{S})$ setwise are distinct vertices since they are all up to rel $\partial(A_{S})$ pointwise. 
The edges are realized by disjointness. Then, $\pi_{Z}:C(S)\rightarrow C(A)$ is defined as follows: $\pi_{A}(x)$ is the set of all arcs which arise as the lift of $x$ via $p_{A}$. 
\end{itemize}
\end{definition}

\begin{notation}
Let $X,Y\subseteq S$. We let $d_{X}(Y,)$ denote $d_{X}(\partial(Y),)$ and let $i(X,)$ denote $i(\partial(X),).$
\end{notation}

The following is due to Behrstock \cite{BEH}. In this paper we use an effective one due to Leininger.

\begin{theorem}[Leininger] \label{BI}
Let $X,Y\subseteq S$ such that they overlap i.e. $X \pitchfork Y$. Let $\mu$ be a simplex. If $d_{X}(Y,\mu)>9$ then $d_{Y}(X,\mu)\leq 4.$
\end{theorem}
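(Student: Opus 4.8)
The plan is to prove the statement directly via a topological dichotomy, using only two elementary facts implicit in the definition of $\pi_Z$ from \cite{MM2}: the projection of a single simplex to any $C(Z)$ has uniformly bounded diameter, and two disjoint arcs or curves that each meet $Z$ have projections to $C(Z)$ a uniformly bounded distance apart. Packaging these two facts as a single ``disjointness bound'' (disjoint simplices meeting $Z$ are at distance $\le 4$ in $C(Z)$) is exactly what will produce the explicit conclusion constant $4$, and tracking the same quantities on the hypothesis side is what produces the threshold $9$. The strategy is to show that the hypothesis $d_{X}(Y,\mu)>9$ forces every arc of $\mu$ lying in $Y$ to avoid $\partial X$ altogether, after which the disjointness bound immediately yields $d_{Y}(X,\mu)\le 4$.

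First I would record the reductions forced by the hypothesis. Since $d_{X}(Y,\mu)>9>0$ we have $\pi_{X}(\mu)\neq\emptyset$, so $\mu$ crosses $X$; and since $X\pitchfork Y$ the curve $\partial Y$ crosses $X$, so $\pi_{X}(\partial Y)\neq\emptyset$. Next, $\mu$ must actually enter $Y$: if $\mu\cap Y=\emptyset$ then $\mu$ can be isotoped off $Y$ to be disjoint from $\partial Y$, and $\mu,\partial Y$ are then disjoint curves both meeting $X$, so the disjointness bound gives $d_{X}(Y,\mu)\le 4<9$, a contradiction. Hence $\mu\cap Y$ contains at least one component $\alpha$ (an essential arc with endpoints on $\partial Y$, or a closed curve) with interior in $\mathrm{int}(Y)$.

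The core is a dichotomy applied to each component $\alpha$ of $\mu\cap Y$. Either (i) $\alpha$ is disjoint from $X$, in which case $\alpha$ misses $\partial X$ and, as a representative of an element of $\pi_{Y}(\mu)$, lies within the disjointness bound of $\pi_{Y}(\partial X)$ in $C(Y)$; or (ii) $\alpha$ meets $X$, in which case I take a component $\beta$ of $\alpha\cap X$. The key observation is that such a $\beta$ is simultaneously a genuine component of $\mu\cap X$ (its endpoints are honest transverse crossings of $\partial X$ by $\mu$, and its interior contains no further crossing of $\partial X$) and an arc whose interior lies in $\mathrm{int}(Y)$ and is therefore disjoint from $\partial Y$. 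Thus $\beta$ represents an element of $\pi_{X}(\mu)$ lying within the disjointness bound of $\pi_{X}(\partial Y)$; combined with the bounded diameter of $\pi_{X}(\mu)$ this forces $d_{X}(Y,\mu)$ below $9$, contradicting the hypothesis. Consequently alternative (ii) never occurs, every component of $\mu\cap Y$ satisfies (i), and taking the diameter over all of them yields $d_{Y}(X,\mu)\le 4$.

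The step I expect to be the main obstacle is the verification in (ii) that the subarc $\beta$ is an \emph{honest} component of $\mu\cap X$ rather than a proper subarc of one, since only then does $\pi_{X}(\beta)$ genuinely contribute to $\pi_{X}(\mu)$. This requires arranging the geodesic representatives so that $\partial X\cap\partial Y=\emptyset$ and $\alpha\subseteq\mathrm{int}(Y)$ meets $\partial X$ transversally, guaranteeing that the endpoints of $\beta$ are interior points of $\alpha$ lying in $\mathrm{int}(Y)$ and hence off $\partial Y$. The secondary difficulty is purely quantitative: pinning down the single-simplex projection diameter and the disjointness bound sharply enough, in both the non-annular curve complex and the annular complex $C(A)$, so that the two appearances above remain at most $9$ on the hypothesis side and at most $4$ on the conclusion side. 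I would handle the annular case by running the identical dichotomy with the arcs replaced by their lifts to the annular cover $A_{S}$, where disjointness again yields a distance bound of the same order.
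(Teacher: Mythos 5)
You should first note that the paper contains no proof of this statement at all: Theorem \ref{BI} is quoted as a known result (Behrstock's inequality in Leininger's effective form), so the comparison below is against the standard Behrstock--Leininger argument rather than anything in the text. Your architecture is close to that argument, but it has a genuine gap at exactly the step you flagged, and your proposed repair does not work. The claim that alternative (ii) can never occur --- so that every component of $\mu\cap Y$ avoids $\partial X$ altogether --- is false. The problem is that a component $\beta$ of $\alpha\cap X$ is an honest component of $\mu\cap X$ only when both its endpoints are interior points of $\alpha$; it can instead happen that \emph{every} component of $\alpha\cap X$ contains an endpoint of $\alpha$. Concretely: since $X\pitchfork Y$ forces $\partial Y$ to cross $X$ essentially, some segment of $\partial Y$ lies in $\mathrm{int}(X)$, and an arc $\alpha$ of $\mu\cap Y$ may terminate on such a segment after crossing $\partial X$ exactly once. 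Then the unique component of $\alpha\cap X$ runs from $\partial X$ to $\partial Y$, is a proper subarc of a component of $\mu\cap X$ that itself crosses $\partial Y$, and so produces no element of $\pi_X(\mu)$ disjoint from $\partial Y$. No contradiction with $d_X(Y,\mu)>9$ arises, yet $\alpha\cap\partial X\neq\emptyset$, so your final disjointness step in $C(Y)$ breaks. Arranging $\partial X\cap\partial Y=\emptyset$ does not address this (the bad configuration is caused by $\partial Y\cap\mathrm{int}(X)\neq\emptyset$, which overlapping forces), and it is not even achievable in general: if $X$ and $Y$ are annular neighborhoods of curves with positive geometric intersection number, their geodesic boundaries must intersect.

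The standard proof closes this gap with one additional observation that your write-up is missing. In the bad case, $\alpha$ can meet $\partial X$ at most twice: any component of $\alpha\cap X$ avoiding the endpoints of $\alpha$ would be a genuine component of $\mu\cap X$ disjoint from $\partial Y$ (your step (ii), which is correct as far as it goes), and that is what the hypothesis $d_{X}(Y,\mu)>9$ rules out. Hence the hypothesis does not give disjointness of $\mu\cap Y$ from $\partial X$, but rather the weaker conclusion $i(\alpha,\partial X\cap Y)\le 2$ for every component $\alpha$ of $\mu\cap Y$, and the bound $d_{Y}(X,\mu)\le 4$ must then be extracted from a bounded-intersection estimate in $C(Y)$ (with the analogous count for lifts to the annular cover when $Y$ is an annulus), not from the disjointness bound. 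Your dichotomy should also be drawn on $\alpha\cap\partial X=\emptyset$ versus $\alpha\cap\partial X\neq\emptyset$ rather than on $\alpha\cap X$, since an arc entirely contained in $X\cap Y$ with endpoints on $\partial Y\cap\mathrm{int}(X)$ falls into your case (ii) but is harmless. With the ``at most two crossings'' step inserted and the constants rechecked against it, the proof goes through; without it, the stated conclusion constant $4$ is not established by your argument.
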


Theorem \ref{BI} plays a key role in the proofs of Theorem \ref{mainone} and Theorem \ref{main}.

%\begin{definition}[\cite{MM2}] 
%Let $x,y\in C(S).$ A multigeodesic is a sequence of simplexes $\{V _{i}\}_{i=0}^{d_{S}(x,y)}$ such that $d_{S}(a,b)=|p-q|$ for all $a\in V_{p},b\in V_{q}$ and $p,q$. A tight geodesic is a geodesic which arise from a multigeodesic such that $V _{i}$ is the boundary components of the regular neighbothood of $V _{i-1}$ and $V _{i+1}$ for all $i$.
%\end{definition}

The following is due to Masur--Minsky \cite{MM2}. In this paper we use the effective one due to Webb \cite{WEB1}. We do not use it in the proofs of Theorem \ref{mainone} and Theorem \ref{main}, but use it in Section \ref{app}.

\begin{theorem}[\cite{WEB1}] \label{BGIT}
Let $Z\subsetneq S$. Let $g$ be a geodesic in $C(S)$ such that every vertex of it projects nontrivially to $Z$. Then the diameter of $\pi_{Z}(g)$ is bounded by $\O\leq 100.$
\end{theorem}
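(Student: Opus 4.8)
The plan is to reduce the statement to a bound on $d_{Z}(v_{0},v_{n})$ for the endpoints and then to show that the projected sequence in $C(Z)$ cannot drift. First I would note that it suffices to bound $d_{Z}(v_{0},v_{n})$ uniformly over all geodesics $g$ meeting the hypothesis: for any two vertices $v_{i},v_{j}$, the subpath of $g$ between them is again a geodesic all of whose vertices project nontrivially to $Z$, so a uniform endpoint bound gives the same bound on $\mathrm{diam}\,\pi_{Z}(g)$. Next I would record the two elementary properties of projection. Since consecutive vertices $v_{i},v_{i+1}$ are disjoint curves both meeting $Z$, their projections satisfy $d_{Z}(v_{i},v_{i+1})\leq 2$; hence $\pi_{Z}(v_{0}),\dots,\pi_{Z}(v_{n})$ is a coarse path in $C(Z)$ and $\pi_{Z}$ is coarsely Lipschitz along $g$. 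By itself this only yields $d_{Z}(v_{0},v_{n})\leq 2n$, so the entire content of the theorem lies in ruling out linear drift of this path as $n$ grows.

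The heart of the argument is a contraction estimate driven by Theorem \ref{BI}. The idea is that a $C(S)$--geodesic realizing a large $Z$--projection must pass \emph{transversally past} $Z$ rather than linger near it, so the set of indices $i$ for which $\pi_{Z}(v_{i})$ is far from both endpoints is confined to a window of bounded length. Concretely, for a vertex $v_{i}$ whose $C(S)$--distance from the $Z$--relevant part of $g$ is large, an initial (resp. terminal) segment of $g$ fills a subsurface $Y$ overlapping $Z$ with $d_{Y}(Z,v_{i})$ large; Behrstock's inequality in the form $d_{Y}(Z,\mu)>9\Rightarrow d_{Z}(Y,\mu)\leq 4$ then pins $\pi_{Z}(v_{i})$ to within a bounded distance of $\pi_{Z}(\partial Y)$, which is coarsely $\pi_{Z}(v_{0})$ (resp. $\pi_{Z}(v_{n})$). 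This is precisely a \emph{no--backtracking} statement: once the geodesic has moved far from the vertices whose $Z$--projection is near $\pi_{Z}(v_{0})$, the overlap data forbid it from ever bringing its $Z$--coordinate back near $\pi_{Z}(v_{0})$. Combining this with the coarse--Lipschitz path of the first paragraph and with the (effective) hyperbolicity of $C(S)$, which guarantees that the geodesic makes definite progress and cannot dwell in a bounded neighborhood of the curves disjoint from $Z$, forces the window of \emph{active} indices to have bounded length, whence $d_{Z}(v_{0},v_{n})$ is bounded.

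Finally I would assemble the explicit constant. Every ingredient is effective: Theorem \ref{BI} supplies the thresholds $9$ and $4$, the disjointness estimate supplies the constant $2$, and the hyperbolicity constant of $C(S)$ may be imported from the effective unicorn--path bounds; tracking these through the window estimate gives a bound of the claimed order, and a careful accounting yields $B\leq 100$. I expect the main obstacle to be exactly the no--backtracking/contraction step of the second paragraph: the elementary Lipschitz bound is nowhere near enough, and the real difficulty is to upgrade Behrstock's single--subsurface inequality into the global statement that the $Z$--coordinate of the geodesic is visited monotonically and within a bounded window. This is also the step on which the final value of $B$ depends, so it is where one must keep the constants explicit throughout.
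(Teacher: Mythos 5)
This theorem is not proved in the paper at all: it is imported verbatim from Webb \cite{WEB1}, whose effective argument rests on uniform hyperbolicity of curve graphs via unicorn-path surgeries and explicit intersection-number estimates, not on Behrstock's inequality. So your route is not a variant of the source's proof, and unfortunately it also contains a genuine gap at exactly the step you flag as the ``heart.'' You posit that for a vertex $v_{i}$ far along $g$ there is a proper subsurface $Y$, filled by an initial (or terminal) segment of $g$, overlapping $Z$, with $d_{Y}(Z,v_{i})$ large, so that Theorem \ref{BI} pins $\pi_{Z}(v_{i})$ near $\pi_{Z}(\partial Y)$. This fails in two independent ways. First, any geodesic segment of $C(S)$ containing vertices $v_{j},v_{j+3}$ fills all of $S$: a curve $w$ disjoint from every vertex of the segment would give the shortcut $d_{S}(v_{j},v_{j+3})\leq d_{S}(v_{j},w)+d_{S}(w,v_{j+3})=2$. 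Hence the only candidate proper subsurfaces come from at most three consecutive vertices, and there is no control on whether such a $Y$ overlaps $Z$ rather than being disjoint from or nested with it. Second, even when a proper overlapping $Y$ exists, nothing in the hypotheses produces $d_{Y}(Z,v_{i})>9$; large distance in $C(S)$ does not create a large projection to any prescribed subsurface. So the hypothesis of Theorem \ref{BI} is never verified, the no-backtracking window is a placeholder rather than an argument, and the concluding ``careful accounting yields $B\leq 100$'' has nothing to account.

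Two further points. The elementary Lipschitz estimate is slightly off: for disjoint curves both cutting a non-annular $Z$ the standard bound is $d_{Z}(v_{i},v_{i+1})\leq 4$ (each projection has diameter at most $2$), not $2$; this does not affect the structure but matters in a paper whose whole point is explicit constants. More substantively, be aware that the logical traffic between the two statements historically runs the other way: Behrstock's inequality is the softer fact (Leininger's elementary proof gives the $9/4$ version quoted as Theorem \ref{BI}), and no known proof derives the bounded geodesic image theorem from it alone; the known proofs (Masur--Minsky via hyperbolicity of $C(S)$, Webb's uniform effective version) each supply an extra contraction mechanism that your sketch would need to build from scratch. As written, the proposal reduces to ``BGIT holds because the projection cannot drift,'' which is a restatement of the theorem rather than a proof of it.
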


\begin{remark}\label{proofbybgit}
In \cite{W3}, Theorem \ref{BGIT} played a key role in the proofs of Theorem \ref{igd} and Theorem \ref{mainone}. Theorem \ref{BGIT} says that if there is a large subsurface projection then the boundary components of the subsurface appears in $1$-neighborhood of a geodesic. It seems natural to use Theorem \ref{BGIT} to obtain the main coarse equality because we can detect all subsurfaces with large subsurface projection distances and it is easy to control them as their boundary components appear near a geodesic. In fact, if $\xi(S)=1$, their boundary components appear exactly on a geodesic, which is stronger information used to prove Theorem \ref{mainone} in \cite{W3}. However, if $\xi(S)>1$, we do not have this. Moreover, we may get many subsurfaces which miss a common vertex on a geodesic; they may be disjoint, nested, or may overlap. %This phenomena caused troubles applying the argument given in the proof of Theorem \ref{main} for $\xi(S)=1$ to higher complexity surfaces in \cite{W3} . 
We give a new approach to prove Theorem \ref{mainone} using Theorem \ref{BI}. This approach applies to prove Theorem \ref{main}.
\end{remark}

\section{The main coarse relationship}\label{maintheorem}
The proof for $\xi(S)=1$ easily applies to the proof for $\xi(S)>1$ except for the complexity in dealing with more varieties of subsurfaces appearing in the coarse equality. We resolve this issue by Lemma \ref{partition}. We emphasize that the main idea between the case of $\xi(S)=1$ and the case of $\xi(S)>1$ is the same, so some parts in the proof for $\xi(S)>1$ is omitted.

\begin{notation}
Let $x,y\in C(S)$. Let $n>0$. We let $\Z(x,y,n)$ denote $\{Z\subsetneq S| d_{Z}(x,y)> n \}.$ 
\end{notation}

\begin{lemma}\label{annulus}
Let $x,y\in C(S)$. Let $A \in \Z(x,y,n)$ such that $A$ is an annulus. If $n\geq 6$, then $$\frac{\log d_{A}(x,y)}{\au_{n+1}} +\log i(x,A)+\log i(A,y)\leq \log i(x,y)$$ where $\au_{p}= \frac{\log p}{\log (p-5)}$ (See Figure \ref{three}.).

\end{lemma}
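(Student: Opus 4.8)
The plan is to reduce the statement to the clean, scale-free inequality
\[
i(x,y)\ \ge\ \big(d_{A}(x,y)-5\big)\cdot i(x,A)\cdot i(A,y),
\]
and then to absorb the ``$-5$'' into the exponent $1/\au_{n+1}$ by an elementary monotonicity argument. Writing $a$ for the core of $A$, recall that $i(x,A)=i(x,a)$ and $i(A,y)=i(y,a)$, so the display reads $\log i(x,y)\ge \log(d_{A}(x,y)-5)+\log i(x,a)+\log i(y,a)$. Since $A\in\Z(x,y,n)$ with $n\ge 6$, the distance $d_{A}(x,y)$ is an integer $\ge n+1\ge 7$; in particular $d_{A}(x,y)-5\ge 2$, so $\log(d_{A}(x,y)-5)$ is a positive real. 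It then suffices to check $\frac{\log d_{A}(x,y)}{\au_{n+1}}\le \log(d_{A}(x,y)-5)$, i.e. $\frac{\log(n-4)}{\log(n+1)}\le\frac{\log(d_{A}(x,y)-5)}{\log d_{A}(x,y)}$; this holds because $m\mapsto \log(m-5)/\log m$ is increasing for $m\ge 7$ and $d_{A}(x,y)\ge n+1$. This is exactly why the hypothesis is $n\ge 6$: it forces $n-4>1$, so that $\au_{n+1}$ is defined and positive.

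To prove the scale-free inequality I would pass to the annular cover $A_{S}$. The essential arcs comprising $\pi_{A}(x)$ and $\pi_{A}(y)$ are in bijection with the points of $x\cap a$ and of $y\cap a$, so there are $p:=i(x,a)$ of the former, say $\tilde x_{1},\dots,\tilde x_{p}$, and $q:=i(y,a)$ of the latter, say $\tilde y_{1},\dots,\tilde y_{q}$; moreover each family is pairwise disjoint because $x$ and $y$ are simple. Using the standard identification $d_{C(A)}(\alpha,\beta)=1+i(\alpha,\beta)$ for essential arcs, one has $\max_{i,j} i(\tilde x_{i},\tilde y_{j})=d_{A}(x,y)-1$. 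The key geometric input is that, because the $\tilde x_{i}$ are disjoint (hence pairwise differ in winding by at most $1$) and likewise the $\tilde y_{j}$, the numbers $i(\tilde x_{i},\tilde y_{j})$ all lie within a fixed additive constant of one another; a careful bookkeeping of these winding comparisons, together with the $+1$ offset above, shows the spread is at most $4$, so that \emph{every} pair satisfies $i(\tilde x_{i},\tilde y_{j})\ge d_{A}(x,y)-5$. Hence $A_{S}$ contains at least $p\cdot q\cdot(d_{A}(x,y)-5)=i(x,a)\,i(y,a)\,(d_{A}(x,y)-5)$ intersection points lying on essential arcs of both curves.

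It remains to transfer this count downstairs: each such intersection point of $A_{S}$ projects under the covering map to a point of $x\cap y$, and I must show this assignment is injective, i.e. that no $P\in x\cap y$ receives two such preimages. Unwinding the covering, the number of preimages of $P$ lying on essential arcs of both curves equals the number of lifts $\ell$ of $a$ in the universal cover that cross \emph{both} of the lifts $\hat x_{0},\hat y_{0}$ of $x,y$ through a common lift of $P$. I expect this injectivity --- ruling out two disjoint, ``nested'' lifts of $a$ simultaneously crossing $\hat x_{0}$ and $\hat y_{0}$ --- to be the main obstacle, and I would establish it using that $a,x,y$ are simple and in minimal geodesic position, so that the relevant lifts are crossing-minimal and no such nesting survives. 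Granting injectivity gives $i(x,y)\ge i(x,a)\,i(y,a)\,(d_{A}(x,y)-5)$, and combining with the monotonicity reduction of the first paragraph yields the lemma. Finally, I note that the tightness of the constant $5$, which matches the ``$-5$'' in $\au_{p}=\log p/\log(p-5)$, leaves no slack to tolerate a multiplicity larger than one; this is precisely why the injective push-down, rather than the winding estimate, is the crux of the argument.
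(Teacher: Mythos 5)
Your opening reduction (absorbing the $-5$ into the exponent via monotonicity of $m\mapsto\log(m-5)/\log m$) matches the paper's bookkeeping, and your upstairs ``spread'' estimate is sound --- indeed the triangle inequality in $C(A)$, with $d(\alpha,\beta)=1+i(\alpha,\beta)$ and consecutive swaps of disjoint lifts costing $1$ each, already gives $i(\tilde x_i,\tilde y_j)\ge d_A(x,y)-3$ for every essential pair. The gap is exactly where you locate it, and it cannot be filled as you propose: the push-down from the annular cover is \emph{not} injective, even for geodesic representatives. As you correctly reformulate, the multiplicity with which a point $P\in x\cap y$ is counted equals the number of lifts of $a$ in $\mathbb{H}^2$ crossing both $\hat x_0$ and $\hat y_0$, and this number is at least $2$ whenever $x$ and $y$ fellow-travel into the annulus on both sides of $P$. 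Concretely, take $b$ disjoint from $a$ with $i(x,b)=1$ and let $y$ be the geodesic representative of $T_a^nT_b(x)$: at the intersection point $P$ created near $b$, the strands of $x$ and $y$ leaving $P$ in either direction run along homotopic paths until after their first crossing of $a$, so the first lift of $a$ met by $\hat x_0$ in the forward direction is also crossed by $\hat y_0$, and likewise in the backward direction --- two disjoint, nested lifts of $a$ crossing both lines, for arbitrarily large $n$ and hence arbitrarily large $d_A(x,y)$. Minimal geodesic position cannot rule this out: distinct geodesics in $\mathbb{H}^2$ cross at most once, so the nested configuration creates no bigon and is geodesically realizable; your hoped-for ``no such nesting survives'' is simply false. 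Consequently $\sum_{i,j} i(\tilde x_i,\tilde y_j)\le i(x,y)$ fails in general, and, as you yourself note, the constant $5$ leaves no slack to absorb multiplicity.

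The paper avoids this issue by never pushing the cover count down. It invokes a result of Aougab--Biringer--Gaster: there is a choice of hyperbolic metric on $S$ for which $i_{S_A}(x',y')\le i_A(a,b)+2$, where $x',y'$ are the lifts arising from arcs $a\in\{x\cap A\}$, $b\in\{y\cap A\}$ and $i_A$ is the intersection number \emph{inside the compact annulus} $A\subset S$. Combined with $d_A(x,y)\le i_{S_A}(x',y')+3$, this yields $i_A(a,b)\ge d_A(x,y)-5$ for \emph{every} pair of arcs, so the required crossings are located downstairs in $A$ itself, where intersections of distinct arc pairs are literally distinct points of $x\cap y$ and the count $\sum_{a,b} i_A(a,b)\le i(x,y)$ is trivially injective. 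In other words, the paper puts the geometric difficulty into choosing a metric that prevents arcs from picking up spurious winding in the cover, precisely the phenomenon your counterexample exhibits; some substitute for that input is indispensable, and without it your argument proves the per-pair estimate in the wrong place (the cover rather than the annulus) and the transfer step collapses.
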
 
\begin{proof}
In \cite{ABG}, they show the existence of a hyperbolic metric on $S$ so that if $a\in  \{x \cap A\}$ and $b\in  \{y \cap A\}$ give rise to $x'$ and $y'$ respectively via $\pi_{A}$, then $ i_{S_{A}}(x',y')\leq i_{A}(a,b)+2$ where $i_{D}$ denotes the geometric intersection number restricted in a domain $D$. Since $d_{A}(x,y)\leq i_{S_{A}}(x',y')+3$, we have $$d_{A}(x,y)-5\leq i_{A}(a,b)$$ for all $a\in  \{x \cap A\}$ and $b\in  \{y \cap A\}$. 
Clearly, $$\log d_{A}(x,y) \leq \frac{\log d_{A}(x,y)}{\log (d_{A}(x,y)-5)} \log i_{A}(a,b)$$ for all $a\in  \{x \cap A\}$ and $b\in  \{y \cap A\}$. 

By defining the following decreasing function $$\au_{p}= \frac{\log p}{\log (p-5)},$$ we have 
$$\log d_{A}(x,y)\leq \au_{n+1}  \log i_{A}(a,b)$$for all $a\in  \{x \cap A\}$ and $b\in  \{y \cap A\}$.

\begin{figure}[htbp]
 \begin{center}
  \includegraphics[width=118mm]{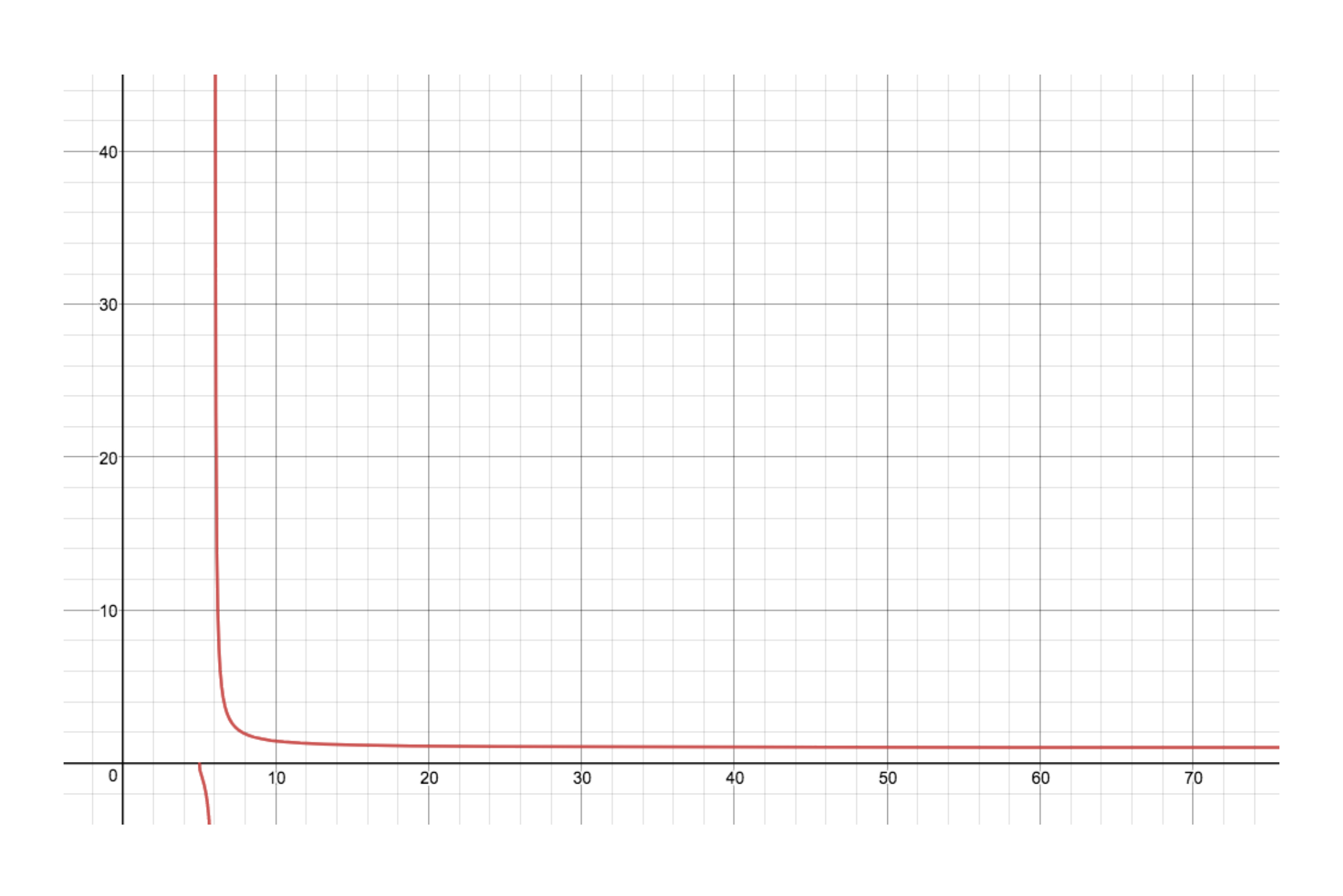}
 \end{center}
 \caption{$\au_{p}= \frac{\log p}{\log (p-5)}$}
 \label{three}
\end{figure}

%We have $\frac{d_{Z}(x,y)-8}{2}\leq \log (4 i_{Z}(a,b)+4)$ $$2^{\frac{d_{Z}(x,y)-8}{2}}\leq 4 i_{Z}(a,b)+4$$ $$\frac{2^{\frac{d_{Z}(x,y)-8}{2}}}{4}-1= 2^{\frac{d_{Z}(x,y)-12}{2}}-1  \leq  i_{Z}(a,b)$$

Since $$\sum_{a\in  \{x \cap A\}, b\in  \{y \cap A\}}i_{A}(a,b)\leq i(x,y),$$ by using the fact that $\frac{\log d_{A}(x,y)}{\au_{n+1} }\leq  \log i_{A}(a,b)$ for all $a\in  \{x \cap A\}$ and $b\in  \{y \cap A\}$, we are done.

%we have $$i_{A}(a,b)\cdot i(x,A)\cdot i(A,y)\leq i(x,y).$$ Clearly, $$d_{A}(x,y) ^{\frac{1}{\log_{d_{A}(x,y)-5} d_{A}(x,y) }}\leq d_{A}(x,y)-5. $$

%Since $$\sum_{a\in  \{x \cap A\}, b\in  \{y \cap A\}}i_{A}(a,b)\leq i(x,y),$$ we have $$(d_{A}(x,y)-5)\cdot i(x,A)\cdot i(A,y)\leq i(x,y).$$ Clearly, $$d_{A}(x,y) ^{\frac{1}{\log_{d_{A}(x,y)-5} d_{A}(x,y) }}\leq d_{A}(x,y)-5. $$

\end{proof}

%If $a\in  \{x \cap Z\}$ and $b\in  \{y \cap Z\}$ give rise to $x'$ and $y'$ respectively via $\pi_{Z}$, we observe 

\subsection{The proof for $\xi(S)=1$}
Throughout, we take an advantage of the fact that any two proper subsurfaces of $S$ overlap when $\xi(S)=1$. In particular, we use Theorem \ref{BI} often.

\begin{notation}\label{notationx}
Suppose $\x(S)=1$. Let $x,y\in C(S)$. Let $A_{p},A_{q}\in \Z(x,y,n)$. %such that they overlap.
We let $A_{p} \pro A_{q}$ denote the following relation: $$\frac{\log d_{A_{q}}(x,y)}{2\au_{\lceil \frac{n+1}{2} \rceil}}+\log i(x,A_{q})\leq \log i(x,A_{p}).$$
\end{notation}

We have 
\begin{lemma}\label{replaceannulus}
Suppose $\x(S)=1$. Let $x,y\in C(S)$. If $n\geq 18$, then $A_{q} \pro A_{p}$ or $A_{p} \pro A_{q}$ for all $A_{p} ,A_{q}\in \Z(x,y,n)$. 
\end{lemma}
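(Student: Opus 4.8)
The plan is to reduce everything to the computation already carried out in Lemma \ref{annulus}, performed inside one of the two annuli with a boundary curve playing the role of $y$. First note that because $\x(S)=1$ the two subsurfaces overlap, $A_{p}\pitchfork A_{q}$, so $\partial A_{p}$ crosses $A_{q}$ essentially and conversely; moreover $A_{p},A_{q}\in\Z(x,y,n)$ gives $d_{A_{p}}(x,y)>n$ and $d_{A_{q}}(x,y)>n\geq 18$. The starting move is the triangle inequality in $C(A_{q})$: at least one of $d_{A_{q}}(x,A_{p})$ or $d_{A_{q}}(A_{p},y)$ is $\geq\tfrac12 d_{A_{q}}(x,y)>9$. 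This dichotomy is the source both of the factor $2$ and of the shift of the argument of $\au$ from $n+1$ down to $\lceil\frac{n+1}{2}\rceil$ in the statement, and the hypothesis $n\geq 18$ is exactly what forces the larger half to exceed $9$, so that Theorem \ref{BI} is available in the harder case.

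Suppose first $d_{A_{q}}(x,A_{p})\geq\tfrac12 d_{A_{q}}(x,y)\geq\lceil\frac{n+1}{2}\rceil$. I would rerun Lemma \ref{annulus} inside $A_{q}$ with the pair $(x,\partial A_{p})$ in place of $(x,y)$: for every arc $a\in\{x\cap A_{q}\}$ and $a'\in\{\partial A_{p}\cap A_{q}\}$ one has $d_{A_{q}}(x,A_{p})-5\leq i_{A_{q}}(a,a')$, whence $\frac{\log d_{A_{q}}(x,A_{p})}{\au_{\lceil\frac{n+1}{2}\rceil}}\leq \log i_{A_{q}}(a,a')$ by monotonicity of $\au_{p}$. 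Since $\sum_{a,a'}i_{A_{q}}(a,a')\leq i(x,A_{p})$ while the sum ranges over $i(x,A_{q})\cdot i(A_{p},A_{q})\geq i(x,A_{q})$ arc-pairs, exponentiating and taking logarithms turns the arc-count into a multiplicative factor and gives
$$\log i(x,A_{q})+\frac{\log d_{A_{q}}(x,A_{p})}{\au_{\lceil\frac{n+1}{2}\rceil}}\leq \log i(x,A_{p}).$$
Finally $d_{A_{q}}(x,A_{p})\geq\tfrac12 d_{A_{q}}(x,y)$ together with $d_{A_{q}}(x,y)\geq 4$ yields $\frac{\log d_{A_{q}}(x,y)}{2\au_{\lceil\frac{n+1}{2}\rceil}}\leq \frac{\log d_{A_{q}}(x,A_{p})}{\au_{\lceil\frac{n+1}{2}\rceil}}$, which is precisely $A_{p}\pro A_{q}$.

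In the remaining case the large half is $d_{A_{q}}(A_{p},y)>9$. Here I would feed this into Theorem \ref{BI} with $X=A_{q}$, $Y=A_{p}$, $\mu=y$ to get $d_{A_{p}}(A_{q},y)\leq 4$; the triangle inequality in $C(A_{p})$ then promotes this to $d_{A_{p}}(x,A_{q})\geq d_{A_{p}}(x,y)-4\geq\tfrac12 d_{A_{p}}(x,y)\geq\lceil\frac{n+1}{2}\rceil$. This places me back in the situation of the first case with the roles of $p$ and $q$ interchanged, and the identical Lemma \ref{annulus} computation inside $A_{p}$ with the pair $(x,\partial A_{q})$ produces $\log i(x,A_{p})+\frac{\log d_{A_{p}}(x,y)}{2\au_{\lceil\frac{n+1}{2}\rceil}}\leq \log i(x,A_{q})$, i.e. $A_{q}\pro A_{p}$. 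Thus the two annuli are comparable in every case.

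The step I expect to be most delicate is the conversion of the number of crossing arcs into the multiplicative factor $i(x,A_{q})$: one must verify, exactly as in the $\sum_{a,b}i_{A}(a,b)\leq i(x,y)$ bookkeeping of Lemma \ref{annulus}, that the in-annulus intersections of $x$ and $\partial A_{p}$ summed over all arc-pairs are bounded above by the total $i(x,A_{p})$, and that each pair contributes at least the common per-arc twisting $d_{A_{q}}(x,A_{p})-5$. The other point requiring care is purely numerical, namely checking that the halving costs only the factor $2$ and the argument $\lceil\frac{n+1}{2}\rceil$, and that $n\geq 18$ simultaneously keeps $\au_{\lceil\frac{n+1}{2}\rceil}$ in its valid range and makes Theorem \ref{BI} applicable in the second case; the use of Behrstock's inequality there is what lets me avoid determining a priori which of $x,y$ sits close to $\partial A_{p}$ in $A_{q}$.
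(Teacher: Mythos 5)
Your proposal is correct and follows essentially the same route as the paper's proof: the same triangle-inequality dichotomy (the paper states it in $C(A_{p})$ rather than $C(A_{q})$, which merely swaps the roles of $p$ and $q$), the same rerun of Lemma \ref{annulus} inside one annulus with the boundary of the other playing the role of $y$ (discarding the nonnegative term $\log i(A_{p},A_{q})$), and the same use of Theorem \ref{BI} in the second case followed by the triangle inequality to recover a large projection of $x$ against the other annulus. Your numerical checks, namely $\frac{\log d}{2}\leq \log d-1$ for $d\geq 4$ in the first case and implicitly $\frac{\log d}{2}\leq \log (d-4)$ for $d>18$ in the second, are exactly the estimates the paper performs.
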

\begin{proof}
We have $d_{A_{p}}(x,y)\leq d_{A_{p}}(x,A_{q})+d_{A_{p}}(A_{q},y),$ so $\frac{d_{A_{p}}(x,y)}{2}\leq  d_{A_{p}}(x,A_{q})$ or $\frac{d_{A_{p}}(x,y)}{2}\leq  d_{A_{p}}(A_{q},y)$.

If $\frac{d_{A_{p}}(x,y)}{2}\leq  d_{A_{p}}(x,A_{q})$, then $6< \lceil \frac{n+1}{2} \rceil\leq d_{A_{p}}(x,A_{q})$. By Lemma \ref{annulus} we have $$\frac{\log d_{A_{p}}(x,A_{q})}{\au_{\lceil \frac{n+1}{2} \rceil}  }+\log i(x,A_{p})+\log i(A_{p}, A_{q}) \leq \log i(x,A_{q}).$$ Since $\frac{\log d_{A_{p}}(x,y)}{2}\leq \log d_{A_{p}}(x,y) -1\leq \log d_{A_{p}}(x,A_{q}),$ we have $$A_{q} \pro A_{p} .$$

If $\frac{d_{A_{p}}(x,y)}{2}\leq  d_{A_{p}}(A_{q},y)$, then $9<  d_{A_{p}}(A_{q},y)$. By Theorem \ref{BI} we have $ d_{A_{q}}(A_{p},y)\leq 4.$ Since $d_{A_{q}}(x,y)\leq d_{A_{q}}(x,A_{p})+d_{A_{q}}(A_{p},y)$, we have $6< \lceil \frac{n+1}{2} \rceil <d_{A_{q}}(x,y) -4 \leq d_{A_{q}}(x,A_{p})$. By Lemma \ref{annulus} we have $$\frac{\log d_{A_{q}}(x,A_{p})}{\au_{\lceil \frac{n+1}{2} \rceil}  }+\log i(x,A_{q})+\log i(A_{q}, A_{p}) \leq \log i(x,A_{p}).$$
Since $\frac{\log d_{A_{q}}(x,y)}{2}\leq  \log (d_{A_{q}}(x,y)-4)  \leq    \log d_{A_{q}}(x,A_{p}),$ we have $$A_{p} \pro A_{q} .$$

\end{proof}

The following is the heart of the proof of Theorem \ref{mainannulus}. 

\begin{theorem}\label{overlappingannulus}
Suppose $\x(S)=1$. Let $x,y\in C(S)$. For all $n\geq18$, we have $$\frac{\sum_{A \in \Z(x,y,n)} \log d_{A} (x,y)}{2\au_{\lceil \frac{n+1}{2} \rceil}} \leq \log i(x,y).$$
\end{theorem}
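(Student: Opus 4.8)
The plan is to leverage the ordering relation $\pro$ established in Lemma \ref{replaceannulus}, which shows that any two annuli in $\Z(x,y,n)$ are comparable under $\pro$. First I would argue that $\pro$ gives a total preorder on $\Z(x,y,n)$, so I can list the annuli as $A_{1}, A_{2}, \ldots, A_{m}$ in such a way that $A_{j+1} \pro A_{j}$ holds for every consecutive pair, i.e. the quantities $\log i(x, A_{j})$ are (coarsely) increasing as $j$ decreases. The point of the relation $A_{q} \pro A_{p}$, unwound from Notation \ref{notationx}, is precisely that it relates $\log d_{A_{q}}(x,y)$ to a \emph{gain} in the intersection bookkeeping quantity $\log i(x, A_{p})$ relative to $\log i(x, A_{q})$. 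This is the structural fact that makes a telescoping argument possible.

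The key step is then to telescope. For each consecutive pair with $A_{j+1} \pro A_{j}$, Notation \ref{notationx} gives
$$\frac{\log d_{A_{j+1}}(x,y)}{2\au_{\lceil \frac{n+1}{2} \rceil}} \leq \log i(x, A_{j}) - \log i(x, A_{j+1}).$$
Summing over $j$ from $1$ to $m-1$, the right-hand side telescopes to $\log i(x, A_{1}) - \log i(x, A_{m})$, which is bounded by $\log i(x, A_{1}) \leq \log i(x,y)$ since $i(x, A_{1}) \leq i(x,y)$ (any intersection of $x$ with a single subsurface boundary is at most the total intersection number, as $A_1$ is a proper subsurface whose boundary $x$ must cross). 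The missing term is the top element $A_{m}$ of the ordering, whose $\log d_{A_{m}}(x,y)$ does not appear on the left of the telescoped sum; I would handle it by applying Lemma \ref{annulus} directly to $A_{m}$, which gives $\frac{\log d_{A_{m}}(x,y)}{2\au_{\lceil \frac{n+1}{2}\rceil}} \leq \frac{\log d_{A_{m}}(x,y)}{\au_{n+1}} \leq \log i(x,y)$ (using that $\au_{\lceil\frac{n+1}{2}\rceil} \geq \au_{n+1}$ on the relevant range since $\au$ is decreasing), absorbing this single extra term into the bound.

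The main obstacle I anticipate is bookkeeping the constants so the telescope and the leftover term combine into the clean factor $2\au_{\lceil\frac{n+1}{2}\rceil}$ without an extra multiplicative loss, since naively adding a separately-bounded term for $A_m$ would double the constant. The resolution is likely to choose $A_m$ to be the $\pro$-\emph{maximal} element, so that $\log i(x, A_m)$ is the largest among all annuli; then in the telescoped sum the $-\log i(x,A_m)$ term can be discarded (it is negative), and $A_m$ itself is the annulus for which the direct Lemma \ref{annulus} bound is sharpest. I expect the argument needs the hypothesis $n \geq 18$ exactly so that Lemma \ref{replaceannulus} applies to guarantee comparability and so that $\lceil\frac{n+1}{2}\rceil > 6$ keeps $\au_{\lceil\frac{n+1}{2}\rceil}$ well-defined and bounded. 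A secondary subtlety is verifying that $i(x, A_j) \geq 1$ for every annulus counted, so that every $\log i(x,A_j)$ is nonnegative and the telescoping inequalities are all valid simultaneously; this follows because an annulus in $\Z(x,y,n)$ must be crossed by $x$, forcing positive intersection with its core.
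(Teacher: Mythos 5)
Your skeleton---linearize $\Z(x,y,n)$ into a chain via Lemma \ref{replaceannulus} and telescope the inequalities of Notation \ref{notationx}---is the same mechanism as the paper's proof (the paper phrases the telescoping as iterated substitution into a single inequality, and builds the chain by an insertion argument, essentially the fact that every finite tournament has a Hamiltonian path, rather than by asserting a total preorder; transitivity of $\pro$ does in fact hold here, since the $\log d$ terms are strictly positive, but you never verify it). The genuine gap is in your final assembly. You bound the telescoped sum by $\log i(x,A_{1})\leq \log i(x,y)$ and then \emph{separately} bound the leftover term of the chain's maximal element by $\log i(x,y)$ via Lemma \ref{annulus}; adding these two bounds yields only $\sum_{A\in\Z(x,y,n)}\log d_{A}(x,y)\leq 4\au_{\lceil\frac{n+1}{2}\rceil}\cdot\log i(x,y)$, i.e.\ the theorem with the constant doubled, exactly the loss you anticipated. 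Your proposed repair (take the leftover element to be $\pro$-maximal and discard the negative telescoped term) does not remove the doubling: you are still summing two quantities each of which is bounded by $\log i(x,y)$. The paper's resolution is different and is the essential point: apply Lemma \ref{annulus} \emph{once}, to the maximal element of the chain, and keep that inequality intact---its left-hand side simultaneously contains the maximal element's term $\frac{\log d_{A_{1}}(x,y)}{\au_{n+1}}\geq \frac{\log d_{A_{1}}(x,y)}{2\au_{\lceil\frac{n+1}{2}\rceil}}$ \emph{and} the budget $\log i(x,A_{1})$---and then substitute the telescoped chain into the budget term inside that one inequality (this is the paper's $(\dagger)$). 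Assembled this way, nothing is counted against $\log i(x,y)$ twice, and no separate bound on $\log i(x,A_{1})$ is ever needed.

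A second defect: your justification that $i(x,A_{1})\leq i(x,y)$ (``any intersection of $x$ with a single subsurface boundary is at most the total intersection number'') is false as stated; for an arbitrary annulus $A$, the quantity $i(x,\partial A)$ bears no general relation to $i(x,y)$. The inequality is true here only because $A_{1}\in\Z(x,y,n)$, and its proof is Lemma \ref{annulus} itself (the remaining terms on its left side are nonnegative under the paper's $\log 0=0$ convention); in the corrected single-inequality assembly it is not needed at all. Two smaller points: your indices are internally inconsistent---the displayed telescoping inequality encodes $A_{j}\pro A_{j+1}$, not $A_{j+1}\pro A_{j}$, and the element missing from the telescoped sum is the chain's maximum $A_{1}$, not $A_{m}$---and note that the paper's written proof constructs the chain only under a simplifying assumption, deferring the general case to two chains rooted at $A_{1}$ via the dual relation $\proy$ (Remark \ref{fory}); since Lemma \ref{replaceannulus} always returns an $x$-side comparison for every pair, your single-chain setup can legitimately bypass that bookkeeping, which would be a modest simplification once the assembly above is fixed.
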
 
\begin{proof}
%Since $\x(S)=1$, subsurfaces in $\Z(x,y,n)$ pairwise overlap. 
%We prove a stronger statement: $$\frac{\sum_{A_{i} \in \Z(x,y,n)} \log d_{A_{i}} (x,y)}{4} +\log i(x,A_{s})+\log i(A_{t},y)\leq \log i(x,y)$$ for some $A_{s},A_{t}\in \Z(x,y,n).$

We prove the above by an induction on $|\Z(x,y,n)|.$ Suppose $\Z(x,y,n)=\{A_{i}\}_{i=1}^{N}.$
Pick $A_{1}\in \Z(x,y,n)$. By Lemma \ref{annulus}, we have $$\frac{\log d_{A_{1}}(x,y)}{\au_{n+1}} +\log i(x,A_{1})+\log i(A_{1},y)\leq \log i(x,y).$$ 
Since $\au_{n+1}\leq 2\au_{\lceil \frac{n+1}{2} \rceil}$, we have $$(\dagger): \frac{\log d_{A_{1}}(x,y)}{2\au_{\lceil \frac{n+1}{2} \rceil}} +\log i(x,A_{1})+\log i(A_{1},y)\leq \log i(x,y).$$ 
We have $d_{A_{i}}(x,y)\leq d_{A_{i}}(x,A_{1})+ d_{A_{i}}(A_{1},y)$ for all $i.$ Hence we have $\frac{d_{A_{i}}(x,y)}{2}\leq d_{A_{i}}(x,A_{1})$ or $\frac{d_{A_{i}}(x,y)}{2}\leq d_{A_{i}}(A_{1},y)$ for all $i$. We assume $\frac{d_{A_{i}}(x,y)}{2}\leq d_{A_{i}}(x,A_{1})$ for all $i$, then, by Lemma \ref{replaceannulus}, we have $A_{1}\pro A_{i}$ for all $i$. (The argument given with this assumption is enough. Throughout, we keep $\log i(A_{1},y)$ in $(\dagger)$ unchanged. See Remark \ref{fory}.) 
It suffices to show $$A_{1}\pro A_{2}\pro \cdots \pro A_{N}$$ after re-indexing the elements of $\Z(x,y,n)$ fixing the original $A_{1}$. %The base case is obvious. 
%after re-indexing $\{A_{i}\}_{i>1}^{N}$ fixing the original $A_{1}$. %The base case is obvious. 

Pick $N-1$ elements from $\Z(x,y,n)$ including $A_{1}$ and re-index those fixing the original $A_{1}$ so that we have $$A_{1}\pro A_{2}\pro \cdots \pro A_{N-1}.$$ 
We let $A_{N}$ re-denote the unpicked element. By Lemma \ref{replaceannulus}, we have $A_{N-1}\pro A_{N}$ or $A_{N}\pro A_{N-1}$. In the former case, we are done. In the latter case, we proceed to the same argument;  by Lemma \ref{replaceannulus} we have $A_{N-2}\pro A_{N}$ or $A_{N}\pro A_{N-2}$. In the former case, we are done. In the latter case, we proceed to the same argument. We iterate this process, which eventually terminates since $A_{1}\pro A_{N}.$ 
\end{proof}

\begin{remark}\label{fory}
In the proof of Theorem \ref{overlappingannulus}, we assumed that $\frac{d_{A_{i}}(x,y)}{2}\leq d_{A_{i}}(x,A_{1})$ for all $i$ and constructed a chain which starts from $A_{1}$ via the binary relation, $\pro$, to replace $\log i(x,A_{1})$ in $(\dagger)$. Throughout the process, we left $\log i(A_{1},y)$ in $(\dagger)$ unchanged. However, in general, we may not have $\frac{d_{A_{i}}(x,y)}{2}\leq d_{A_{i}}(x,A_{1})$ for all $i$. For this, we consider the dual binary relation, $\proy$, given by swapping $x$ and $y$ in the inequality in Notation \ref{notationx}. Lemma \ref{replaceannulus} holds for $\proy$. %With this, the same argument given in Theorem \ref{overlappingannulus} applies to $\{A_{i}\}$ such that $\frac{d_{A_{i}}(x,y)}{2}\leq d_{A_{i}}(A_{1},y)$ to construct a chain which starts from $A_{1}$ via the binary relation, $\proy$, to replace $\log i(A_{1},y)$ in $(\dagger)$.
Therefore, in general, after we pick $A_{1}$ and obtain $(\dagger)$, we partition $\{A_{i}\}_{i>1}^{N}$ into two collections, depending on $\frac{d_{A_{i}}(x,y)}{2}\leq d_{A_{i}}(x,A_{1})$ or $\frac{d_{A_{i}}(x,y)}{2}\leq d_{A_{i}}(A_{1},y)$. Then, in each collection, we separately run the same argument given in Theorem \ref{overlappingannulus} to construct an appropriate chain which starts from $A_{1}$, i.e., the former one via $\pro$ that serves to replace $\log i(x,A_{1})$ in $(\dagger)$ and the latter one via $\proy$ that serves to replace $\log i(A_{1},y)$ in $(\dagger)$. 
\end{remark}

\begin{theorem}\label{mainannulus}
Suppose $\x(S)=1$. Let $x,y\in C(S)$. For all $k\geq 18$, we have $$ \frac{ \sum_{Z\subseteq S}[ d_{Z}(x,y)]_{k}+\sum_{A\subseteq S} \log [d_{A}(x,y)]_{k}}{2\au_{\lceil \frac{k+1}{2} \rceil}+1}  -\frac{1}{2\au_{\lceil \frac{k+1}{2} \rceil}+1} \leq \log i(x,y).$$
\end{theorem}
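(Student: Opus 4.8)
The plan is to reduce everything to the annular case already handled by Theorem \ref{overlappingannulus} plus a single sharp bound on the top-level term. First I would exploit the hypothesis $\x(S)=1$: a proper essential non-annular subsurface of $S$ would have complexity strictly less than $\x(S)=1$, hence non-positive complexity, so it supports no curve-graph projection. Thus the only non-annular subsurface contributing to $\sum_{Z\subseteq S}[d_{Z}(x,y)]_{k}$ is $Z=S$ itself, giving $\sum_{Z\subseteq S}[d_{Z}(x,y)]_{k}=[d_{S}(x,y)]_{k}$; and every member of $\Z(x,y,k)$ is an annulus, so $\sum_{A\subseteq S}\log[d_{A}(x,y)]_{k}=\sum_{A\in\Z(x,y,k)}\log d_{A}(x,y)$.

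Next I would record the two inequalities to be summed. Writing $L=2\au_{\lceil \frac{k+1}{2} \rceil}$, Theorem \ref{overlappingannulus} with $n=k\geq 18$ gives $\sum_{A\subseteq S}\log[d_{A}(x,y)]_{k}\leq L\cdot \log i(x,y)$, which disposes of the annular contribution. For the single non-annular term I would establish the estimate $[d_{S}(x,y)]_{k}\leq \log i(x,y)+1$: when $d_{S}(x,y)\leq k$ the left-hand side is $0$ and there is nothing to prove, so the content is the inequality $d_{S}(x,y)\leq \log i(x,y)+1$ in the range $d_{S}(x,y)>k$, equivalently $i(x,y)\geq 2^{\,d_{S}(x,y)-1}$.

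This last estimate is what I expect to be the main obstacle, and it is the one genuinely new ingredient. The Hempel--Lickorish inequality only yields $d_{S}(x,y)\leq 2\log i(x,y)+2$, and the factor $2$ there is exactly what would spoil the target denominator: the statement forces the constant $1$, so that the denominator is $2\au_{\lceil \frac{k+1}{2} \rceil}+1$ rather than $2\au_{\lceil \frac{k+1}{2} \rceil}+2$. Since $C(S)$ is the Farey graph when $\x(S)=1$, the extremal pairs realizing distance $d$ are the (every-other-)Fibonacci slopes, whose intersection numbers grow like $\phi^{2d-1}$ and in particular dominate $2^{d-1}$; this is the mechanism making $i(x,y)\geq 2^{\,d_{S}(x,y)-1}$ true. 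I would prove it by a halving surgery argument adapted to complexity one (surgering one curve along the other to produce a disjoint curve meeting $y$ at most half as often, then bounding the short tail by the fact that $i\leq 2$ forces distance at most $2$), being careful with the floor at each step so as to keep the additive constant equal to $1$ rather than letting it drift upward.

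Finally I would simply add the two displayed bounds:
$$[d_{S}(x,y)]_{k}+\sum_{A\subseteq S}\log[d_{A}(x,y)]_{k}\leq (L+1)\log i(x,y)+1,$$
divide through by $L+1=2\au_{\lceil \frac{k+1}{2} \rceil}+1$, and transpose the additive term, which reproduces exactly the claimed inequality. The only steps requiring care are the reduction of the non-annular sum to the term $Z=S$ (a complexity count) and the sharp constant in $[d_{S}(x,y)]_{k}\leq \log i(x,y)+1$; the combination itself is purely arithmetic once Theorem \ref{overlappingannulus} is in hand.
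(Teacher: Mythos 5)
Your proposal is correct and takes essentially the same route as the paper: the paper's proof likewise observes that for $\x(S)=1$ the left-hand side collapses to $[d_{S}(x,y)]_{k}+\sum_{A\in\Z(x,y,k)}\log d_{A}(x,y)$, invokes Theorem \ref{overlappingannulus} for the annular terms, and combines these with the bound $d_{S}(x,y)\leq \log i(x,y)+1$ by exactly the arithmetic you describe. The only difference is that the paper quotes that last bound as the Hempel--Lickorish inequality in the $\x(S)=1$ case rather than proving it, so your Farey-graph halving-surgery sketch is merely supplying an argument for a fact the paper treats as known.
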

\begin{proof} 
We observe $$ \sum_{Z\subseteq S}[ d_{Z}(x,y)]_{k}+\sum_{A\subseteq S} \log [d_{A}(x,y)]_{k}=[ d_{S}(x,y)]_{k}+\sum_{A \in \Z(x,y,k)} \log d_{A} (x,y).$$ By Hempel--Lickorish inequality for  $\x(S)=1$, we have $d_{S}(x,y)\leq  \log i(x,y)+1$. We are done with Theorem \ref{overlappingannulus}.
\end{proof}

\subsection{The proof for $\xi(S)>1$}

In the proof for $\xi(S)=1$, we separately proved the main theorem for $\{S\}$ and $\Z(x,y,n)$, taking an advantage of the fact that any two subsurfaces in the latter collection overlap. Then we combined them in Theorem \ref{mainannulus}. We essentially run the same argument. However, if $\xi(S)>1$, we may easily have two subsurfaces in $S$ such that they do not overlap. We overcome this issue by Lemma \ref{partition} which aids from the following result of Gaster--Greene--Vlamis \cite{GGV}. 

\begin{theorem}[\cite{GGV}]\label{color}
The chromatic number of $C(S_{g,b})$, that is the fewest number of colors required to color the vertices of $C(S_{g,b})$ so that adjacent vertices obtain different colors, is bounded by $g\cdot 4^{g}$ if $b=0$ and $(g+1)\cdot 2^{2g+b-1}$ if $b>0$.
\end{theorem}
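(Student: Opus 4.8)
The goal is to exhibit a proper coloring of $C(S_{g,b})$ directly, that is, a map from vertices to a finite palette assigning distinct colors to any two disjoint curves (in the complexity range where adjacency in $C(S_{g,b})$ is disjointness). The plan is to read the two factors of the bound off two separate invariants. The exponential factor is forced by mod-$2$ homology: since $\dim_{\mathbb{Z}/2}H_{1}(S_{g};\mathbb{Z}/2)=2g$ and $\dim_{\mathbb{Z}/2}H_{1}(S_{g,b};\mathbb{Z}/2)=2g+b-1$ for $b>0$, the set $H_{1}(S_{g,b};\mathbb{Z}/2)$ has exactly $4^{g}$ and $2^{2g+b-1}$ elements respectively, matching the claimed constants. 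So the primary coordinate of the coloring would be $\gamma\mapsto[\gamma]_{2}\in H_{1}(S_{g,b};\mathbb{Z}/2)$.

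Homology alone is not yet a coloring, because disjoint curves may be homologous; in fact two disjoint non-isotopic simple closed curves share a $\mathbb{Z}/2$-class precisely when their union bounds an embedded subsurface, so the surviving conflicts are exactly among \emph{homologous disjoint} curves. For a fixed \emph{nonzero} class these are controlled: a maximal collection of pairwise disjoint, pairwise non-isotopic curves in one nonzero class is bounded linearly in the genus, since each cobounding step consumes a handle. A secondary coordinate recording the position of $\gamma$ in such a family---equivalently a genus-type height measured from a fixed reference side---then takes at most $g$ (resp. $g+1$) values and resolves every conflict among nonzero classes. On this stratum the product $(\text{class},\text{height})$ is already a proper coloring of the intended shape, using at most $g\cdot(4^{g}-1)$ (resp. $(g+1)(2^{2g+b-1}-1)$) colors.

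The main obstacle is the null-homologous stratum. Every separating curve carries the zero class, yet a tree/pants count shows that $S_{g}$ supports up to $2g-3$ pairwise disjoint, pairwise non-isotopic separating curves; these form a clique, so the zero class by itself requires at least $2g-3$ colors and cannot be confined to the $g$-element budget that sufficed for a nonzero class. Consequently the coloring cannot be a naive product of a homology coordinate with a bounded-range height: the expensive separating families must be distributed across the colors belonging to \emph{many} homology values, so the two coordinates have to be coupled rather than chosen independently. Reconciling this coupling with the clean totals $g\cdot 4^{g}$ and $(g+1)\cdot 2^{2g+b-1}$ is exactly where the real work lies, and I expect it to hinge on a structural analysis of the complementary subsurfaces of a maximal disjoint family: a telescoping bookkeeping of how genus, and in the bounded case the boundary components, are parceled out from the reference outward, arranged so that the separating curves reuse the homology palette while staying distinct from every disjoint neighbor. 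Proving that such a height is well defined, strictly monotone along \emph{every} disjoint homologous chain, and attains only the allotted number of levels is the delicate point on which the whole bound turns.
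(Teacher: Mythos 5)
The paper does not actually prove this statement: it is imported verbatim from Gaster--Greene--Vlamis, with the closed case being Theorem 1.1 of \cite{GGV} and the $b>0$ case attributed to a communication of Vlamis based on the estimates in Section 5.5 of \cite{GGV}. So your attempt can only be measured against the GGV argument itself, and your first step does coincide with theirs: the exponential factors $4^{g}=|H_{1}(S_{g};\mathbb{Z}/2)|$ and $2^{2g+b-1}=|H_{1}(S_{g,b};\mathbb{Z}/2)|$ do arise from partitioning curves by mod-$2$ homology class, and your observation that the residual conflicts are exactly among disjoint homologous curves (whose unions bound subsurfaces) is correct.

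But what you have written is a program, not a proof, and the gap sits exactly where you place the ``real work.'' First, even on the nonzero strata your secondary coordinate is never constructed: a single nonseparating curve does not separate the surface, so it has no intrinsic ``side,'' and a ``genus-type height measured from a fixed reference side'' is not a well-defined function of a vertex. What is true and easy is the clique bound: a family of $k\ge 2$ pairwise disjoint, pairwise non-isotopic curves in a fixed nonzero class cuts $S_{g}$ into $k$ complementary pieces, each of genus at least $1$, with genera summing to $g-1$, so $k\le g-1$. But chromatic number is not clique number, and producing a $g$-valued invariant of individual curves that is injective on every such clique --- your ``strictly monotone along every disjoint homologous chain'' --- is precisely the content of the corresponding lemma in \cite{GGV}; you assert its existence rather than prove it. Second, you correctly note that the null-homologous stratum destroys the naive product coloring, since separating curves form cliques of size $2g-3>g$, but your proposed remedy of letting separating curves ``reuse the homology palette'' introduces conflicts you do not address: a separating curve recolored with a pair $(v,h)$, $v\ne 0$, can be disjoint from a nonseparating curve of class $v$ carrying the same height, so properness across strata fails unless the coupling is engineered --- and that engineering is the step you explicitly leave open. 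As it stands the argument establishes neither the $g\cdot 4^{g}$ bound nor the $(g+1)\cdot 2^{2g+b-1}$ bound; it identifies the right first coordinate and the right obstruction, then stops.
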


In Theorem \ref{color}, the chromatic number of $C(S_{g,b})$ if $b=0$ is Theorem 1.1 of \cite{GGV}. We are grateful to Nicholas Vlamis for providing the chromatic number of $C(S_{g,b})$ if $b>0$, which, according to him, follows from the estimates in Section $5.5$ of \cite{GGV}. 

We have 

\begin{lemma}\label{partition}
There exists a partition of the set of all subsurfaces of $S$ into $\nv$ many collections so that any two subsurfaces in the same collection overlap, where $\nv $ is bounded by the chromatic number of $C(S)$.
\end{lemma}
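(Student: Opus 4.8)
The plan is to phrase the lemma as a graph-coloring statement and then import a proper coloring of $C(S)$. Let $\H$ be the graph whose vertices are all essential subsurfaces of $S$, with an edge between $X$ and $Y$ whenever they do \emph{not} overlap; since $X\pitchfork Y$ fails exactly when $i(\partial X,\partial Y)=0$, adjacency in $\H$ means that $\partial X\cup\partial Y$ can be realized as a single multicurve. A partition of the subsurfaces into collections whose members pairwise overlap is precisely a proper coloring of $\H$, so it suffices to bound $\chi(\H)$ by the chromatic number of $C(S)$; the numerical estimate $\nv\le \xi(S)\cdot 4^{\xi(S)}$ then follows from Theorem \ref{color}.

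First I would fix a proper coloring $c$ of $C(S)$ and record the key point: if $X$ and $Y$ do not overlap, then every curve of $\partial X$ is disjoint from every curve of $\partial Y$, so $\partial X\cup\partial Y$ is a clique of $C(S)$ and its curves carry pairwise distinct colors. Hence, setting $f(Z):=\min c(\partial Z)$, a coincidence $f(X)=f(Y)$ among non-overlapping subsurfaces is impossible unless $\partial X$ and $\partial Y$ share a curve that realizes the minimal color for both: distinct curves of the same color are never disjoint, whereas the boundary curves of non-overlapping subsurfaces are, so the only way to repeat a color is through a genuinely shared boundary component. Thus $f$ already produces the desired partition for every non-overlapping pair that shares no boundary component, using only $\chi(C(S))$ colors. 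For annuli the conclusion is immediate from $c$, since non-overlapping annuli have disjoint, distinct cores and therefore receive distinct colors.

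The main obstacle is the remaining shared-boundary case, and it is genuinely unavoidable for any invariant built from boundary curves alone: the two complementary components of a single separating curve have the same boundary yet must land in different collections. To handle it I would enrich $f$ with two pieces of side/containment data attached to the minimal-color boundary curve $c$ of $Z$, namely the side of $c$ on which $Z$ lies (there are only two) and the complexity $\xi(Z)$. The point is that two non-overlapping subsurfaces that share $c$ as their minimal-color boundary and lie on the \emph{same} side of $c$ both contain the collar of $c$ on that side, so their interiors meet; being non-overlapping they are then nested, and nesting strictly increases complexity. Recording the triple $(\min c(\partial Z),\ \text{side of }c,\ \xi(Z))$ therefore separates all remaining non-overlapping pairs. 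I expect the crux to be verifying that this enrichment does not inflate the palette beyond control: since each curve has only two sides and $\xi(Z)$ takes at most $\xi(S)$ values, the number of collections stays within a bounded multiple of $\chi(C(S))$, which is absorbed by the estimate of Theorem \ref{color} to give $\nv\le \xi(S)\cdot 4^{\xi(S)}$.
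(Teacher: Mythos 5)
Your first construction is essentially the paper's entire proof: the paper colors all boundary components of all subsurfaces using Theorem \ref{color} and declares subsurfaces to be in a common collection when their boundaries share a color, which (after selecting, say, the minimal color, as you do, so that the classes are genuinely disjoint --- ``sharing a color'' is not transitive) is exactly your map $f$. Where you diverge is in taking the shared-boundary case seriously, and you are right to do so: the paper's two-sentence proof does not address it, and your example of the two complementary components of a separating curve (together with the annulus about that curve) shows that \emph{any} assignment depending only on the colored boundary multicurve places non-overlapping subsurfaces in a single class. Since the lemma is invoked precisely to guarantee pairwise overlap inside each class (so that Lemma \ref{replace} applies in the chain argument of Theorem \ref{mainnonannulus}), this is a genuine issue and your repair is needed. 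The patch itself is sound modulo two details worth writing out: the ``side'' tag is ill-defined for an annulus about $c$ (but annuli may be given their own type tag, and two annuli with the same core are the same subsurface, so this is harmless), and the claim that proper essential nesting strictly decreases $\xi$ deserves a line --- if $X\subsetneq Y$ are essential, non-isotopic, and $\xi(X)=\xi(Y)$, then comparing $\xi=3g+b-3$ with $|\chi|=2g+b-2$ and using $g_{X}\le g_{Y}$ and $|\chi(X)|\le|\chi(Y)|$ forces equal genus and equal Euler characteristic, whence the complement is a union of boundary-parallel annuli and $X$ is isotopic to $Y$, a contradiction.

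The price of the repair is real, and you should state it honestly: your partition has at most roughly $2\xi(S)\cdot\chi(C(S))$ classes, not $\chi(C(S))$, so you prove a weaker bound than the lemma literally asserts, and correspondingly the estimate $P_{S}\le\xi(S)\cdot 4^{\xi(S)}$ quoted in Theorem \ref{main} and Remark \ref{island} would inflate by the factor $2\xi(S)$. This is tolerable for the paper's purposes, since every downstream use of Lemma \ref{partition} only needs \emph{some} explicit bound on $P_{S}$ feeding into $U_{S,k}$, so all applications survive with slightly larger constants; but your write-up should replace the chromatic-number bound in the statement by your corrected bound, since your construction --- and, as far as the shared-boundary pairs are concerned, the paper's own argument --- does not achieve the bound as stated.
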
 
\begin{proof}
We give the coloring given by Theorem \ref{color} to all boundary components of all subsurfaces of $S$. Subsurfaces are in the same group if their boundary components have a color in common. This gives a desired partition. 
\end{proof}
We are grateful to Martin Bobb for conversation on the above.
%Throughout, we let $\nv$ denote the constant given in Lemma \ref{partition}. 
\begin{remark}\label{island}
Theorem \ref{main} is expressed with $\nv$, Theorem \ref{main} holds for the minimum possible constant for $\nv$ which corresponds to the sharpest partition which satisfies the pairwise overlapping condition stated in Lemma \ref{partition}. For instance, the minimum constant for $\nv$ is $2$ if $\x(S)=1.$
\end{remark}

Now, we start the proof.

\begin{lemma}\label{base}
Let $x,y\in C(S)$. Let $Z \in \Z(x,y,n)$ such that $Z$ is not an annulus. If $n\geq 14$, then $$\frac{d_{Z}(x,y)}{\U_{n+1}}+\log i(x,Z)+\log i(Z,y) \leq \log i(x,y)$$ where $\U_{p}= \frac{p}{\log \big(2^{\frac{p-12}{2}} -1\big)}$ (See Figure \ref{four}.).

\end{lemma}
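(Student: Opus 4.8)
The goal is to prove, for a non-annular subsurface $Z$ with $d_{Z}(x,y)>n\geq 14$, the inequality
$$\frac{d_{Z}(x,y)}{\U_{n+1}}+\log i(x,Z)+\log i(Z,y) \leq \log i(x,y),$$
which is the exact analogue of Lemma \ref{annulus} but for non-annular domains. The key structural difference from the annular case is that Lemma \ref{annulus} exploited the clean arc-intersection estimate from \cite{ABG} relating $d_{A}(x,y)$ to $i_{A}(a,b)$; for a non-annular $Z$ the distance $d_{Z}(x,y)$ is a graph-metric distance in $C(Z)$ rather than (the log of) an intersection number, so the logarithm gets absorbed differently and we should expect a linear-in-$d_{Z}$ term on the left rather than a logarithmic one. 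This is exactly why $\U_{p}=p/\log(2^{(p-12)/2}-1)$ has a linear numerator, in contrast to $\au_{p}=\log p/\log(p-5)$.

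\textbf{First step: reduce from $d_{Z}(x,y)$ to intersection numbers inside $Z$.} The natural tool is the Hempel--Lickorish bound \emph{applied within the surface $Z$} (or its relevant compactification/double, as is standard for subsurface projections). I would fix $a\in\{x\cap Z\}$ and $b\in\{y\cap Z\}$ realizing the projections, and use that in $C(Z)$ the distance between $\pi_{Z}(x)$ and $\pi_{Z}(y)$ is controlled by $d_{Z}(x,y)\leq 2\log_{2} i_{Z}(a,b)+\text{const}$, where the constant accounts for the bounded distance between $a,b$ and their projected boundary curves $\pi_{Z}(x),\pi_{Z}(y)$ (each projection moves a curve a bounded amount, on the order of $1$--$2$). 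Rearranging the Hempel--Lickorish inequality gives a lower bound $i_{Z}(a,b)\geq 2^{(d_{Z}(x,y)-c)/2}$ for a small explicit constant $c$. The shape of $\U_{p}$ strongly suggests the bookkeeping yields $i_{Z}(a,b)-1\geq 2^{(d_{Z}(x,y)-12)/2}$, i.e. the additive constant is $12$ and a $-1$ appears (presumably from passing between an edge count and a distance, or from the ``$+2$'' slack in Hempel--Lickorish); taking $\log$ then gives $d_{Z}(x,y)\leq \U_{n+1}\cdot\log i_{Z}(a,b)$ for every such $a,b$, using that $\U_{p}$ is the correct decreasing envelope for $p\geq n+1$.

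\textbf{Second step: sum the intersections and insert the $i(x,Z)$, $i(Z,y)$ factors.} As in Lemma \ref{annulus}, I would then sum $i_{Z}(a,b)$ over all arcs $a\in\{x\cap Z\}$ and $b\in\{y\cap Z\}$ and use that the total is bounded by the global intersection $i(x,y)$. The two extra log-terms $\log i(x,Z)$ and $\log i(Z,y)$ should come from counting how many times $x$ (respectively $y$) crosses $\partial Z$: each arc $a$ costs intersections with $x$, each arc $b$ costs intersections with $y$, and multiplying the per-pair lower bound on $i_{Z}(a,b)$ by the number of arcs produces additive $\log i(x,Z)$ and $\log i(Z,y)$ contributions after taking logarithms. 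Concretely, one wants $\log\big(\sum_{a,b} i_{Z}(a,b)\big)\geq \frac{d_{Z}(x,y)}{\U_{n+1}}+\log i(x,Z)+\log i(Z,y)$, and then $\log i(x,y)\geq \log\big(\sum_{a,b} i_{Z}(a,b)\big)$ closes the argument. I would choose the single pair $(a,b)$ minimizing, or a representative realizing, the per-pair bound, mirroring precisely the end of the proof of Lemma \ref{annulus} where $\frac{\log d_{A}}{\au_{n+1}}\leq \log i_{A}(a,b)$ for \emph{all} $a,b$ is combined with $\sum i_{A}(a,b)\leq i(x,y)$.

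\textbf{Main obstacle.} The delicate point is the first step: making the passage from the combinatorial distance $d_{Z}(x,y)$ to $i_{Z}(a,b)$ fully effective, with the \emph{exact} additive constant $12$ in the exponent and the \emph{exact} $-1$ inside the logarithm, rather than an unspecified $O(1)$. This requires carefully tracking (i) the bounded distance between the arc $a$ and the curve $\pi_{Z}(a)$ it produces under the projection definition (the regular-neighborhood boundary construction), (ii) the precise constants in the Hempel--Lickorish inequality as stated for curves in the introduction, namely $d\leq 2\log_{2} i+2$, applied inside $Z$, and (iii) whether $i_{Z}(a,b)$ or $i_{Z}(a,b)+1$ is the right quantity to feed in so that the denominator $\log(2^{(p-12)/2}-1)$ stays positive for all $p\geq n+1\geq 15$. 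Everything else parallels Lemma \ref{annulus} almost verbatim; I expect the author's proof to cite an effective distance-versus-intersection bound inside $Z$ (analogous to the $\cite{ABG}$ input used in the annular case) and to verify positivity of the denominator for $n\geq 14$, which forces the threshold $14$.
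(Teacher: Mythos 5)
Your proposal follows essentially the same route as the paper's proof: the paper applies Hempel--Lickorish inside $Z$ via the two effective facts $d_{Z}(x,y)-6\leq d_{Z}(x',y')$ for $x'\in\pi_{Z}(x)$, $y'\in\pi_{Z}(y)$ and $i_{Z}(x',y')\leq 4\,i_{Z}(a,b)+4$ (so the additive $12$ decomposes as $6+2+2\log 4$, and the bookkeeping lands as $2^{(d_{Z}(x,y)-12)/2}-1\leq i_{Z}(a,b)$, i.e.\ the $-1$ goes the opposite way from your guess $i_{Z}(a,b)-1\geq 2^{(d_{Z}(x,y)-12)/2}$), then concludes exactly as in Lemma \ref{annulus} by summing $i_{Z}(a,b)$ over arc pairs. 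The constant-tracking you flag as the main obstacle is precisely what the paper supplies, and your plan is otherwise correct.
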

\begin{proof}
We have $d_{Z}(x,y)-6\leq d_{Z}(x',y')$ for all $x'\in \pi_{Z}(x)$, $y' \in \pi_{Z}(y)$ by the definition of $\pi_{Z}$.
If $a\in  \{x \cap Z\}$ and $b\in  \{y \cap Z\}$ give rise to $x'$ and $y'$ respectively via $\pi_{Z}$, then $i_{Z}(x',y')\leq 4 i_{Z}(a,b)+4$ by the definition of $\pi_{Z}$. 
With Hempel--Lickorish inequality, we have 
$$d_{Z}(x,y)-6\leq 2\log (4 i_{Z}(a,b)+4)+2$$ for all $a\in  \{x \cap Z\}$ and $b\in  \{y \cap Z\}$. Hence, we have $$ 2^{\frac{d_{Z}(x,y)-12}{2}}-1  \leq  i_{Z}(a,b)$$ for all $a\in  \{x \cap Z\}$ and $b\in  \{y \cap Z\}$. 
Clearly, $$ d_{Z}(x,y)\leq \frac{d_{Z}(x,y)}{\log \big(2^{\frac{d_{Z}(x,y)-12}{2}}-1\big )}  \log i_{Z}(a,b)$$ for all $a\in  \{x \cap Z\}$ and $b\in  \{y \cap Z\}$. 

%$$ \frac{2^{\frac{d_{Z}(x,y)-8}{2}}}{4}-1= 2^{\frac{d_{Z}(x,y)-12}{2}}-1  \leq  i_{Z}(a,b)$$ for all $a\in  \{x \cap Z\}$ and $b\in  \{y \cap Z\}$. 

By defining the following decreasing function $$\U_{p}= \frac{p}{\log \big(2^{\frac{p-12}{2}} -1\big)},$$ we have 
$$d_{Z}(x,y)\leq \U_{n+1}  \log i_{Z}(a,b)$$for all $a\in  \{x \cap Z\}$ and $b\in  \{y \cap Z\}$.

%We have $\frac{d_{Z}(x,y)-8}{2}\leq \log (4 i_{Z}(a,b)+4)$ $$2^{\frac{d_{Z}(x,y)-8}{2}}\leq 4 i_{Z}(a,b)+4$$ $$\frac{2^{\frac{d_{Z}(x,y)-8}{2}}}{4}-1= 2^{\frac{d_{Z}(x,y)-12}{2}}-1  \leq  i_{Z}(a,b)$$

\begin{figure}[htbp]
 \begin{center}
  \includegraphics[width=118mm]{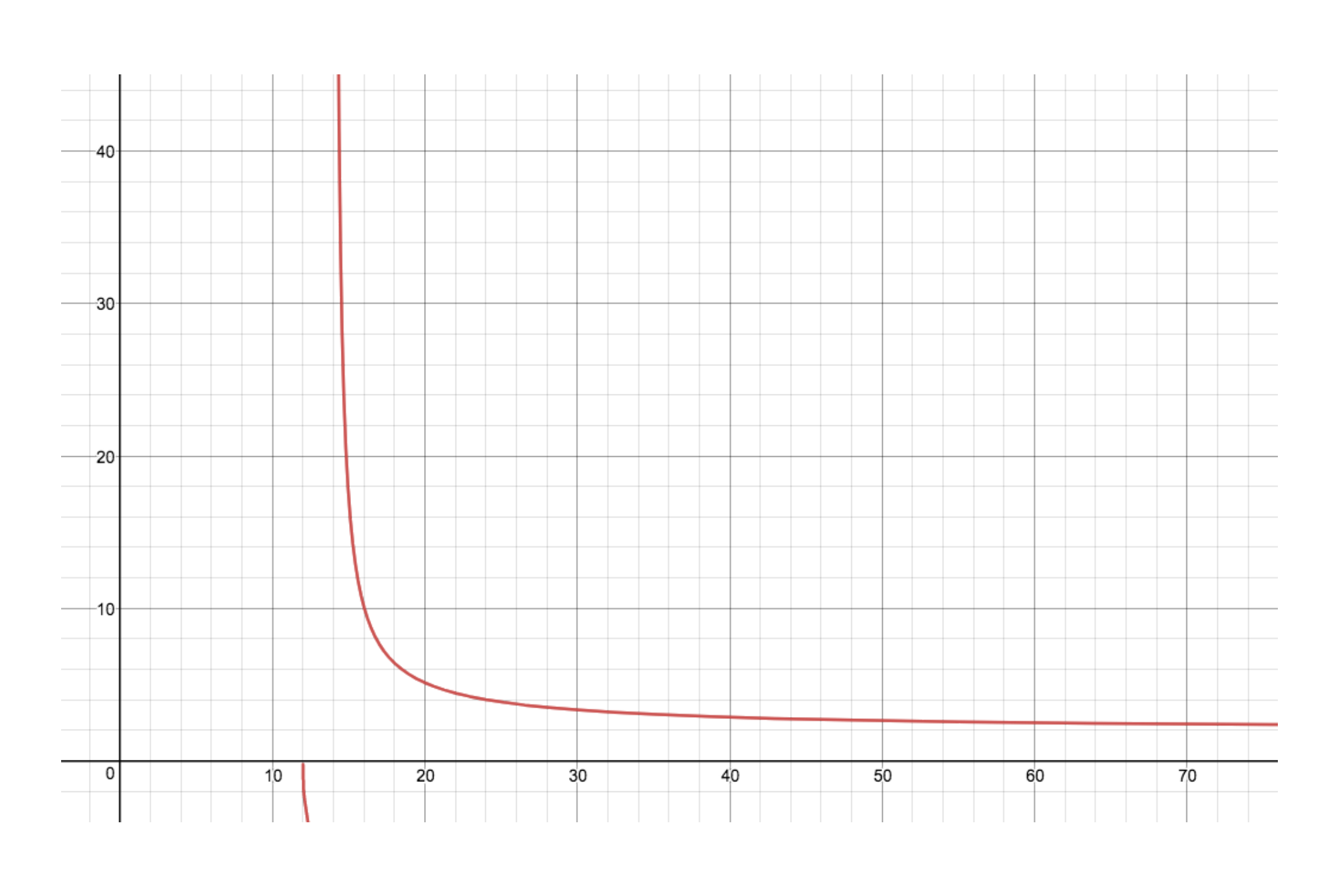}
 \end{center}
 \caption{$\U_{p}= \frac{p}{\log \big(2^{\frac{p-12}{2}} -1\big)}$}
  \label{four}
\end{figure}

As in the proof of Lemma \ref{annulus}, we have $$\sum_{a\in  \{x \cap Z\}, b\in  \{y \cap Z\}}i_{Z}(a,b)\leq i(x,y)$$ and we are done. %by combining $\frac{d_{Z}(x,y)}{2\K} \leq  \log i_{Z}(a,b)^{2}.$
\end{proof}

For simplicity, we use the following notation.

\begin{notation}
Let $x,y\in C(S)$. Let $Z\subseteq S$. We let $D_{Z}(x,y)$ denote $\log d_{Z}(x,y)$ if $Z$ is an annulus and let $D_{Z}(x,y)$ denote $d_{Z}(x,y)$ if $Z$ is not an annulus. 
\end{notation}

Now, we run the same argument given in the case of $\x(S)=1$, we omit details in the proofs.

\begin{notation}
Suppose $\x(S)>1$. Let $x,y\in C(S)$. Let $Z_{p},Z_{q}\in \Z(x,y,n)$ such that they overlap.
We let $Z_{p} \ppro Z_{q}$ denote the following relation: $$\frac{D_{Z_{q}}(x,y)}{2\U_{\lceil \frac{n+1}{2} \rceil}}+\log i(x,Z_{q})\leq \log i(x,Z_{p}).$$
\end{notation}

\begin{lemma}\label{replace}
Suppose $\x(S)>1$. Let $x,y\in C(S)$. If $n\geq 28$, then $Z_{q} \ppro Z_{p}$ or $Z_{p} \ppro Z_{q}$ for all $Z_{p} ,Z_{q}\in \Z(x,y,n)$ such that they overlap.
\end{lemma}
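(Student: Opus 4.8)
The plan is to mirror the proof of Lemma \ref{replaceannulus} almost verbatim, replacing the annular input Lemma \ref{annulus} by its non-annular counterpart Lemma \ref{base} and replacing $\log d$ throughout by the uniform quantity $D_{Z}$ (which equals $d_{Z}$ here, since $Z_{p},Z_{q}$ are now non-annuli). First I would apply the triangle inequality $d_{Z_{p}}(x,y)\leq d_{Z_{p}}(x,Z_{q})+d_{Z_{p}}(Z_{q},y)$ to split into two cases according to which summand on the right is at least $\tfrac{1}{2}d_{Z_{p}}(x,y)$.

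In the first case, $\tfrac{1}{2}d_{Z_{p}}(x,y)\leq d_{Z_{p}}(x,Z_{q})$; since $d_{Z_{p}}(x,y)>n$ and distances are integral this forces $d_{Z_{p}}(x,Z_{q})\geq \lceil\tfrac{n+1}{2}\rceil$. I would then invoke Lemma \ref{base} with $x,\partial Z_{q}$ in place of the base pair and $Z_{p}$ in place of $Z$ (the cutoff $\lceil\tfrac{n+1}{2}\rceil-1\geq 14$ being available), obtaining $\tfrac{d_{Z_{p}}(x,Z_{q})}{\U_{\lceil(n+1)/2\rceil}}+\log i(x,Z_{p})+\log i(Z_{p},Z_{q})\leq \log i(x,Z_{q})$. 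Dropping the term $\log i(Z_{p},Z_{q})$, which is nonnegative because $Z_{p},Z_{q}$ overlap so $i(Z_{p},Z_{q})\geq 1$, and then using the case hypothesis together with $D_{Z_{p}}=d_{Z_{p}}$ produces exactly $Z_{q}\ppro Z_{p}$. In the second case, $\tfrac{1}{2}d_{Z_{p}}(x,y)\leq d_{Z_{p}}(Z_{q},y)$, so $d_{Z_{p}}(Z_{q},y)>9$ and Theorem \ref{BI}, applicable precisely because $Z_{p}\pitchfork Z_{q}$, yields $d_{Z_{q}}(Z_{p},y)\leq 4$. The triangle inequality in $C(Z_{q})$ then recovers $d_{Z_{q}}(x,Z_{p})\geq d_{Z_{q}}(x,y)-4>\lceil\tfrac{n+1}{2}\rceil$, and a second application of Lemma \ref{base}, now to $x,\partial Z_{p}$ and $Z_{q}$, gives $Z_{p}\ppro Z_{q}$ after the same simplification.

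The routine points to watch are purely the constants: the cutoff $n\geq 28$ is exactly what makes $\lceil\tfrac{n+1}{2}\rceil-1\geq 14$ after halving, so that Lemma \ref{base}'s hypothesis is met and the function value lands precisely on $\U_{\lceil(n+1)/2\rceil}$ to match the definition of $\ppro$. The one step with genuine content beyond bookkeeping is the second case: Behrstock's inequality (Theorem \ref{BI}) is what converts a large projection $d_{Z_{p}}(Z_{q},y)$ on the $y$-side into the small value $d_{Z_{q}}(Z_{p},y)\leq 4$, which the triangle inequality then upgrades to a large $d_{Z_{q}}(x,Z_{p})$ feeding Lemma \ref{base}. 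This coordination between the two overlapping subsurfaces is the crux, and it is exactly where the overlapping hypothesis is indispensable.
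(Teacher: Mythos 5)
Your argument reproduces the paper's proof structure exactly---the same triangle-inequality dichotomy $d_{Z_{p}}(x,y)\leq d_{Z_{p}}(x,Z_{q})+d_{Z_{p}}(Z_{q},y)$, Lemma \ref{base} in the first case, and Theorem \ref{BI} plus the triangle inequality in $C(Z_{q})$ in the second---and within its scope the constant bookkeeping is correct. But there is one genuine gap: your opening claim that $Z_{p},Z_{q}$ ``are now non-annuli'' misreads the statement. The hypothesis $\x(S)>1$ concerns only the ambient surface; the set $\Z(x,y,n)=\{Z\subsetneq S \mid d_{Z}(x,y)>n\}$ still contains annuli, and the relation $\ppro$ is defined via $D_{Z_{q}}$ precisely to accommodate them ($D_{Z}=\log d_{Z}$ on annuli). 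The downstream application in Theorem \ref{mainnonannulus} partitions all of $\Z(x,y,k)\cup\{S\}$, with annuli and non-annuli mixed in the same collection $\mathscr{P}_{i}$---the annular term $\sum_{A}\log[d_{A}(x,y)]_{k}$ in Theorem \ref{main} comes from exactly these---so a version of the lemma restricted to non-annular pairs would not suffice, and as written your proof establishes only that restriction.

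The repair is the one line the paper includes: when the subsurface being projected to is an annulus, replace Lemma \ref{base} by Lemma \ref{annulus} (the cutoff $\lceil\frac{n+1}{2}\rceil-1\geq 14\geq 6$ is available) and use $\au_{\lceil \frac{n+1}{2} \rceil}\leq \U_{\lceil \frac{n+1}{2} \rceil}$ to land on the uniform denominator $2\U_{\lceil \frac{n+1}{2} \rceil}$ appearing in the definition of $\ppro$. Note also that the halving step changes in the annular case: since $D$ is then a logarithm, ``divide the case hypothesis by $2$'' is not verbatim available; you need $\frac{\log d_{Z_{p}}(x,y)}{2}\leq \log d_{Z_{p}}(x,Z_{q})$ in the first case and $\frac{\log d_{Z_{q}}(x,y)}{2}\leq \log\bigl(d_{Z_{q}}(x,y)-4\bigr)$ in the second, which is exactly how Lemma \ref{replaceannulus} handles it and which holds comfortably for $n\geq 28$. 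With those substitutions your two cases go through unchanged, the second case again resting on Theorem \ref{BI} and the overlap hypothesis, as you correctly identified.
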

\begin{proof}
We have $d_{Z_{p}}(x,y)\leq d_{Z_{p}}(x,Z_{q})+d_{Z_{p}}(Z_{q},y),$ so $\frac{d_{Z_{p}}(x,y)}{2}\leq  d_{Z_{p}}(x,Z_{q})$ or $\frac{d_{Z_{p}}(x,y)}{2}\leq  d_{Z_{p}}(Z_{q},y)$.

If $\frac{d_{Z_{p}}(x,y)}{2}\leq  d_{Z_{p}}(x,Z_{q})$, then $14< d_{Z_{p}}(x,Z_{q})$. By Lemma \ref{annulus} when $Z_{p}$ is an annulus and Lemma \ref{base} when $Z_{p}$ is not an annulus, with the fact that $\au_{\lceil \frac{n+1}{2} \rceil}\leq \U_{\lceil \frac{n+1}{2} \rceil}$, we have $$\frac{D_{Z_{p}}(x,Z_{q})}{\U_{\lceil \frac{n+1}{2} \rceil}}+\log i(x,Z_{p})+\log i(Z_{p}, Z_{q}) \leq \log i(x,Z_{q})$$ so $Z_{q} \ppro Z_{p} .$

If $\frac{d_{Z_{p}}(x,y)}{2}\leq  d_{Z_{p}}(Z_{q},y)$, then by the same argument given in the proof of Lemma \ref{replaceannulus}, we have $14< d_{Z_{q}}(x,Z_{p})$. By the same argument given in the first case, we have $Z_{p} \ppro Z_{q}.$
\end{proof}

Lemma \ref{replace} is valid for the dual binary relation, $\pproy$, described in Remark \ref{fory}. We have 

\begin{theorem}\label{mainnonannulus}
Suppose $\x(S)>1$. Let $x,y\in C(S)$. For all $k\geq  28$, we have $$ \frac{ \sum_{Z\subseteq S}[ d_{Z}(x,y)]_{k}+\sum_{A\subseteq S} \log [d_{A}(x,y)]_{k}}{U_{S,k}}  - \frac{2}{U_{S,k}}\leq \log i(x,y)$$ where $U_{S,k}= (\nv-1) \cdot  2\U_{\lceil \frac{k+1}{2} \rceil} +2$.
\end{theorem}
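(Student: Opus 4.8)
The plan is to run the $\xi(S)=1$ argument (Theorems \ref{overlappingannulus} and \ref{mainannulus}) one collection at a time, using Lemma \ref{partition} to organize the proper subsurfaces of $\Z(x,y,k)$ into overlapping groups so that the chain argument is always legitimate. First I would rewrite the left-hand numerator exactly as in the opening line of Theorem \ref{mainannulus}, namely
$$\sum_{Z\subseteq S}[d_Z(x,y)]_k + \sum_{A\subseteq S}\log[d_A(x,y)]_k = [d_S(x,y)]_k + \sum_{Z\in\Z(x,y,k)}D_Z(x,y),$$
so that the top term $[d_S(x,y)]_k$ is separated off and every proper subsurface is recorded uniformly through $D_Z$ (which is $d_Z$ off annuli and $\log d_Z$ on annuli).

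Next I would establish the per-collection analogue of Theorem \ref{overlappingannulus}: if $\mathcal{C}\subseteq\Z(x,y,k)$ is any family of pairwise-overlapping subsurfaces, then $\sum_{Z\in\mathcal{C}}D_Z(x,y)\leq 2\U_{\lceil\frac{k+1}{2}\rceil}\log i(x,y)$. This is a verbatim transcription of the chain argument: I pick an anchor $Z_1\in\mathcal{C}$, apply Lemma \ref{annulus} (if $Z_1$ is an annulus) or Lemma \ref{base} (otherwise) together with the function estimate $\U_{k+1}\leq 2\U_{\lceil\frac{k+1}{2}\rceil}$ — the non-annular twin of $\au_{n+1}\leq 2\au_{\lceil\frac{n+1}{2}\rceil}$ used in Theorem \ref{overlappingannulus} — to obtain the anchor inequality $(\dagger)$ carrying both $\log i(x,Z_1)$ and $\log i(Z_1,y)$. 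I then split $\mathcal{C}\setminus\{Z_1\}$ according to whether $\tfrac{d_Z(x,y)}{2}$ is realized on the $x$-side or the $y$-side and build two telescoping chains, via $\ppro$ and its dual $\pproy$ respectively, exactly as in Remark \ref{fory}. Because every pair in $\mathcal{C}$ overlaps, Lemma \ref{replace} (which rests on Theorem \ref{BI}) applies at each comparison, the chains terminate, and telescoping into $(\dagger)$ yields the claimed bound after discarding the surviving nonnegative intersection terms.

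Then I would invoke Lemma \ref{partition} to split the proper subsurfaces occurring in $\Z(x,y,k)$ into at most $\nv-1$ pairwise-overlapping collections, the reduction from $\nv$ to $\nv-1$ coming from the fact that $S$ itself occupies one class of the partition of all subsurfaces by itself, since it overlaps nothing; this is consistent with Remark \ref{island}, where the value $\nv=2$ for $\xi(S)=1$ records the single overlapping class of annuli plus $S$. Summing the per-collection bound over these $\nv-1$ classes gives $\sum_{Z\in\Z(x,y,k)}D_Z(x,y)\leq (\nv-1)\cdot 2\U_{\lceil\frac{k+1}{2}\rceil}\log i(x,y)$. Bounding the separated term by Hempel--Lickorish, $[d_S(x,y)]_k\leq d_S(x,y)\leq 2\log i(x,y)+2$, and adding, I obtain
$$[d_S(x,y)]_k+\sum_{Z\in\Z(x,y,k)}D_Z(x,y)\leq\big((\nv-1)\,2\U_{\lceil\frac{k+1}{2}\rceil}+2\big)\log i(x,y)+2=U_{S,k}\log i(x,y)+2,$$
which rearranges to the statement upon dividing by $U_{S,k}$.

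The step I expect to be the main obstacle is the bookkeeping around the partition rather than any single inequality: I must ensure that the chain argument is only ever applied \emph{within} an overlapping class, so that Lemma \ref{replace}, and hence Theorem \ref{BI}, is genuinely available at every step, and I must verify that the number of classes carrying proper subsurfaces is truly $\nv-1$ and not $\nv$, since it is exactly this saving that the claimed constant $U_{S,k}=(\nv-1)2\U_{\lceil\frac{k+1}{2}\rceil}+2$ encodes. By contrast the anchor estimate $\U_{n+1}\leq 2\U_{\lceil\frac{n+1}{2}\rceil}$ is a routine monotonicity check on the function of Lemma \ref{base}, and the telescoping is identical to the $\xi(S)=1$ case, so these may be cited rather than reproved.
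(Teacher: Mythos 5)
Your proposal is correct and takes essentially the same route as the paper: the paper likewise partitions $\Z(x,y,k)\cup\{S\}$ into $\nv$ pairwise-overlapping classes (so that $\{S\}$ forms its own class and the proper subsurfaces occupy at most $\nv-1$ classes), runs the chain argument of Theorem \ref{overlappingannulus} via Lemma \ref{replace} together with $\U_{k+1}\leq 2\U_{\lceil \frac{k+1}{2}\rceil}$ in each class other than $\{S\}$, and bounds the $\{S\}$ class by Hempel--Lickorish before combining to get $U_{S,k}\log i(x,y)+2$. Your choice to split off $[d_{S}(x,y)]_{k}$ at the outset rather than carrying $S$ through the partition is purely cosmetic.
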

\begin{proof} 
Let $\{\mathscr{P}_{i}\}_{i=1}^{\nv}$ be a partition of $\Z(x,y,k) \cup \{S\}$, which satisfies the pairwise overlapping condition stated in Lemma \ref{partition}. In particular, $\nv \leq \xi(S)\cdot 4^{\xi(S)}$. %It suffices to show $$ \frac{ \sum_{Z \in {\mathscr{P}_{i}}} D_{Z}(x,y)}{2\U_{\lceil \frac{k+1}{2} \rceil}}  \leq \log i(x,y)$$ for all $i$.
If $\mathscr{P}_{i}\neq \{S\}$, then we run the same argument given in Theorem \ref{overlappingannulus} using Lemma \ref{replace} with the fact that $\U_{k+1}\leq 2\U_{\lceil \frac{k+1}{2}\rceil}$:$$\frac{ \sum_{Z \in {\mathscr{P}_{i}}} D_{Z}(x,y)}{2\U_{\lceil \frac{k+1}{2} \rceil} }  \leq \log i(x,y).  $$ 
If $\mathscr{P}_{i}=\{S\}$, then we have Hempel--Lickorish inequality: $d_{S}(x,y)\leq 2 \log i(x,y)+2.$

We combine them:
$$ \frac{ \sum_{Z\subseteq S}[ d_{Z}(x,y)]_{k}+\sum_{A\subseteq S} \log [d_{A}(x,y)]_{k}}{(\nv-1) \cdot  2\U_{\lceil \frac{k+1}{2} \rceil} +2}  - \frac{1}{ (\nv-1) \cdot  \U_{\lceil \frac{k+1}{2} \rceil} +1}\leq \log i(x,y).$$

\end{proof}

\section{Applications}

\begin{notation}
Let $a,b\in C(S)$. Let $k>0$. We let $\S(a,b,k)$ denote $\sum_{Z\subseteq S}[ d_{Z}(a,b)]_{k}+\sum_{A\subseteq S} \log [d_{A}(a,b)]_{k}$. 
\end{notation}

We have established the following.
\begin{theorem}\label{th} 
Let $x,y\in C(S)$. For all $k\geq 28$, we have 
$$ \frac{  \log i(x,y) }{V_{S,k}} -1\leq  \S(x,y,k)  \leq  U_{S,k} \cdot \log i(x,y)+2$$ where $V_{S,k}$ and $U_{S,k}$ are given by Theorem \ref{igd} and Theorem \ref{main} respectively.

%$U_{S,k}=(\nv-1) \cdot  2 \Bigg( \frac{\lceil \frac{k+1}{2} \rceil}{\log \big(2^{\frac{\lceil \frac{k+1}{2} \rceil-12}{2}} -1\big)}\Bigg) +2$ and $V_{S,k}=\big( 200^{2}|\chi(S)| \cdot (k+\x(S)\cdot 200)  \big)^{\x(S)+2}.$  
%$$ \frac{  \S(x,y,k) }{U_{S,k}} \leq \log i(x,y)\leq   V_{S,k} \cdot  \S(x,y,k) + V_{S,k}$$ where $U_{S,k}=(8 \nv)^{\x(S)}$ and $V_{S,k}=\big( 200^{2}|\chi(S)| \cdot (k+\x(S)\cdot 200)  \big)^{\x(S)+2}.$  
\end{theorem}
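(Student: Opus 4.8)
The plan is to observe that Theorem~\ref{th} is nothing more than a rearrangement of the two coarse bounds already in hand, since the quantity $\S(x,y,k)$ is by definition exactly the sum $\sum_{Z\subseteq S}[d_{Z}(x,y)]_{k}+\sum_{A\subseteq S}\log[d_{A}(x,y)]_{k}$ appearing on the subsurface-projection side of both Theorem~\ref{igd} and Theorem~\ref{main}. Neither direction requires new geometric input; the work is purely algebraic bookkeeping together with a check that the ranges of validity are compatible.

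For the left inequality I would start from Theorem~\ref{igd}, which gives $\log i(x,y)\leq V_{S,k}\cdot \S(x,y,k)+V_{S,k}$ for every $k>0$. Dividing through by the positive quantity $V_{S,k}$ and subtracting $1$ yields $\frac{\log i(x,y)}{V_{S,k}}-1\leq \S(x,y,k)$, which is precisely the desired lower bound. Since Theorem~\ref{igd} holds for all $k>0$, it holds in particular on the range $k\geq 28$ under consideration.

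For the right inequality I would invoke Theorem~\ref{main} (that is, Theorem~\ref{mainnonannulus}) when $\x(S)>1$, which states $\frac{\S(x,y,k)}{U_{S,k}}-\frac{2}{U_{S,k}}\leq \log i(x,y)$ for $k\geq 28$. Multiplying through by the positive quantity $U_{S,k}$ and adding $2$ gives $\S(x,y,k)\leq U_{S,k}\cdot\log i(x,y)+2$, the desired upper bound. When $\x(S)=1$ the same rearrangement applied to Theorem~\ref{mainone} (Theorem~\ref{mainannulus}) produces the identical conclusion, with $U_{S,k}$ read as $2\au_{\lceil\frac{k+1}{2}\rceil}+1$; the hypothesis $k\geq 18$ there is subsumed by $k\geq 28$.

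The two rearrangements then combine into the single double inequality asserted. I do not anticipate a genuine obstacle: the only points demanding care are the compatibility of the quantifier ranges---confirming that $k\geq 28$ lies inside the domains of validity of both Theorem~\ref{igd} (all $k>0$) and Theorem~\ref{main} (all $k\geq 28$)---and, if one wishes the statement to cover every surface, recording that the symbol $U_{S,k}$ must be interpreted via Theorem~\ref{mainone} rather than Theorem~\ref{main} in the low-complexity case $\x(S)=1$.
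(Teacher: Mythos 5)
Your proposal is correct and takes essentially the same route as the paper, which offers no separate argument at all---it introduces the statement with ``We have established the following,'' meaning exactly the algebraic rearrangement of Theorem~\ref{igd} and Theorems~\ref{mainone}/\ref{main} that you carry out. Your care about the quantifier ranges and about reading the constant via Theorem~\ref{mainone} when $\x(S)=1$ is consistent with the paper's own framing (cf.\ Remark~\ref{island}).
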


Now, we give applications using Theorem \ref{th}.

\subsection{On the curve graphs}\label{app}

We recall the constant $B\leq 100$ given by Theorem \ref{BGIT}.
\begin{theorem}
Let $x,y\in C(S)$ and $g=\{v_{i}\}$ be a tight geodesic between $x$ and $y$ such that $d_{S}(x,v_{i})=i$ for all $i$. Let $\{v_{t_{j}}\}_{j=0}^{n}\subseteq g$ such that $d_{S}(x, v_{t_{j}})+2<d_{S}(x, v_{t_{j+1}})$ for all $j$. For all $k\geq 200+\cut,$ we have $$\frac{\sum_{j=0}^{n-1}\log i(v_{t_{j}}, v_{t_{j+1}})}{V_{S,k}} \leq \bigg(\frac{k\cdot U_{S,k-200}}{k-200} +\frac{2}{3}\bigg)\cdot \log i(x,y)+  \frac{2k}{k-200} +\frac{2}{3} .$$ 
\end{theorem}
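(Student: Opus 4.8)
The plan is to play the two directions of Theorem~\ref{th} against each other: apply its lower bound to each consecutive pair $(v_{t_j},v_{t_{j+1}})$ at cut-off $k$, and its upper bound to $(x,y)$ at the smaller cut-off $k-200$. Theorem~\ref{BGIT}, with its constant $B\leq 100$, is exactly what lets me pass between these two cut-offs, and the loss of $2B=200$ in the cut-off is the source of the factor $\frac{k}{k-200}$.

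First I would use the left inequality of Theorem~\ref{th} in the form $\frac{\log i(v_{t_j},v_{t_{j+1}})}{V_{S,k}}\leq \S(v_{t_j},v_{t_{j+1}},k)+1$ and sum over $j=0,\dots,n-1$, giving
\[
\frac{\sum_{j=0}^{n-1}\log i(v_{t_j},v_{t_{j+1}})}{V_{S,k}}\leq \sum_{j=0}^{n-1}\S(v_{t_j},v_{t_{j+1}},k)+n.
\]

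The main step is the estimate $\sum_{j}\S(v_{t_j},v_{t_{j+1}},k)\leq \frac{k}{k-200}\,\S(x,y,k-200)$, proved subsurface by subsurface. Fix $Z\subsetneq S$ (the annular case being identical after replacing $d_Z$ by $\log d_Z$). Since each $[v_{t_j},v_{t_{j+1}}]$ is a geodesic subsegment of $g$ and $k>B$, Theorem~\ref{BGIT} forces any segment with $d_Z(v_{t_j},v_{t_{j+1}})>k$ to contain a vertex of $g$ that does not project to $Z$. I would then show that all non-projecting vertices of $g$ lie within $C(S)$-distance $2$ of one another (each is disjoint from $\partial Z$, hence within distance $1$ of it), so they occupy at most three consecutive positions along $g$; combined with the spreading hypothesis $t_{j+1}-t_j\geq 3$, this confines them to a single gap $(t_{j^*},t_{j^*+1})$ and shows that at most one segment contributes a nonzero term. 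Because every vertex outside that gap projects to $Z$, Theorem~\ref{BGIT} bounds $d_Z(x,v_{t_{j^*}})\leq B$ and $d_Z(v_{t_{j^*+1}},y)\leq B$, whence $d_Z(v_{t_{j^*}},v_{t_{j^*+1}})\leq d_Z(x,y)+2B$. A short computation then turns $d_Z(v_{t_{j^*}},v_{t_{j^*+1}})>k$ together with $d_Z(x,y)\geq d_Z(v_{t_{j^*}},v_{t_{j^*+1}})-200$ into $[d_Z(v_{t_{j^*}},v_{t_{j^*+1}})]_k\leq \frac{k}{k-200}[d_Z(x,y)]_{k-200}$; summing over all $Z$ and all annuli $A$ gives the claim.

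To finish, I would apply the upper bound of Theorem~\ref{th} at cut-off $k-200\geq 28$, namely $\S(x,y,k-200)\leq U_{S,k-200}\log i(x,y)+2$, and bound $n$ using that the spreading hypothesis forces $3n\leq d_S(x,v_{t_n})\leq d_S(x,y)$, while Hempel--Lickorish gives $d_S(x,y)\leq 2\log i(x,y)+2$, so that $n\leq \tfrac{2}{3}\log i(x,y)+\tfrac{2}{3}$. Substituting both estimates into the displayed inequality produces exactly the asserted bound. I expect the main obstacle to be the subsurface-by-subsurface step: although the counting argument is clean generically, one must treat carefully the borderline configurations in which a projecting vertex of $g$ is flanked by two non-projecting ones, verifying that the alignment of the projections (controlled through Theorem~\ref{BI} and Theorem~\ref{BGIT}) still charges each large projection of $Z$ to a single pair $(v_{t_j},v_{t_{j+1}})$, up to the additive constant $2B$ that is absorbed by the drop from cut-off $k$ to $k-200$.
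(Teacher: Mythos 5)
Your global architecture coincides with the paper's: apply the lower bound of Theorem~\ref{th} to each pair $(v_{t_j},v_{t_{j+1}})$ at cut-off $k$, prove the middle estimate $\sum_{j}\S(v_{t_j},v_{t_{j+1}},k)\leq \frac{k}{k-200}\cdot\S(x,y,k-200)$, apply the upper bound at cut-off $k-200\geq \cut$, and bound $n$ by $\frac{2}{3}\log i(x,y)+\frac{2}{3}$ via Hempel--Lickorish, exactly as in the paper. The difference is in how the middle estimate is obtained: the paper outsources it to Lemma~2.4 of \cite{W4}, which says that if $d_{W}(v_{t_j},v_{t_{j+1}})>\O$ then $d_{W}(x,v_{t_j})\leq \O$ and $d_{W}(v_{t_{j+1}},y)\leq \O$, and whose proof uses Theorem~\ref{BGIT} \emph{together with tightness}; you instead try to re-derive this inline from Theorem~\ref{BGIT} alone, via the (correct) observation that the vertices of $g$ projecting trivially to a fixed $Z$ occupy at most three consecutive positions. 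That re-derivation does work in the generic cases: if the non-projecting window lies strictly inside one gap $(t_{j^*},t_{j^*+1})$, then BGIT applied to $[x,v_{t_{j^*}}]$ and $[v_{t_{j^*+1}},y]$ gives both bounds with no tightness needed, and if the window contains some $v_{t_j}$ itself, the terms involving that vertex are vacuous.

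The genuine gap is the configuration you flag but propose the wrong tools for: the window can be $\{t_j-1,\,t_j+1\}$ with $v_{t_j}$ projecting nontrivially to $Z$. Then both segments $[v_{t_{j-1}},v_{t_j}]$ and $[v_{t_j},v_{t_{j+1}}]$ contain a non-projecting vertex, BGIT controls only the outer sub-geodesics (giving $d_Z(x,v_{t_j-2})\leq \O$ and $d_Z(v_{t_j+2},y)\leq \O$), and for a plain geodesic nothing constrains $\pi_Z(v_{t_j})$: the curve $v_{t_j}$ need only be disjoint from $v_{t_j\pm 1}$, both of which miss $Z$, so its projection can sit far from both $\pi_Z(x)$ and $\pi_Z(y)$. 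In that situation both $d_Z(v_{t_{j-1}},v_{t_j})$ and $d_Z(v_{t_j},v_{t_{j+1}})$ can exceed $k$ while $d_Z(x,y)\leq k-200$, so $Z$ contributes roughly $2k$ to the left-hand sum and nothing to $\S(x,y,k-200)$; both your ``at most one segment contributes'' claim and the per-subsurface charging fail. Theorem~\ref{BI} cannot repair this --- it concerns two overlapping subsurfaces, not two curves flanking a non-projecting pair. What closes the case is the tightness hypothesis, which your proposal never invokes: on a tight geodesic, $v_{t_j}$ is the boundary of the subsurface filled by $v_{t_j-1}\cup v_{t_j+1}$, and since both neighbors are realized (with geodesic representatives) in the complement of $Z$, so is that subsurface, forcing $\pi_Z(v_{t_j})=\emptyset$ --- i.e., the bad configuration cannot occur on a tight geodesic. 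Either make this argument explicitly or cite Lemma~2.4 of \cite{W4} as the paper does; note also that the same lemma, through the BGIT diameter bounds on the flanking sub-geodesics, is what guarantees that at most one $j$ contributes for each $W$, which your summation (and the paper's displayed chain of inequalities) silently requires. With that ingredient inserted, the rest of your computation --- the passage from $[\,\cdot\,]_{k}$ to $\frac{k}{k-200}[\,\cdot\,]_{k-200}$, its logarithmic annular version, and the final assembly --- goes through as you describe.
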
                  
\begin{proof}
If $d_{W}(v_{t_{j}}, v_{t_{j+1}})>\O$, then  $d_{W}(x,v_{t_{j}})\leq \O$ and $d_{W}(v_{t_{j+1}},y)\leq \O$, see Lemma 2.4 of \cite{W4} whose proof uses Theorem \ref{BGIT} and tightness. Therefore, $d_{W}(v_{t_{j}}, v_{t_{j+1}})\leq d_{W}(x,y)+2\O\leq d_{W}(x,y)+200.$
We have
\begin{eqnarray*}
\sum_{j=0}^{n-1} \bigg(\frac{\log i(v_{t_{j}}, v_{t_{j+1}})}{V_{S,k}}-1\bigg)
&\leq& \sum_{j=0}^{n-1} \S(v_{t_{j}}, v_{t_{j+1}},k)
\\&\leq &\frac{k}{k-200}\cdot \S(x,y,k-200)
\\&\leq &\frac{k}{k-200}\cdot \bigg( U_{S,k-200} \cdot \log i(x,y)+2 \bigg)
\\&\leq &\frac{k\cdot U_{S,k-200}}{k-200} \cdot \log i(x,y)+\frac{2k}{k-200}.
\end{eqnarray*}
We are done by Hempel--Lickorish inequality; $n\leq \lfloor \frac{d_{S}(x,y)}{3} \rfloor\leq  \frac{2\log i(x,y)}{3} +\frac{2}{3} .$
\end{proof}

%\begin{theorem}
%Let $x,y\in C(S)$ and $g=\{v_{i}\}$ be a geodesic between $x$ and $y$ such that $d_{S}(x,v_{i})=i$ for all $i$. Let $v_{q}, v_{p}\in g$ such that $d_{S}(x, v_{q})+2<d_{S}(x, v_{p})$. Then, for all $k\geq 100+ \cut $, we have $$\frac{\log i(x,y)}{V_{S,k}}-1 \leq  \frac{k\cdot U_{S,k-100}}{k-100} \cdot \bigg( \log i(x,v_{p})+\log i(v_{q},y) \bigg) + \frac{4k}{k-100}.$$
%\end{theorem}
%\begin{proof}
%If $d_{W}(x,y)>\O$, then we have $\pi_{W}(v_{i})=\emptyset$ for some $i$ by Theorem \ref{BGIT}. We let $I$ denote $\min \{i| \pi_{W}(v_{i})=\emptyset\}.$
%If $I>q$, then $\pi_{W}(v_{i})\neq \emptyset$ for all $i\leq q,$ so $d_{W}(x,v_{q})\leq \O$ by Theorem \ref{BGIT}. Hence, $d_{W}(x,y)\leq d_{W}(v_{q},y)+\O\leq d_{W}(v_{q},y)+100.$
%If $I\leq q$, then $\pi_{W}(v_{i})\neq \emptyset$ for all $i\geq p,$ so $d_{W}(v_{p},y)\leq \O$ by Theorem \ref{BGIT}. Hence, $d_{W}(x,y)\leq d_{W}(x,v_{p})+\O \leq d_{W}(x,v_{p})+100.$
%We have
%\begin{eqnarray*}
%\frac{\log i(x,y)}{V_{S,k}}-1 &\leq& \S(x,y,k)
%\\&\leq &\frac{k}{k-100}\cdot \bigg( \S(x,v_{p},k-100)+  \S(v_{q},y,k-100) \bigg)
%\\&\leq & \frac{k\cdot U_{S,k-100}}{k-100} \cdot \bigg( \log i(x,v_{p})+\log i(v_{q},y) \bigg) +  \frac{4k}{k-100}.
%\end{eqnarray*}
%\end{proof}

\begin{theorem}
Let $x,y\in C(S)$ and $g=\{v_{i}\}$ be a geodesic between $x$ and $y$ such that $d_{S}(x,v_{i})=i$ for all $i$. Let $v_{p}, v_{q}\in g$ such that $d_{S}(x, v_{p})+2<d_{S}(x, v_{q})$. For all $k\geq 100+ \cut $, we have $$\frac{\log i(x,y)}{V_{S,k}}-1 \leq  \frac{k\cdot U_{S,k-100}}{k-100} \cdot \bigg( \log i(x,v_{q})+\log i(v_{p},y) \bigg) + \frac{4k}{k-100}.$$
\end{theorem}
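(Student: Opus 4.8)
The plan is to reverse the strategy of the preceding theorem: instead of bounding the projections of many sub-pairs by those of $(x,y)$, I would bound the projections of $(x,y)$ by those of the two sub-pairs $(x,v_q)$ and $(v_p,y)$. By Theorem \ref{th} the left-hand side obeys $\frac{\log i(x,y)}{V_{S,k}}-1\leq \S(x,y,k)$, while the two sub-pairs obey $\S(x,v_q,k-100)\leq U_{S,k-100}\log i(x,v_q)+2$ and likewise for $(v_p,y)$. So the whole content is the comparison
\begin{equation}\label{starplan}
\S(x,y,k)\leq \frac{k}{k-100}\Big(\S(x,v_q,k-100)+\S(v_p,y,k-100)\Big);
\end{equation}
substituting these three inequalities and collecting constants then reproduces the claimed bound exactly, the factor $\frac{k}{k-100}$ absorbing the shift of the cut-off from $k-100$ to $k$ and the two $+2$'s assembling into $\frac{4k}{k-100}$.

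To prove \eqref{starplan} I would estimate the contribution of each subsurface counted in $\S(x,y,k)$ separately; the decisive case is a proper subsurface $W$ with $d_W(x,y)>k$, for which I claim the capturing dichotomy that $d_W(x,v_q)\geq d_W(x,y)-B$ or $d_W(v_p,y)\geq d_W(x,y)-B$. The crux is that the vertices of $g$ failing to project to $W$ are tightly clustered: any such vertex is disjoint from $\partial W$, so any two of them lie within distance $2$ of each other in $C(S)$, and since $g$ is geodesic their indices differ by at most $2$. Writing $a\leq b$ for the least and greatest such indices, we get $b-a\leq 2$, whereas the hypothesis $d_S(x,v_p)+2<d_S(x,v_q)$ gives $q\geq p+3$; these preclude $a\leq p<q\leq b$, so either $q>b$ or $p<a$. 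If $q>b$, every vertex of the sub-geodesic from $v_q$ to $y$ projects to $W$, hence $d_W(v_q,y)\leq B$ by Theorem \ref{BGIT}, and the triangle inequality yields $d_W(x,v_q)\geq d_W(x,y)-B$; the case $p<a$ is symmetric. Because the capturing sub-pair realizes $d_W(x,y)$ up to the additive error $B\leq 100$, the same logarithmic/cut-off arithmetic as in Lemma \ref{replaceannulus} shows that its cut-off-$(k-100)$ contribution, magnified by $\frac{k}{k-100}$, dominates the cut-off-$k$ contribution of $W$ to $\S(x,y,k)$, uniformly for annular and non-annular $W$.

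The remaining term $[d_S(x,y)]_k$ I would handle without projections, by path-additivity along $g$: since $d_S(x,v_q)=q$, $d_S(v_p,y)=d_S(x,y)-p$, and $p<q$, one has $d_S(x,y)\leq d_S(x,v_q)+d_S(v_p,y)$, and Hempel--Lickorish converts this into a bound by $2\big(\log i(x,v_q)+\log i(v_p,y)\big)+4$. This is exactly the contribution of the whole surface already encoded in the trailing $+2$ of $U_{S,k-100}=(P_S-1)\,2\U_{\lceil\frac{k-99}{2}\rceil}+2$ in Theorem \ref{main} (the $\{S\}$–part of the partition), so it is carried by the right-hand side without worsening the constants. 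Summing the per-subsurface estimates over the finitely many subsurfaces counted in $\S(x,y,k)$ gives \eqref{starplan}, and the final step is the routine substitution described in the first paragraph.

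I expect the capturing dichotomy for proper subsurfaces to be the main obstacle, and in particular the clustering of non-projecting vertices to be the essential input: it is precisely what upgrades Theorem \ref{BGIT} — which only constrains an all-projecting geodesic — into control over where along $g$ the projection to $W$ is permitted to move, and it is the reason the modest gap hypothesis $q\geq p+3$ is exactly what is required. The only delicate bookkeeping is checking that a sub-pair which captures $d_W(x,y)>k$ up to $B\leq 100$ indeed exceeds the shifted cut-off $k-100$, so that its contribution is genuinely counted on the right-hand side of \eqref{starplan}; the rest of the assembly is mechanical.
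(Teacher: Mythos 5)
Your proposal is correct and follows essentially the same route as the paper: reduce to the comparison $\S(x,y,k)\leq \frac{k}{k-100}\big(\S(x,v_{q},k-100)+\S(v_{p},y,k-100)\big)$, establish the capturing dichotomy $d_{W}(x,v_{q})\geq d_{W}(x,y)-B$ or $d_{W}(v_{p},y)\geq d_{W}(x,y)-B$ for each proper $W$ with a large projection via Theorem \ref{BGIT}, and then apply Theorem \ref{th} to the two sub-pairs. The paper phrases the dichotomy through $I=\min\{i \mid \pi_{W}(v_{i})=\emptyset\}$ with the cases $I>p$ and $I\leq p$, which is the same argument as your $a,b$ clustering window; indeed, your explicit justification that non-projecting vertices have indices within $2$ of each other (hence the hypothesis $q\geq p+3$) and your direct Hempel--Lickorish treatment of the $[d_{S}(x,y)]_{k}$ term supply details the paper leaves implicit.
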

\begin{proof}
If $d_{W}(x,y)>\O$, then we have $\pi_{W}(v_{i})=\emptyset$ for some $i$ by Theorem \ref{BGIT}. We let $I$ denote $\min \{i| \pi_{W}(v_{i})=\emptyset\}.$
If $I>p$, then $\pi_{W}(v_{i})\neq \emptyset$ for all $i\leq p,$ so $d_{W}(x,v_{p})\leq \O$ by Theorem \ref{BGIT}. Hence, $d_{W}(x,y)\leq d_{W}(v_{p},y)+\O\leq d_{W}(v_{p},y)+100.$
If $I\leq p$, then $\pi_{W}(v_{i})\neq \emptyset$ for all $i\geq q,$ so $d_{W}(v_{q},y)\leq \O$ by Theorem \ref{BGIT}. Hence, $d_{W}(x,y)\leq d_{W}(x,v_{q})+\O \leq d_{W}(x,v_{q})+100.$
We have
\begin{eqnarray*}
\frac{\log i(x,y)}{V_{S,k}}-1 &\leq& \S(x,y,k)
\\&\leq &\frac{k}{k-100}\cdot \bigg( \S(x,v_{q},k-100)+  \S(v_{p},y,k-100) \bigg)
\\&\leq & \frac{k\cdot U_{S,k-100}}{k-100} \cdot \bigg( \log i(x,v_{q})+\log i(v_{p},y) \bigg) +  \frac{4k}{k-100}.
\end{eqnarray*}
\end{proof}

\subsection{On the mapping class groups}\label{five}
Let $\phi$ be a pure mapping class. Then $\phi$ acts hyperbolically on the curve complexes of its supports, $Supp(\phi)$. If $Z\in Supp(\phi)$ such that $Z$ is not an annulus, then $d_{Z}(x, \phi^{n}(x))\geq \frac{|n|}{200  |\chi(S)|^{2}}$ by Gadre--Tsai \cite{GadreTsai}. If $A\in Supp(\phi)$ such that $A$ is an annulus, then $d_{A}(x, \phi^{n}(x))\geq |n|$ by Masur--Minsky \cite{MM2}. Nevertheless, by a work of Minsky \cite{MIN}, if $W\subsetneq Z\in Supp(\phi)$, then there exists $M_{\phi}$ such that $d_{W}(x, \phi^{n}(x))\leq M_{\phi}$ for all $x\in C(S)$ and $n\in \mathbb{Z}.$ We are grateful to Tarik Aougab and Hidetoshi Masai for explaining this. 

\begin{theorem}\label{pure}
Let $x \in C(S).$ Let $\phi$ be a pure mapping class. If $n$ is raised so that $d_{Z_{i}}(x,\p(x))>k\geq14$ for all $Z_{i}\in Supp(\phi),$ then 
$$\frac{  \sum_{Z_{i}\in Supp(\phi)} D_{Z_{i}}(x,\p(x)) +  2\cdot \big( \sum _{i}  \log i(x,Z_{i})  \big)}{\x(S)\cdot \U_{k+1} } \leq  \log i(x,\p(x)).$$
%where $d_{Z_{i}}(x,\p(x))>k\geq14$ for all $Z_{i}\in Supp(\phi).$ 
\end{theorem}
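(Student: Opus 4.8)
The plan is to reduce everything to the per-subsurface estimates already proved, namely Lemma \ref{base} for non-annuli and Lemma \ref{annulus} for annuli, applied to each element of $Supp(\phi)$ separately and then summed. The crucial structural simplification, compared with Theorem \ref{mainnonannulus}, is that the supports of a pure mapping class are pairwise disjoint, so no Behrstock-type interaction (Theorem \ref{BI}) and no partition into overlapping families is needed: each support may be estimated on its own, and we only pay a multiplicative factor equal to the number of supports. Fix $y=\phi^{n}(x)$. Since $n$ is raised so that $d_{Z_{i}}(x,\phi^{n}(x))>k\geq 14$, every $Z_{i}\in Supp(\phi)$ lies in $\Z(x,\phi^{n}(x),k)$, so each lemma applies with its cut-off satisfied (the annular case requires only $\geq 6$). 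Writing both conclusions uniformly in the $D_{Z_{i}}$ notation and using $\au_{k+1}\leq\U_{k+1}$ to replace $\au_{k+1}$ by $\U_{k+1}$ in the annular bound, I obtain for every $Z_{i}\in Supp(\phi)$
$$\frac{D_{Z_{i}}(x,\phi^{n}(x))}{\U_{k+1}}+\log i(x,Z_{i})+\log i(Z_{i},\phi^{n}(x))\leq\log i(x,\phi^{n}(x)).$$

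The conceptual crux is the observation that the two intersection terms coincide. Because $\phi$ is pure and supported on $\bigcup_{i}Z_{i}$, it fixes each boundary multicurve $\partial Z_{i}$ (componentwise, without permutation), so $\phi^{n}(\partial Z_{i})=\partial Z_{i}$ and hence
$$i(Z_{i},\phi^{n}(x))=i(\partial Z_{i},\phi^{n}(x))=i(\phi^{-n}(\partial Z_{i}),x)=i(\partial Z_{i},x)=i(x,Z_{i}),$$
using isotopy-invariance of geometric intersection number. Substituting this into the displayed inequality gives, for each $i$,
$$\frac{D_{Z_{i}}(x,\phi^{n}(x))}{\U_{k+1}}+2\log i(x,Z_{i})\leq\log i(x,\phi^{n}(x)),$$
which is precisely where the factor $2$ on the intersection sum in the statement originates; this step genuinely uses purity.

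Finally I would sum over $Z_{i}\in Supp(\phi)$. The supports being disjoint essential subsurfaces, their number is at most $\xi(S)$: completing $\bigcup_{i}\partial Z_{i}$ to a pants decomposition charges each annular support to one pants curve and each pseudo-Anosov support (which has complexity $\geq 1$) to at least one interior pants curve, all of these curves being distinct. Summing the per-$i$ inequality over the at most $\xi(S)$ supports and dividing by $\xi(S)$ yields
$$\frac{\sum_{i}D_{Z_{i}}(x,\phi^{n}(x))}{\xi(S)\cdot\U_{k+1}}+\frac{2\sum_{i}\log i(x,Z_{i})}{\xi(S)}\leq\log i(x,\phi^{n}(x)).$$
Since $\U_{k+1}>1$, the second term on the left dominates $\frac{2\sum_{i}\log i(x,Z_{i})}{\xi(S)\cdot\U_{k+1}}$, and collecting the two fractions over the common denominator $\xi(S)\cdot\U_{k+1}$ produces exactly the claimed bound. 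The only non-routine ingredients are the boundary-invariance identity and the support count $|Supp(\phi)|\leq\xi(S)$; everything else is assembling Lemma \ref{base} and Lemma \ref{annulus}. I expect the support-counting bound to be the step most deserving of care, although it is a standard consequence of the Thurston canonical form for pure mapping classes.
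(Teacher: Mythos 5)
Your proposal is correct and takes essentially the same route as the paper's proof: both derive the per-support inequality from Lemma \ref{annulus} and Lemma \ref{base} (using $\au_{k+1}\leq \U_{k+1}$ to unify the annular case), both use purity in exactly the same way to identify $i(Z_{i},\p(x))=i(x,Z_{i})$, and both divide by the support count $|Supp(\phi)|\leq \x(S)$. The only difference is cosmetic packaging of the final assembly: the paper sums the arc-pair intersection counts over the disjoint supports and applies AM--GM, whereas you average the finished per-support inequalities and absorb the mismatched factor via $\U_{k+1}>1$, which yields the identical constant $\x(S)\cdot\U_{k+1}$.
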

\begin{proof}
%We raise $n$ so that $d_{Z_{i}}(x,\p(x))>k$ for all $Z_{i}\in Supp(\phi).$ 
By the argument given in Lemma \ref{annulus} and Lemma \ref{base}, with the fact that $\au_{k+1}\leq \U_{k+1}$, we have $$\frac{D_{Z_{i}}(x,y)}{\U_{k+1}}\leq \log i_{Z_{i}}(a,b)$$% so $$ 2^{ \frac{D_{Z_{i}}(x,y)}{4U_{S,k}}   } \leq  i_{Z_{i}}(a,b)$$
 for all $a\in \{x\cap Z_{i}\}$ and $b\in \{ \p(x)\cap Z_{i}\}.$ 
Also, since $$\sum_{Z_{i}\in Supp(\phi)} \bigg( \sum_{a\in  \{x \cap Z_{i}\}, b\in  \{\p(x) \cap Z_{i}\}}i_{Z_{i}}(a,b) \bigg)\leq i(x,\p(x))$$
%we have $$\sum_{Z_{i}\in Supp(\phi)} \bigg( 2^{ \frac{D_{Z_{i}}(x,y)}{\U_{k+1}}   } \cdot i(x, Z_{i})\cdot i(\p(x), Z_{i})\bigg )\leq i(x,\p(x)).$$ 
and $i(x, Z_{i})=i(\p(x), \p(Z_{i}))= i(\p(x), Z_{i})$, with AM-GM inequality, we have $$\frac{  \sum_{Z_{i}\in Supp(\phi)} D_{Z_{i}}(x,\p(x)) +  2\cdot \big( \sum _{i}  \log i(x,Z_{i})  \big)}{ \x(S)\cdot \U_{k+1} } \leq \log i(x,\p(x)).$$
\end{proof}

\begin{theorem}
Let $x \in C(S).$ Let $\phi$ be a pure mapping class. If $n$ is raised so that $d_{Z_{i}}(x,\p(x))>k\geq \max\{M_{\phi}, \cut+4\}$ for all $Z_{i}\in Supp(\phi),$ then $$\frac{ \log i(x,\p(x)) }{V_{S,k}}-1\leq  \sum_{Z_{i}\in Supp(\phi)} D_{Z_{i}}(x,\p(x))+  \frac{2k\cdot U_{S,k-4}}{k-4} \cdot  \bigg(\sum _{i}  \log i(x,Z_{i}) \bigg)+ \frac{4k\cdot \xi(S)}{k-4}.$$
\end{theorem}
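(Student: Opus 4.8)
The plan is to bound $\log i(x,\p(x))$ from above by first invoking the left-hand inequality of Theorem \ref{th},
$$\frac{\log i(x,\p(x))}{V_{S,k}}-1\leq \S(x,\p(x),k)=\sum_{W:\,d_{W}(x,\p(x))>k}D_{W}(x,\p(x)),$$
and then estimating the right-hand sum by splitting the contributing subsurfaces $W$ according to how they sit relative to the support system $Supp(\phi)=\{Z_{i}\}$. The supports themselves contribute exactly $\sum_{Z_{i}\in Supp(\phi)}D_{Z_{i}}(x,\p(x))$, the first term of the desired bound; all of them occur since $d_{Z_{i}}(x,\p(x))>k$ by hypothesis. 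The remaining work is to charge every \emph{non-support} $W$ with $d_{W}(x,\p(x))>k$ to some $Z_{i}$ through an intersection term $\log i(x,Z_{i})$.

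First I would dispose of the non-support subsurfaces $W$ whose boundary is disjoint from every support; for such $W$ each $Z_{i}$ is either contained in or disjoint from $W$. If $W$ is disjoint from all supports then $\p$ is the identity near $W$, so $\pi_{W}(x)=\pi_{W}(\p(x))$ and $d_{W}(x,\p(x))=0$. Otherwise $W$ properly contains some supports without being one (the case $W=S$ included), so $\p(W)=W$ and $\p|_{W}$ is reducible; by Minsky \cite{MIN} (cf. Section \ref{five}) such a mapping class has uniformly bounded orbits in $C(W)$, and the same holds, now by the stated form of $M_{\phi}$, when $W\subsetneq Z_{i}$. Enlarging $M_{\phi}$ to absorb these (all instances of the fact that a non-pseudo-Anosov class has bounded curve-graph translation), the hypothesis $k\geq M_{\phi}$ rules out all of these $W$. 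Hence every contributing non-support $W$ has $\partial W$ crossing a support, i.e. $W\pitchfork Z_{i}$ for some $i$, and I assign each such $W$ to one such $Z_{i}$.

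For a fixed $Z_{i}$ and an overlapping $W$ assigned to it, I would exploit that $d_{Z_{i}}(x,\p(x))>k$ is large together with Theorem \ref{BI}: since $d_{Z_{i}}(\partial W,x)+d_{Z_{i}}(\partial W,\p(x))\geq d_{Z_{i}}(x,\p(x))>k>9$, one of the two terms exceeds $9$, whence $d_{W}(\partial Z_{i},x)\leq 4$ or $d_{W}(\partial Z_{i},\p(x))\leq 4$. The triangle inequality then gives $d_{W}(x,\p(x))\leq d_{W}(x,\partial Z_{i})+4$ or $d_{W}(x,\p(x))\leq d_{W}(\partial Z_{i},\p(x))+4$, and since the left side exceeds $k$ the surviving projection exceeds $k-4$; converting the additive $4$ into the multiplicative factor $\frac{k}{k-4}$ through the cut-off (with the annular/non-annular bookkeeping encoded in $D_{W}$, exactly as in Lemmas \ref{annulus} and \ref{base}) yields a bound of $D_{W}(x,\p(x))$ by $\frac{k}{k-4}$ times the surviving $(k-4)$-truncated term. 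Summing over the $W$ assigned to $Z_{i}$, and using that $\p$ fixes $\partial Z_{i}$ so that $\S(\partial Z_{i},\p(x),k-4)=\S(x,\partial Z_{i},k-4)$ by the $\p$-equivariance of subsurface projection, the total is at most $\frac{2k}{k-4}\,\S(x,\partial Z_{i},k-4)$. The right-hand inequality of Theorem \ref{th} at parameter $k-4$ (legitimate since $k-4\geq\cut$) bounds this by $\frac{2k}{k-4}\big(U_{S,k-4}\log i(x,\partial Z_{i})+2\big)$, and $i(x,\partial Z_{i})=i(x,Z_{i})$.

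Finally I would sum over the supports. There are at most $\xi(S)$ of them, so the non-support contribution is at most $\frac{2k\,U_{S,k-4}}{k-4}\sum_{i}\log i(x,Z_{i})+\frac{4k\,\xi(S)}{k-4}$; adding the support term and combining with the first display completes the estimate. I expect the main obstacle to be the classification in the second paragraph: one must verify that a non-support subsurface carrying a large projection genuinely overlaps some support, equivalently that all the reducible configurations ($W$ nested in, disjoint from, or properly containing the supports) really do have projection bounded by $M_{\phi}$. This is exactly what permits the clean Behrstock estimate, with its additive constant $4$, to govern every remaining subsurface and to produce the cut-off $k-4$ rather than a weaker one.
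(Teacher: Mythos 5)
Your proposal reproduces the paper's proof essentially step for step: the same opening with the left-hand inequality of Theorem \ref{th}, the same split of $\S(x,\p(x),k)$ into the support terms and the non-support subsurfaces charged to an overlapping $Z_{i}$, the same use of Theorem \ref{BI} with its additive constant $4$ and the $\frac{k}{k-4}$ cut-off conversion, the same application of the right-hand inequality of Theorem \ref{th} at parameter $k-4$ together with the equivariance identity $i(Z_{i},\p(x))=i(x,Z_{i})$, and the same final count of at most $\xi(S)$ supports. The one point where you elaborate beyond the paper --- excluding the subsurfaces $W$ that properly contain supports (or $W=S$) by claiming Minsky gives \emph{uniformly} bounded orbits for reducible classes --- overstates what is available (such a class only coarsely fixes $\partial Z_{i}$, giving $d_{W}(x,\p(x))\leq 2\,d_{W}(x,\partial Z_{i})$, which is bounded in $n$ but not uniformly in $x$, so these $W$ cannot simply be absorbed into $M_{\phi}$ as the paper defines it); the paper is itself silent on this case, and the harmless repair is to note that for such $W$ the curve $\partial Z_{i}$ projects to $W$ directly, so the very same triangle-inequality estimate charges $D_{W}(x,\p(x))$ to $\S(x,Z_{i},k-4)+\S(Z_{i},\p(x),k-4)$ without any appeal to Theorem \ref{BI}.
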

\begin{proof}
If $W\subseteq  S\setminus Supp(\phi)$ then $d_{W}(x,\p(x))\leq 3.$ Since $k\geq M_{\phi}$, if $W\in  \{\Z(x,\p(x),k)\setminus Supp(\phi)\}$ then $W$ overlaps with some $Z_{i}\in Supp(\phi).$ 
Let 
\begin{itemize}
\item $Supp(\phi)^{c}$ denote $\{\Z(x,\p(x),k)\setminus Supp(\phi)\}.$ 
\item $Supp(\phi)^{c}_{i}$ denote $\{W\in Supp(\phi)^{c} | W \text{ overlaps with }Z_{i} \}.$ 
\end{itemize}
If $W\in Supp(\phi)^{c}_{i}$, then $d_{Z_{i}}(x,W)>9$ or $d_{Z_{i}}(W,\p(x))>9$ since $d_{Z_{i}}(x,\p(x))\leq d_{Z_{i}}(x,W)+d_{Z_{i}}(W,\p(x))$. Hence we have $d_{W}(x,Z_{i})\leq 4$ or $d_{W}(Z_{i},\p(x))\leq 4$ by Theorem \ref{BI}. Since $d_{W}(x,\p(x))\leq d_{W}(x,Z_{i})+d_{W}(Z_{i},\p(x))$, we have 
\begin{eqnarray*}
\sum_{W\in Supp(\phi)^{c}_{i}} D_{W}(x,\p(x)) 
&\leq & \frac{k}{k-4}\cdot \bigg( \sum_{W\in Supp(\phi)^{c}_{i}}  \big( D_{W}(x,Z_{i}) +D_{W}(Z_{i},\p(x))  \big)  \bigg)
\\&\leq & \frac{k}{k-4}\cdot \bigg( \S(x,Z_{i}, k-4) +\S(Z_{i},\p(x),k-4)  \big)  \bigg)
\\&\leq& \frac{k\cdot U_{S,k-4}}{k-4} \cdot \bigg( \log i(x,Z_{i})+\log i(Z_{i},\p(x)) \bigg)+\frac{4k}{k-4}
\\&=& \frac{2k\cdot U_{S,k-4}}{k-4} \cdot \log i(x,Z_{i})+\frac{4k}{k-4}.
\end{eqnarray*}
We have  
\begin{eqnarray*}
\frac{\log i(x,\p(x))}{V_{S,k}}-1
&\leq &\S(x,\p(x),k)
\\&= & \sum_{Z_{i}\in Supp(\phi)} D_{Z_{i}}(x,\p(x))+   \sum_{W\in Supp(\phi)^{c}} D_{W}(x,\p(x))  
\\&\leq & \sum_{Z_{i}\in Supp(\phi)} D_{Z_{i}}(x,\p(x))+ \sum_{i} \bigg( \sum_{W\in Supp(\phi)^{c}_{i}} D_{W}(x,\p(x))  \bigg)
\\&\leq & \sum_{Z_{i}\in Supp(\phi)} D_{Z_{i}}(x,\p(x))+  \frac{2k\cdot U_{S,k-4}}{k-4} \cdot  \bigg(\sum _{i}  \log i(x,Z_{i}) \bigg)+ \frac{4k\cdot \xi(S)}{k-4}.
\end{eqnarray*}                       
\end{proof}

\bibliographystyle{plain}
\bibliography{references.bib}
%\textit{E-mail adress} :  \verb=ywatanab@math.utah.edu=

\end{document}